\def\@tocline#1#2#3#4#5#6#7{\relax
  \ifnum #1>\c@tocdepth % then omit
  \else
    \par \addpenalty\@secpenalty\addvspace{#2}%
    \begingroup \hyphenpenalty\@M
    \@ifempty{#4}{%
      \@tempdima\csname r@tocindent\number#1\endcsname\relax
    }{%
      \@tempdima#4\relax
    }%
    \parindent\z@ \leftskip#3\relax \advance\leftskip\@tempdima\relax
    \rightskip\@pnumwidth plus4em \parfillskip-\@pnumwidth
    #5\leavevmode\hskip-\@tempdima
      \ifcase #1
       \or\or \hskip 1em \or \hskip 2em \else \hskip 3em \fi%
      #6\nobreak\relax
      \dotfill
      \hbox to\@pnumwidth{\@tocpagenum{#7}}
    \par
    \nobreak
    \endgroup
  \fi}
\newcommand{\conl}{\Cl[c]}
\newcommand{\conr}{\Cr}
\newtheorem{theorem}{Theorem}[section]
\newtheorem{lemma}[theorem]{Lemma}
\newtheorem{proposition}[theorem]{Proposition}
\newtheorem{corollary}[theorem]{Corollary}
\theoremstyle{definition}
\newtheorem{definition}[theorem]{Definition}
\theoremstyle{remark}
\newcommand{\N}{{\mathbb N}}
\newcommand{\R}{{\mathbb R}}
\newcommand{\tr}{\mathrm{tr}^*}
\newcommand{\beqn}{\begin{eqnarray}}
\newcommand{\eeqn}{\end{eqnarray}}   % equation with number
\newcommand{\beq}{\begin{eqnarray*}}
\newcommand{\eeq}{\end{eqnarray*}}
\newcommand{\be}{\begin{equation}}
\newcommand{\bel}[1]{\begin{equation}\label{#1}}
\newcommand{\ee}{\end{equation}}%% This macro does not work with amstex.
\newcommand{\BA}{\begin{array}}
\newcommand{\EA}{\end{array}}
\newcommand{\BAN}{\renewcommand{\arraystretch}{1.2}
\setlength{\arraycolsep}{2pt}\begin{array}}
\newcommand{\BAV}[2]{\renewcommand{\arraystretch}{#1}
\setlength{\arraycolsep}{#2}\begin{array}}
\newcommand{\BSA}{\begin{subarray}}
\newcommand{\ESA}{\end{subarray}}
\newcommand{\BAL}{\begin{aligned}}
\newcommand{\EAL}{\end{aligned}}
\newcommand{\forevery}{\quad \forall}
\newcommand{\norm}[1]{\left \|#1\right \|}%% adjustable norm
\newcommand{\dist}{\mathrm{dist}}
\newcommand{\sign}{\mathrm{sign}}
\newcommand{\prt}{\partial}
\newcommand{\sbs}{\subset}
\def\bcom{\begin{comment}}
\def\ecom{\end{comment}}
   \def\CB{{\mathcal B}}   
   \def\CH{{\mathcal H}}   
\def\CJ{{\mathcal J}}      \def\CL{{\mathcal L}}
\def\BBG {\mathbb G}   \def\BBH {\mathbb H}    
   \def\BBK {\mathbb K}    
   \def\BBN {\mathbb N}    
\def\BBP {\mathbb P}   \def\BBR {\mathbb R}
\def\GTM {\mathfrak M}
\def\tr{\mathrm{tr}_{\mu}}
\def\div{\mathrm{div}}
\newcommand{\ei}{\phi_{\xm}}
\newcommand{\xa}{\alpha}
\newcommand{\xb}{\beta}
\newcommand{\xg}{\gamma}
\newcommand{\xG}{\Gamma}
\newcommand{\xd}{\delta}
\newcommand{\xD}{\Delta}
\newcommand{\xe}{\varepsilon}
\newcommand{\xz}{\zeta}
\newcommand{\xl}{\lambda}
\newcommand{\xL}{\Lambda}
\newcommand{\xm}{\mu}
\newcommand{\xn}{\nu}
\newcommand{\xr}{\rho}
\newcommand{\xs}{\sigma}
\newcommand{\xf}{\phi}
\newcommand{\xF}{\Phi}
\newcommand{\xo}{\omega}
\newcommand{\xO}{\Omega}
\newcommand{\myfrac}[2]{{\displaystyle \frac{#1}{#2} }}
\newcommand{\myint}[2]{{\displaystyle \int_{#1}^{#2}}}
\newcommand{\ap}{{\xa_{\scaleto{+}{3pt}}}}
\newcommand{\xgp}{{\xg_{+}}}
\newcommand{\xgn}{{\xg_{-}}}
\def\bal#1\eal{\small\begin{align*}#1\end{align*}\normalsize}
\def\ba#1\ea{\small\begin{align}#1\end{align}\normalsize}
\numberwithin{equation}{section}
\newcommand{\minsk}{\vspace{0.2cm}}
\newcommand{\medsk}{\vspace{0.4cm}}
\newcommand{\inprod}[2]{{\langle{#1},{#2}\rangle}}
\newcommand{\trti}{ \widetilde{ \rm tr}}
 \newcommand{\tarr}[6]{{\left\{\begin{array}{lll}
   {#1}&{#2}\\
   {#3}&{#4}\\
   {#5}&{#6}
\end{array}\right.}}
\newcommand{\ia}{({\rm i})}
\newcommand{\ib}{({\rm ii})}
\newcommand{\ic}{({\rm iii})}
\begin{document}

\makeatletter
\def\author@andify{%
  \nxandlist {\unskip ,\penalty-1 \space\ignorespaces}%
    {\unskip {} \@@and~}%
    {\unskip \penalty-2 \space \@@and~}%
}
\makeatother
\title[]{Heat and Martin kernel estimates for Schr\"{o}dinger operators
with critical Hardy potentials}

\author[G. Barbatis]{G. Barbatis}
\address{G. Barbatis, Department of Mathematics, National and Kapodistrian University of Athens, 15784 Athens, Greece}
\email{gbarbatis@math.uoa.gr}

\author[K. T. Gkikas]{K.T. Gkikas}
\address{K.T. Gkikas, Department of Mathematics, National and Kapodistrian University of Athens, 15784 Athens, Greece}
\email{kugkikas@math.uoa.gr}

\author[A. Tertikas]{A. Tertikas}
\address{A. Tertikas, Department of Mathematics \& Applied Mathematics,
University of Crete, 70013 Heraklion, Greece and
Institute of Applied and Computational Mathematics, FORTH, 71110 Heraklion, Greece}
\email{tertikas@uoc.gr}

\date{\today}

\begin{abstract}
Let $\Omega$ be a bounded domain in $\R^N$ with $C^2$ boundary
and let $K\subset\partial\Omega$ be either a $C^2$ submanifold of
the boundary of codimension $k<N$ or a point.
In this article we study various problems related to the Schr\"odinger
operator $L_{\mu} =-\Delta - \mu d_K^{-2}$
where $d_K$ denotes the distance to $K$ and $\mu\leq k^2/4$.
We establish parabolic boundary Harnack inequalities as well as
related two-sided heat kernel and Green function estimates.
We construct the associated Martin kernel and prove existence and
uniqueness for the corresponding boundary value problem
with data given by measures. Next we apply the results to the
study of $L_\xm u+g(u) = 0$ and establish existence
and uniqueness under suitable assumptions on the function $g$.
To prove our results we introduce among other things
a suitable notion of boundary trace. This trace is different
from the one used by Marcus  and Nguyen \cite{MT} thus
allowing us to cover the whole range $\mu\leq k^2/4$.

\medskip

\noindent\textit{Keywords: heat kernel; Martin kernel;
Green function; Hardy potential; boundary trace
}

\noindent\textit{Mathematics
Subject Classification: Primary 35K08; 35J25; 35J75; 35J08. Secondary 46E35; 31C35}

\end{abstract}

\maketitle

\tableofcontents

\section{Introduction}

The study of linear Schr\"odinger operators with singular potentials is
central in the  theory of parabolic and elliptic partial differential equations.
In recent years in particular there has been an intense study of operators with Hardy potentials, see e.g.
\cite{BM,hardy-marcus,BMS,FT,BFT,DD2,GM,C,FF,BEL,BFT3}.

Throughout this work we assume that $\Omega$ is a bounded $C^2$ domain;
we note however that some of the results presented in this introduction
are valid under weaker regularity assumptions.

Consider the problem
\be
\left\{
\begin{array}{ll}
 u_t =\Delta u + V(x) u, &  x\in\Omega , \; t>0, \\
u=0 ,  & x \in \partial\xO , \; t>0, \\
 u(0,x)=u_0(x) , &  x\in \xO \, ,
\end{array}
\right.
\label{111}
\ee
where $V\in L^1_{\rm loc}(\Omega)$ and set
\[
\xl^* =\inf_{ C^{\infty}_c(\Omega) }\frac{\int_\xO|\nabla w|^2dx-\int_\xO
V w^2 dx}{\int_\xO w^2 dx}.
\]
Cabr\'e and Martel \cite{CM} have established that if $\lambda^*>-\infty$
then for regular enough initial data there exists a global in time weak solution of \eqref{111} which in addition satisfies an exponential in time
bound. Conversely, the existence of a weak solution which satisfies an
exponential bound implies that $\lambda^*> -\infty$.
In the prototype case of the Hardy potential $V(x)=c|x|^{-2}$ this
has already been studied by Baras and Goldstein \cite{BG}.

Given the existence of a weak solution one natural question is the existence and asymptotic behaviour of the heat kernel and Green function.
If the potential is not too singular then the asymptotic behaviour of the
heat kernel for small time is the same as that of the Laplacian, namely
\begin{eqnarray*}
&& C^{-1}\Big( \frac{d(x)d(y)}{ (d(x)+\sqrt{t})(d(y)+\sqrt{t})}\Big)
t^{-\frac{N}{2}}\exp\Big(-C\frac{|x-y|^2}{t}\Big)\\
& &\hspace{-1.2cm}\leq h(t,x,y)\leq
 C\Big(\frac{d(x)d(y)}{(d(x)+\sqrt{t})(d(y)+\sqrt{t})}\Big)
t^{-\frac{N}{2}}\exp\Big(-C^{-1}\frac{|x-y|^2}{t}\Big),
\end{eqnarray*}
where $d(x)={\rm dist}(x,\partial\Omega)$ denotes the distance to the boundary, see e.g. \cite{Zhang}.

In the case of a more singular potential such as a Hardy
potential, the problem has been studied in
\cite{DS1,D2,VZ,MS,LS,BFT2,MoT,MoT2,FMT2,FMoT}.

A distinction that plays an important role in this context is whether
the singularity of the Hardy potential occurs in the interior or
on the boundary of the domain.
For the potential $\mu |x|^{-2}$, $0\leq \mu\leq (\frac{N-2}{2})^2$,
where $0\in\Omega$, for small time we have
\begin{eqnarray*}
&& C^{-1}\Big( \frac{d(x)d(y)}{ (d(x)+\sqrt{t})(d(y)+\sqrt{t})}\Big)
\Big( \frac{ |x| \; |y| }{ (|x|+\sqrt{t})(|y|+\sqrt{t})}\Big)^{\theta_+}
t^{-\frac{N}{2}}\exp\Big(-C\frac{|x-y|^2}{t}\Big)\\
&\leq  & h(t,x,y) \\
&\leq & C\Big(\frac{d(x)d(y)}{(d(x)+\sqrt{t})(d(y)+\sqrt{t})}\Big)
 \Big( \frac{ |x| \; |y| }{ (|x|+\sqrt{t})(|y|+\sqrt{t})}\Big)^{\theta_+}
t^{-\frac{N}{2}}\exp\Big(-C^{-1}\frac{|x-y|^2}{t}\Big) ,
\end{eqnarray*}
where $\theta_+$ is the largest solution to the equation
$\theta^2 +(N-2)\theta + \mu=0$; see \cite{FMT2}.
This estimate was generalized in \cite{gktai} in case where
the distance is taken
from a closed surface $\Sigma\subset\Omega$
of codimension $k$, $2\leq k\leq N$; see also
\cite{GkT1,GkT2} for more results within this framework.

On the other hand, when the distance is taken
from the boundary $\partial\Omega$ the following
small time estimate is valid for the
heat kernel of the operator $-\Delta -\mu d(x)^{-2}$,
$0\leq\mu\leq \frac{1}{4}$,
\begin{eqnarray*}
&& C^{-1}\Big( \frac{d(x)d(y)}{ (d(x)+\sqrt{t})(d(y)+
\sqrt{t})}\Big)^{1+\theta_+}
t^{-\frac{N}{2}}\exp\Big(-C\frac{|x-y|^2}{t}\Big)\\
&\leq  & h(t,x,y) \;
\leq  C\Big(\frac{d(x)d(y)}{(d(x)+\sqrt{t})(d(y)+\sqrt{t})}\Big)^{1+
\theta_+}
t^{-\frac{N}{2}}\exp\Big(-C^{-1}\frac{|x-y|^2}{t}\Big) ,
\end{eqnarray*}
where $\theta_+$ is the largest solution to the equation
$\theta^2 +\theta +\mu=0$, see  \cite{FMT2,FMoT}.

Another function that is important in the study of this type of problems
is the Martin kernel \cite{An,hunt,Mar}.
Ancona proved the existence of the Martin kernel $ K_{\mu,\partial\xO}(x,y)$ of $L_\mu^{\partial\xO}=-\Delta -\frac{\mu}{d^2}$, $\mu<\frac{1}{4}$,
with pole at $y$,  which is unique up to a normalization (see \cite[Theorem 3]{An}). He showed that for any positive solution $u$ of $L_\mu^{\partial\xO} u = 0$ there exists a unique nonnegative Radon measure $\xn$ on $\partial\xO$ such that
\ba\label{repr1}
u(x)=\int_{\partial \Omega} K_{\mu,\partial\xO}(x,y)d\xn(y).
\ea
The case $\xm=\frac{1}{4}$ was treated by Gkikas and V\'eron in \cite{GkV}. In particular, they showed that the representation formula \eqref{repr1} holds true provided the bottom of the spectrum of  $L_\mu^{\partial\xO}$
is positive.

When $K\subset \xO$ is a closed smooth surface of codimension
$k\in \{ 3 ,\ldots, N\}$, analogous results where obtained in
 \cite{gktai} for the operator
 $L_\mu^{K}=-\Delta -\frac{\mu}{d_K^2}$, $\mu\leq\frac{(k-2)^2 }{4}$,
under the assumption that  the bottom of the spectrum of
$L_\mu^{K}$ is positive.

Our aim in this article is to study such problems in the case where the
Hardy potential involves the distance to a smooth submanifold of the boundary, including the case of a boundary point.
In this direction:

$\bullet$ We establish parabolic boundary Harnack inequalities as well as
related two-sided
heat kernel estimates. For small time, our approach is based on the
ideas of Grigoryan and Saloff–Coste \cite{GS} (see also \cite{SC1}), while for
large time, we exploit the work of Davies in \cite{D1,D2} to obtain sharp-
two sided heat kernel estimates; see also \cite{FMT2,FMoT}.

$\bullet$ In the spirit of \cite{caffa,hunt} (see also \cite{gktai,GkV}), we
construct the Martin kernel of $L_\xm$ in $\xO$ and we prove the
uniqueness also in the critical case. Using the heat kernel estimates, we
obtain sharp pointwise estimates for the Green function as well as the
Martin kernel. We also  show that every nonnegative $L_\xm$-harmonic
function (i.e. solution of $L_\xm u = 0$ in $\xO$ in the sense of
distributions) can be represented as the integral
of the Martin kernel with respect to a finite measure on $\partial\xO$.

$\bullet$ Using the properties of the Green function
and Martin kernel we study the boundary value problem with data given by
measures. Following Marcus-V\'eron \cite{MVbook}
we prove existence, uniqueness as well as a representation formula for any
solution of this problem. As an application, we establish existence
and uniqueness results for
solutions of the equation $L_\xm u+g(u) = 0$ under suitable
assumptions on the function $g$.

\section{Main results}

Throughout this article we consider a bounded $C^2$ domain
$\xO\subset\mathbb{R}^N$, $N\geq3$
and a $C^2$ compact submanifold without boundary
$K \subset \partial\Omega$  of codimension $k$, $1\leq k \leq N$.
For the extreme cases $k=N$ and $k=1$ we
assume that  $K = \{0\}$  and $K=\partial\xO$ respectively.
We set $d_K(x)=\text{dist}(x,K)$ and define the operator
\[
L_\xm=-\xD-\frac{\xm}{d_K^2} \; , \qquad  \mbox{ in }\xO,
\]
where   $\xm$ is a parameter; we shall always assume that
$\mu\leq \frac{k^2}{4}$ so that $L_\xm$ is bounded from below.
The study of the parabolic equation $u_t+L_\xm u=0$
with Dirichlet boundary conditions is strongly related with the
minimization problem,
\bal
C_{\xO,K}=\inf_{u\in H_0^1(\xO)\setminus \{0\}}\frac{\int_\xO|\nabla u|^2dx}{\int_\xO\frac{|u|^2}{d_K^2}dx}.
\eal
It is well known that $0<C_{\xO,K}\leq\frac{k^2}{4}$ (see, e.g., \cite{FF}).

For $\mu\leq \frac{k^2}{4}$ the infimum
\ba
\label{eigenvalue}
\xl_\xm:=\inf_{u\in H_0^1(\xO)\setminus \{0\}}\frac{\int_\xO|\nabla u|^2dx-\xm\int_\xO\frac{u^2}{d_K^2}dx}{\int_\xO u^2 dx}
\ea
is finite. Moreover, if  $\xm< \frac{k^2}{4},$ then there exists a minimizer $\ei\in H_0^1(\xO)$ of \eqref{eigenvalue};
see \cite{FF} for more details.
In addition, by \cite[Lemma 2.2]{MT} the eigenfunction $\ei$ satisfies
\be
\xf_\xm(x) \asymp d(x)d_K^{\xg_+}(x),  \quad\quad \mbox{ in }\Omega \, ,
\label{eigenest}
\ee
provided $\xm<C_{\xO,K}$. Here and below we denote by $\xg_+$ (resp. $\xg_-$) the largest (resp.
the smallest) solution of the equation
$\gamma^2+k\gamma+\mu=0$.

On the other hand, if $\xm=\frac{k^2}{4}$ then there is no $H_0^1(\xO)$
minimizer.  However, there exists a function $\xf_{\mu}\in H^1_{loc}(\xO)$ such that $L_{\mu}
\xf_{\mu}=\xl_{\mu}\xf_{\mu}$ in $\xO$ in the sense of distributions.
In the Appendix we follow ideas of \cite{BMS,DD1,DD2,FMoT} and prove
that estimate \eqref{eigenest} is valid for any $\xm\leq \frac{k^2}{4}$.

\subsection{Heat kernel and boundary Harnack inequality}

Let $u\in C^1((0,\infty): C^2(\xO)),$ then by setting $u=e^{-\xl_\xm t}\ei v,$ we can easily see that
\begin{equation}
\label{st}
\frac{u_t+L_\xm u}{\ei}=v_t- \ei^{-2} \, \text{div}\left(\ei^2\nabla  v\right)=:v_t+\CL_{\mu}v.
\end{equation}
Hence, instead of studying the properties of the operator $L_\xm,$ it is more convenient to study the operator $\frac{\partial}{\partial t}+\CL_{\mu}.$ In this direction, we introduce the weighted Sobolev space $H^1(\xO;\ei^2)$.

\begin{definition}
Let $D\subset\xO$ be an open set. We denote by $H^1(D;\ei^2)$ the weighted Sobolev space
\bal
H^1(D;\ei^2):=\{u\in H^1_{loc}(D):\;|u|\ei+|\nabla u|\ei\in L^2(D)\}
\eal
endowed with the norm
\bal
\norm{u}^2_{H^1(D;\ei^2)}=\int_D u^2\ei^2 dx+\int_D|\nabla u|^2\ei^2 dx.
\eal
\end{definition}
\noindent
We also denote by $H^1_0(D;\ei^2)$ the closure of $C_c^\infty(D)$  in the norm $\norm{\cdot}_{H^1(D;\ei^2)}$.
It is worth mentioning here that $H^1_0(\xO;\ei^2)=H^1(\xO;\ei^2)$ (see Theorem \ref{density}).

Next, we normalize $\ei$ so that $\int _\xO\ei^2 dx=1$. We define the bilinear form
$Q: H_0^1(\xO ; \ei^2)\times H_0^1(\xO ;\ei^2)\to \R$  by
\bal
Q(u,v)=\int_\xO\nabla u \cdot \nabla v \, \ei^2 dx.
\eal
The associated operator is the operator $\CL_{\mu}$ defined
in \eqref{st} and generates a contraction semigroup
$T(t): L^2(\xO ; \ei^2)\to L^2(\xO ; \ei^2)$, $t\geq 0$, denoted also by
$e^{-\CL_{\mu}t}$. This semigroup is positivity preserving and by
\cite[Lemma 1.3.4]{D2} we can easily show that satisfies the conditions
of \cite[Theorems 1.3.2 and 1.3.3]{D2}. Using the logarithmic Sobolev
inequality (Theorem \ref{logth}) and some ideas of Davies
\cite{D1,D2}, we shall
show that $e^{-\CL_{\mu}t }$ is ultracontractive and therefore has
a kernel $k(t,x,y).$ More precisely, we prove
the following large time estimates:
\begin{theorem}
\label{weimaintheoremintro1}
Let $\xm\leq \frac{k^2}{4}$ and $T>0.$ Then there holds
\bal
k(t,x,y)\asymp1
\eal
for any $t\geq T$ and $x,y\in\xO$. The implicit constants
depend only on $\xO,K,\xm$ and $T.$
\end{theorem}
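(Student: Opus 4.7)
The plan is to exploit the fact that the transformation producing $\CL_{\xm}$ automatically makes the constant function $1$ a ground state. Since $\CL_{\xm}v = -\ei^{-2}\mathrm{div}(\ei^2\nabla v)$ we have $\CL_{\xm}1 = 0$, and by the normalization $\int_{\xO}\ei^2\,dx = 1$ the constant $1$ has unit $L^2(\xO;\ei^2)$-norm. Thus $1$ is the $L^2$-normalized ground state of the non-negative self-adjoint operator $\CL_{\xm}$, with simple eigenvalue $0$ (simplicity coming from irreducibility of the positivity-preserving semigroup). The ultracontractivity established just before the theorem via the logarithmic Sobolev inequality and Davies' machinery forces the resolvent of $\CL_{\xm}$ to be compact, so its spectrum is purely discrete: $0 = \xl_0 < \xl_1 \leq \xl_2 \leq \cdots$, with an orthonormal eigenbasis $\{1,\psi_n\}_{n \geq 1}$ of $L^2(\xO;\ei^2)$ and a strictly positive spectral gap $\xl_1 > 0$.

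Ultracontractivity also gives at once a uniform upper bound on the kernel: $k(t,x,y) \leq M(t/2)^2 \leq C(T)$ for $t \geq T$. For the approach of $k$ to $1$ I would use the spectral expansion
\[
k(t,x,y) - 1 = \sum_{n \geq 1} e^{-\xl_n t}\,\psi_n(x)\,\psi_n(y),
\]
apply Cauchy--Schwarz to get $|k(t,x,y) - 1|^2 \leq (k(t,x,x)-1)(k(t,y,y)-1)$, and factor the exponential decay on the diagonal:
\[
k(t,x,x) - 1 = \sum_{n \geq 1} e^{-\xl_n t}\psi_n(x)^2 \leq e^{-\xl_1(t - T/2)}\bigl(k(T/2,x,x) - 1\bigr) \leq C(T)\,e^{-\xl_1 t},
\]
the last inequality using the pointwise ultracontractivity bound on $k(T/2,x,x)$. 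This yields $|k(t,x,y) - 1| \leq C(T)\,e^{-\xl_1 t}$ for $t \geq T$, so that $k(t,x,y) \in [1/2, 3/2]$ for all $t$ beyond some explicit $T^* = T^*(T)$.

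The hard part will be the uniform lower bound on the compact time interval $T \leq t \leq T^*$, since when $T$ is small the spectral argument by itself gives no useful lower bound on this range. To handle it I would invoke the small-time heat kernel estimates proven earlier in the paper: via the identity $k(t,x,y) = e^{\xl_{\xm} t}h(t,x,y)/(\ei(x)\ei(y))$, the two-sided bounds on $h(t,x,y)$ involve a Gaussian factor of the form $t^{-N/2}e^{-C|x-y|^2/t}$ together with $d$- and $d_K$-dependent prefactors, all of which stay bounded above and below by positive constants once $t$ is confined to the compact subinterval $[T,T^*]$ of $(0,\infty)$ and $x,y$ range over the bounded domain $\xO$. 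Combining the two regimes yields $k(t,x,y) \asymp 1$ uniformly for $t \geq T$, with implicit constants depending only on $\xO$, $K$, $\xm$ and $T$, as required.
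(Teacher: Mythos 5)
Your proposal is correct and follows essentially the same route as the paper's first proof of the large-time estimate (Theorem \ref{weimaintheorem}(ii)). Both arguments hinge on the same three pillars: ultracontractivity from the weighted log-Sobolev inequality (Theorem \ref{logth}) giving the uniform upper bound $k(t,x,y)\leq C_t$; compactness of the resolvent of $\CL_\mu$ together with $\CL_\mu 1=0$ giving a spectral gap; and the small-time two-sided bounds of Theorem \ref{weimaintheorem}(i) covering the residual compact time window. The only stylistic difference is that you run the spectral-gap argument directly on the eigenfunction expansion with Cauchy--Schwarz and the diagonal-factorization trick $|k(t,x,y)-1|^2\leq(k(t,x,x)-1)(k(t,y,y)-1)$, whereas the paper works at the level of operator norms, introducing $\tilde T(t)f=e^{-\CL_\mu t}f-\langle f,1\rangle_{L^2(\Omega;\phi_\mu^2)}$ and interpolating $L^1\to L^2\to L^\infty$ bounds to conclude $|k(t,x,y)-1|\leq 2e^{-\lambda t/3}C_{t/3}^2$; these are equivalent routes to the same exponential decay. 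You might also note the paper offers a second, alternative proof of the lower bound based on the parabolic boundary Harnack inequality \eqref{harnackpar}, which avoids the spectral machinery entirely — but your spectral argument stands on its own and matches the paper's primary approach.
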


For small time the two-sided heat kernel estimate is different. A pivotal ingredient in the proof of this estimate is the boundary Harnack inequality. However, in order to state the boundary Harnack inequality, we first need to give the following definition of weak solution.

\begin{definition}
Let $D\subset\xO$ be an open set. We say that $v\in C^1((0,T):H^1(D  ; \ei^2  ))$ is a weak solution of $v_t+\CL_{\mu}v=0$ in $(0,T)\times D$ if for each $\xF
\in C^1_c((0,T):C_c^\infty(D)),$ we have
\bal
\int_{0}^{T}\int_{D}(v_t\xF+\nabla v\cdot
\nabla\xF ) \ei^2 \, dy \, dt=0.
\eal
\label{def1intro}
\end{definition}

\begin{theorem}[Boundary Harnack inequality]\label{Harnack}
Let $\mu\leq k^2/4$ and $v$ be a non-negative solution of $v_t+\CL_{\mu}v$ in $(0,r^2)\times \mathcal{B}(x,r)\cap\xO$. There exists $\beta_1>0$
 and a positive constant $C=C(\xO,K,\xb_1,\mu)$ such that
for all $r<\beta_1$ there holds
\be
\sup_{(\frac{r^2}{4},\frac{r^2}{2})\times\mathcal{B}(x,\frac{r}{2})\cap\xO}v\leq C
 \inf_{(\frac{3r^2}{4},r^2)\times\mathcal{B}(x,\frac{r}{2})\cap\xO} v.
\label{harnackpar}
\ee
\end{theorem}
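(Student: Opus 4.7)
The plan is to recognize the equation as a parabolic equation symmetric on the weighted Dirichlet space $(L^2(\Omega; \phi_\mu^2\,dx), Q)$ and to deduce the Harnack inequality from the Grigoryan--Saloff-Coste framework. Multiplying $v_t + \mathcal{L}_\mu v = 0$ by $\phi_\mu^2$ casts it into the divergence-form weighted equation
\[
\phi_\mu^2 v_t = \mathrm{div}(\phi_\mu^2 \nabla v),
\]
and in this setting the parabolic Harnack inequality on intrinsic balls is equivalent to the conjunction of (a) local volume doubling for the measure $d\sigma = \phi_\mu^2\,dx$ on $\Omega$, and (b) a scaled weighted $L^2$-Poincar\'e inequality on $\mathcal{B}(x,r)\cap\Omega$. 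My goal is therefore to verify (a) and (b) for all $r<\beta_1$ with constants depending only on $\Omega$, $K$ and $\mu$, and then to invoke Grigoryan--Saloff-Coste to conclude.

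I would first establish (a) using the two-sided estimate $\phi_\mu \asymp d\cdot d_K^{\gamma_+}$ (valid throughout $\mu \leq k^2/4$ by the Appendix). The only nontrivial regime is when the center $x$ lies close to $K$, where, in local tangential--normal coordinates $(y,z)\in\mathbb{R}^{N-k}\times\mathbb{R}^k$ (with $K = \{z=0\}$, $\partial\Omega = \{z_1=0\}$, and $\Omega = \{z_1>0\}$ locally), one has $d\asymp z_1$ and $d_K\asymp |z|$. An explicit integration then gives
\[
\int_{\mathcal{B}(x,r)\cap\Omega} \phi_\mu^2\,dy\,dz \asymp r^{N+2+2\gamma_+},
\]
since $k+2+2\gamma_+ = 2+\sqrt{k^2-4\mu}\geq 2$ throughout the admissible range; the doubling constant is thus uniform. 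For (b), away from $K$ the weight is comparable to $d^2$ and the Poincar\'e inequality follows from a classical one-dimensional Muckenhoupt argument in the direction normal to $\partial\Omega$. Near $K$ I would decompose via Fubini along $z_1$ and combine a one-dimensional Poincar\'e inequality on $(0,r)$ with weight $z_1^2$ together with a weighted Poincar\'e inequality on $\mathbb{R}^k$-balls for the radial weight $|z|^{2\gamma_+}$.

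The main obstacle is the critical case $\mu = k^2/4$, where $\gamma_+ = -k/2$ and $|z|^{2\gamma_+} = |z|^{-k}$ is not locally integrable on $\mathbb{R}^k$ and falls outside the Muckenhoupt $A_2$ class. The factor $z_1^2$ must be exploited in an essential way: the expected strategy is to apply a Hardy-type inequality across $\partial\Omega$ to trade a power of $z_1$ for improved integrability in $|z|$, so that the weighted Poincar\'e inequality for $\phi_\mu^2$ is recovered despite the borderline behavior of $d_K^{2\gamma_+}$ alone. Once (a) and (b) are established uniformly in $r<\beta_1$, a Moser iteration on the cylinders $(0,r^2)\times(\mathcal{B}(x,r)\cap\Omega)$---or, equivalently, the Grigoryan--Saloff-Coste parabolic Harnack theorem---yields \eqref{harnackpar}.
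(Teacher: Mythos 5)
Your plan follows the same overall strategy as the paper: establish volume doubling for $d\sigma=\phi_\mu^2\,dx$ on the distorted balls $\mathcal{B}(x,r)$, establish a scaled weighted Poincar\'e inequality on those balls, and run a Moser iteration \`a la Grigoryan--Saloff-Coste. Your identification of the critical-case obstacle (for $\mu=k^2/4$, $|z|^{2\gamma_+}=|z|^{-k}$ is not locally integrable, and the extra factor $z_1^2\asymp d^2$ must be exploited) is exactly the point, and your doubling computation $\int_{\mathcal{B}(x,r)\cap\Omega}\phi_\mu^2\asymp r^{N+2+2\gamma_+}$ matches Corollary \ref{doublingcoro}.

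However, there is one genuine gap in the argument as written. You treat the passage from ``doubling $+$ Poincar\'e'' to the parabolic Harnack inequality as a black-box invocation of the Grigoryan--Saloff-Coste equivalence, or alternatively as a routine Moser iteration on $\mathcal{B}(x,r)\cap\Omega$. But $\Omega$ is not complete and $\mathcal{B}(x,r)\cap\Omega$ has a genuine piece of boundary on $\partial\Omega$; the Dirichlet condition there would normally force test functions in the iteration to vanish on $\partial\Omega$, which would produce only an \emph{interior} Harnack inequality, not one that holds up to $\partial\Omega$ as asserted in \eqref{harnackpar}. What makes the boundary Harnack possible is the density result $H^1_0(\Omega;\phi_\mu^2)=H^1(\Omega;\phi_\mu^2)$ (Theorem \ref{density}): since $\phi_\mu^2\asymp d^2 d_K^{2\gamma_+}$ vanishes on $\partial\Omega$, smooth compactly supported functions in $\Omega$ are dense in the full weighted Sobolev space, so one may use cut-off functions in $C_c^\infty(\mathcal{B}(x,r))$ rather than $C_c^\infty(\mathcal{B}(x,r)\cap\Omega)$, and the boundary $\partial\Omega$ becomes invisible in the Moser iteration. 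This is precisely the ingredient singled out in the paper as what enables the boundary Harnack inequality, and it is not a routine observation (it requires the $\log$-cutoff argument of Lemma \ref{densityflat} together with the careful local coordinate reduction). Your proof proposal does not mention it, and without it the appeal to Grigoryan--Saloff-Coste is not justified. A secondary, smaller omission is that the Moser iteration also needs a weighted Sobolev/Moser inequality (Theorem \ref{mosersx}), not merely the Poincar\'e inequality, although that can be deduced from doubling together with the weighted $L^1$-Sobolev inequalities of Section 3.
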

\noindent Here $\mathcal{B}(x,r)$ are suitably defined ``balls"
(see Definition \ref{beta1}).
Let us briefly explain the proof of the above theorem. We first
prove the doubling property for the ``balls" $\mathcal{B}(x,r)$
(Lemma \ref{doubling}), the Poincar\'e inequality (Theorem \ref{thm:3.9}) and the Moser inequality (Theorem \ref{mosersx}). The last three results along with the density Theorem \ref{density} allow us to apply a Moser iteration argument similar to the one in \cite{GS,SC1} so that we reach the desired result. Due to the fact that $K\subset \partial\xO,$ the proof of the above theorem is more complicated than the one in \cite{FMT2,FMoT} and new essential difficulties arise which should be handled in a very delicate way.

Proceeding as in the proof of \cite[Theorem 5.4.12]{SC1}, we may deduce that the boundary Harnack inequality \eqref{harnackpar} implies the following sharp two-sided heat kernel estimate for small time.
\begin{theorem}
\label{weimaintheoremintro2}
Let $\xm\leq \frac{k^2}{4}$ and $T>0$. Then there exists $C>1$
which depends on $\Omega,K, \mu$ and $T$ such that
\begin{align*}
&C^{-1} \left((d(x)+\sqrt{t})(d(y)+\sqrt{t})\right)^{-1}\left((d_K(x)+\sqrt{t})(d_K(y)+\sqrt{t})\right)^{-\xg_+}
t^{-\frac{N}{2}}\exp\Big(-C\frac{|x-y|^2}{t}\Big) \\
&  \leq \,  k(t,x,y)  \\
& \leq C \!\left((d(x)+\sqrt{t})(d(y)+\sqrt{t})\right)^{-1}\!\!
\left((d_K(x)+\sqrt{t})(d_K(y)+\sqrt{t})\right)^{-\xg_+}\!\!
t^{-\frac{N}{2}}\exp\!\Big(\!-C^{-1}\frac{|x-y|^2}{t}\Big)
\end{align*}
for any $0<t\leq T$ and $x,y\in\xO$.
\end{theorem}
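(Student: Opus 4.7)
The plan is to deduce the small-time bounds for $k(t,x,y)$ from the parabolic boundary Harnack inequality (Theorem~\ref{Harnack}) via the Grigoryan--Saloff-Coste scheme, following \cite[Theorem 5.4.12]{SC1}. Recall that $k(t,x,y)$ is the integral kernel of the semigroup associated with the symmetric Dirichlet form $Q$ on $L^2(\xO;\ei^2)$, so the analysis takes place on the weighted space $(\xO,|\cdot|,\ei^2 dx)$ equipped with the adapted balls $\mathcal{B}(x,r)$. The first step is to compute the weighted volume $V(x,r) := \int_{\mathcal{B}(x,r)\cap\xO} \ei^2\, dy$: using $\ei\asymp d\cdot d_K^{\xgp}$ from \eqref{eigenest} together with Lemma~\ref{doubling}, and exploiting that within $\mathcal{B}(x,r)$ both $d$ and $d_K$ are comparable to $d(x)+r$ and $d_K(x)+r$ respectively (after a case distinction on whether each of these quantities is $\ll r$, $\asymp r$, or $\gg r$), one arrives at
\bal
V(x,r) \asymp r^N\bigl(d(x)+r\bigr)^2 \bigl(d_K(x)+r\bigr)^{2\xgp}, \qquad 0<r<\xb_1.
\eal

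Next I would invoke the standard implication ``parabolic Harnack $+$ doubling $+$ Poincar\'e $\Longrightarrow$ two-sided Gaussian kernel bounds'' as formulated in \cite[\S 5.4]{SC1}. The four ingredients are exactly Theorem~\ref{Harnack} (parabolic Harnack), Lemma~\ref{doubling} (doubling), Theorem~\ref{thm:3.9} (Poincar\'e) and Theorem~\ref{density} (density of test functions in $H^1(\xO;\ei^2)$), all already at our disposal. That machinery yields
\bal
k(t,x,y) \asymp V(x,\sqrt{t})^{-1/2}\,V(y,\sqrt{t})^{-1/2}\,\exp\bigl(-c\,|x-y|^2/t\bigr)
\eal
for $0<t\leq T$, with the sharp Gaussian factor in the upper bound obtained through the Davies perturbation method (as already used to derive Theorem~\ref{weimaintheoremintro1}) and the matching lower bound obtained through a Harnack chain along space-time cylinders joining $(t,x)$ to $(t,y)$, coupled with the on-diagonal estimate $k(t,x,x)\gtrsim V(x,\sqrt{t})^{-1}$. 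Substituting the volume asymptotics $V(x,\sqrt{t}) \asymp t^{N/2}(d(x)+\sqrt{t})^2 (d_K(x)+\sqrt{t})^{2\xgp}$ into this expression produces precisely the stated two-sided estimate.

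The main obstacle lies in the volume computation: the balls $\mathcal{B}(x,r)$ are non-Euclidean and the weight $\ei^2$ degenerates on $\partial\xO$ while behaving singularly along $K$, so $V(x,r)$ must be estimated uniformly with respect to the position of $x$ relative to both sets, which forces a careful atlas-type case analysis (interior points; points near $\partial\xO\setminus K$; points near $K$). A secondary delicate point arises in the critical case $\xm=k^2/4$, where $\ei\notin H_0^1(\xO)$ and belongs only to $H^1_{\loc}(\xO)$; here one must verify that the version of the eigenfunction estimate \eqref{eigenest} established in the Appendix, together with the density identity $H^1_0(\xO;\ei^2)=H^1(\xO;\ei^2)$, is strong enough to transplant the Moser iteration of \cite{GS,SC1} to the weighted space without losing the exponent $\xgp$. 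Once these two points are handled, the remainder of the deduction of Theorem~\ref{weimaintheoremintro2} from Theorem~\ref{Harnack} is a direct application of Saloff-Coste's framework.
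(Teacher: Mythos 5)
Your proposal follows essentially the same route as the paper: derive the small-time kernel estimate in terms of the weighted volume $\mathcal{M}(x,\sqrt{t}) = \int_{\mathcal{B}(x,\sqrt{t})\cap\Omega}\ei^2$ via the Grigoryan--Saloff-Coste machinery built from the parabolic Harnack inequality (Theorem~\ref{Harnack}), doubling (Lemma~\ref{doubling}), Poincar\'e (Theorem~\ref{thm:3.9}) and density (Theorem~\ref{density}), which is exactly the paper's Theorem~\ref{weismalltime1}, and then substitute the volume asymptotics of Corollary~\ref{doublingcoro}. The only minor discrepancy is your attribution of the sharp Gaussian upper exponent to the Davies perturbation method --- in the paper Davies is reserved for the large-time regime, while the small-time Gaussian factor comes directly from the Saloff-Coste framework --- but this does not affect the correctness of the argument.
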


Let $h(t,x,y)$ denote the Dirichlet heat kernel of $L_{\mu}$.
It is then immediate that $h(t,x,y)=(\ei(x)\ei(y))e^{-\xl_\xm t}k(t,x,y)$.
Hence, by Theorems \ref{weimaintheoremintro1} and \ref{weimaintheoremintro2}, we obtain the following theorem.
\begin{theorem}\label{maintheorem1}
Let $\xm\leq \frac{k^2}{4}$. There exist $T=T(\xO,K,\xm)>0,$ $C_1=C_1(\xO,K,\xm,T,\xl_\xm)>1$ and $C_2=C(\xO,K,\xm,T)>1$ such that
 \bal
 {\rm (i)} & \\
&C^{-1}_1\Big(\frac{d(x)}{d(x)+\sqrt{t}}\Big)\Big(\frac{d(y)}{d(y)+\sqrt{t}}\Big)
\Big(\frac{d_K(x)}{d_K(x)+\sqrt{t}}\Big)^{\xg_+}\Big(\frac{d_K(y)}{d_K(y)+\sqrt{t}}\Big)^{\xg_+}
t^{-\frac{N}{2}}\exp\Big(-C_1\frac{|x-y|^2}{t}\Big)\\
&\leq h(t,x,y)\leq\\
& C_1\Big(\frac{d(x)}{d(x)+\sqrt{t}}\Big)\Big(\frac{d(y)}{d(y)+\sqrt{t}}\Big)
\Big(\frac{d_K(x)}{d_K(x)+\sqrt{t}}\Big)^{\xg_+}
\Big(\frac{d_K(y)}{d_K(y)+\sqrt{t}}\Big)^{\xg_+}
t^{-\frac{N}{2}}\exp\Big(-C^{-1}_1\frac{|x-y|^2}{t}\Big),
\eal
for any $ 0<t<T$ and $x,y\in\xO.$
\begin{align*}
\hspace{-3cm}{\rm (ii)}  \hspace{2cm} C^{-1}_2\xf_\xm(x)\xf_\xm(y)e^{-\xl_\xm t}\leq h(t,x,y)\leq C_2\xf_\xm(x)\xf_\xm(y)e^{-\xl_\xm t},
\end{align*}
for any $\ t>T$ and $x,y\in\xO.$
\end{theorem}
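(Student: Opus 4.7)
The plan is to derive Theorem \ref{maintheorem1} as a direct consequence of the kernel estimates for the ground-state-transformed semigroup together with the eigenfunction asymptotics.

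First, I would justify the identity $h(t,x,y)=\phi_\mu(x)\phi_\mu(y)e^{-\lambda_\mu t}k(t,x,y)$. The substitution $u=e^{-\lambda_\mu t}\phi_\mu v$ used in \eqref{st} shows that $u$ solves $u_t+L_\mu u=0$ with Dirichlet data iff $v$ solves $v_t+\mathcal{L}_\mu v=0$ in the weighted space $L^2(\Omega;\phi_\mu^2)$. Thus if $h$ is the Dirichlet heat kernel of $L_\mu$ on $L^2(\Omega, dx)$ and $k$ is the kernel of $e^{-\mathcal{L}_\mu t}$ on $L^2(\Omega,\phi_\mu^2 dx)$, then for $u_0\in L^2(\Omega)$, setting $v_0=u_0/\phi_\mu$ (so that $v_0\in L^2(\Omega;\phi_\mu^2)$), we get
\[
\int_\Omega h(t,x,y)u_0(y)\,dy=e^{-\lambda_\mu t}\phi_\mu(x)\int_\Omega k(t,x,y)\phi_\mu(y)u_0(y)\,dy,
\]
which forces $h(t,x,y)=e^{-\lambda_\mu t}\phi_\mu(x)\phi_\mu(y)k(t,x,y)$ (this step is what the paper calls ``immediate''; I would simply double check that the measures are handled correctly).

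Next, I would combine this identity with the estimate \eqref{eigenest}, namely $\phi_\mu(x)\asymp d(x)d_K(x)^{\gamma_+}$, which is valid for every $\mu\leq k^2/4$ (as stated at the end of the eigenfunction discussion).

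For part (i), pick $T=T(\Omega,K,\mu)$ as in Theorem \ref{weimaintheoremintro2} and apply that theorem. Multiplying the two-sided bound on $k(t,x,y)$ by $\phi_\mu(x)\phi_\mu(y)$ produces the factors $d(x)/(d(x)+\sqrt t)$, $d(y)/(d(y)+\sqrt t)$, $(d_K(x)/(d_K(x)+\sqrt t))^{\gamma_+}$ and $(d_K(y)/(d_K(y)+\sqrt t))^{\gamma_+}$ exactly as in the statement. Finally, for $0<t<T$ the factor $e^{-\lambda_\mu t}$ lies between $e^{-|\lambda_\mu| T}$ and $e^{|\lambda_\mu| T}$, hence can be absorbed into the constant $C_1=C_1(\Omega,K,\mu,T,\lambda_\mu)$.

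For part (ii), take the same $T$ and apply Theorem \ref{weimaintheoremintro1}, which gives $k(t,x,y)\asymp 1$ uniformly for $t\geq T$ and $x,y\in\Omega$ with implicit constant depending only on $\Omega,K,\mu,T$. Multiplying through by $\phi_\mu(x)\phi_\mu(y)e^{-\lambda_\mu t}$ yields the two-sided bound in (ii) with $C_2=C_2(\Omega,K,\mu,T)$.

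There is no genuine obstacle here; the theorem is a clean repackaging of the two previous kernel theorems via the ground-state transformation. The only point that deserves care is the verification that the eigenfunction estimate \eqref{eigenest} extends to the critical exponent $\mu=k^2/4$ (where $\phi_\mu$ is only in $H^1_{\mathrm{loc}}$, not in $H^1_0$); this is handled by the appendix result cited just after \eqref{eigenest}, so it is legitimate to invoke \eqref{eigenest} in full generality $\mu\leq k^2/4$ throughout the argument above.
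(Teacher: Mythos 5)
Your proposal is correct and matches the paper's (very terse) argument exactly: derive $h(t,x,y)=e^{-\lambda_\mu t}\phi_\mu(x)\phi_\mu(y)k(t,x,y)$ via the ground-state transformation, then multiply the two-sided bounds on $k$ from Theorems \ref{weimaintheoremintro2} and \ref{weimaintheoremintro1} by $\phi_\mu(x)\phi_\mu(y)\asymp d(x)d_K^{\gamma_+}(x)\,d(y)d_K^{\gamma_+}(y)$. Your bookkeeping of the $e^{-\lambda_\mu t}$ factor — absorbed into $C_1$ on $(0,T)$, retained explicitly for $t>T$ so $C_2$ is independent of $\lambda_\mu$ — correctly reproduces the stated dependence of the constants.
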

\noindent If $\xl_\xm>0,$ then by the above theorem we can obtain the existence of a minimal Green function
$G_{\xm}(x,y)$ of $L_{\mu}$ as well as precise asymptotic for $G_{\xm}(x,y)$ (see Subsection \ref{sub:greek} for more details).

\subsection{Martin Kernels and boundary value problems}

If $\xm<C_{\xO,K}$ then
the operator  $L_\xm=-\xD-\frac{\xm}{d_K^2}$ is coercive in $H_0^1(\xO)$.
Hence, taking into account the discussion on the first eigenfunction $\ei$
of \eqref{eigenvalue}, we may apply Ancona's results in \cite{An} to deduce
that any positive solution $u$ of $L_\mu u = 0$ in $\Omega$ can be
represented like \eqref{repr1}.
If $\xm=C_{\xO,K}<\frac{k^2}{4}$ then there exists an $H^1_0$
minimiser of the Hardy quotient and therefore there is no Green function
and the operator is not coercive.
In the remaining case $\xm=C_{\xO,K}=\frac{k^2}{4}$, the operator $L_\xm$
clearly is not coercive and this case is not
covered by Ancona's results in \cite{An}.
One of the main goals of this work is to prove that the
assumption $\xl_{\mu}>0$ suffices to have a respective representation formula, also in the case $\mu=\frac{k^2}{4}$.

In order to state the main results we first need to give some notations and definitions.
For $\xb>0$ we set
\[
K_\xb=\{x\in\mathbb{R}^N\setminus K:\;d_K(x)<\xb\} ,
\qquad
\Omega_\xb=\{x\in \Omega  :\;d(x)<\xb\} .
\]
We assume that $\xb$ is small enough so that for any $x\in \xO_{\xb}$
there exists a unique $\xi_x\in\partial\xO,$ which satisfies $d(x)=|x-\xi_x|.$
Now set
\be
\tilde{d}_{K}(x)=\sqrt{|\dist^{\partial\xO}(\xi_x,K)|^2+|x-\xi_x|^2} \, ,
\qquad x\in K_{\beta},
\label{dtilda}
\ee
where $\dist^{\partial\xO}(\xi_x,K)$ denotes the distance of $\xi_x$
to $K$ measured on $\partial\Omega$.

Let $\xb_0>0$ (this will be determined in Lemma \ref{subsup}).
We consider a smooth cut-off function $0\leq\eta_{\xb_0}\leq1$
with compact support in $K_{\frac{\xb_0}{2}}$
such that $\eta_{\xb_0}=1$ in $\overline{K}_{\frac{\xb_0}{4}}$.
We define
\[
W(x)=\left\{ \BAL &(d+\tilde d^2_K)\tilde d_{K}^\xgn,\qquad&&\text{if}\;
\mu <\frac{k^2}{4}, \\
& (d+\tilde d^2_K)\tilde{d}_{K}^{-\frac{k}{2}}(x)|\ln \tilde d_{K}(x)|,\qquad&&\text{if}\;\mu =\frac{k^2}{4},
\EAL \right. \quad x \in \Omega \cap K_{\xb_0} ,
\]
and
\[
\tilde W(x):=(1-\eta_{\beta_0}(x))+\eta_{\beta_0}(x)W(x), \quad x \in \Omega.
\]
Let  $h \in C(\partial \Omega)$ and
$u \in H^1_{loc}(\xO)\cap C(\xO)$. We write $\trti(u)=h$ whenever
\bel{bdrcond3}
\lim_{x\in\Omega,\;x\rightarrow y\in\partial\xO}
\frac{u(x)}{ \tilde{W}(x)}=h(y)\qquad\text{uniformly for } y\in\partial\xO.
\ee
In Section \ref{section5}
we prove that for any $h \in C(\partial \Omega)$ the problem
\bal
\left\{ \BAL
L_{\xm }v&=0 , \qquad\mathrm{in}\;\;\xO,\\
\trti(v)&=h , \qquad\mathrm{on}\;\;\partial\xO,
\EAL \right.
\eal
has a unique solution $v =v_h\in H^1_{loc}(\xO)\cap C(\xO)$.
From this and the accompanying estimate follows that for any $x_0\in\xO$
the mapping $h\mapsto v_h(x_0)$ is a linear positive functional on $C(\partial\xO)$. Thus there exists a unique Borel measure on $\partial\Omega$, called {\it $L_{\xm }$-harmonic measure} in $\xO,$ denoted by $\xo^{x_0}$, such that
\bal
v_{h}(x_0)=\int_{\partial\xO}h(y) d\xo^{x_0}(y).
\eal
Thanks to the Harnack inequality the measures $\xo^x$ and $\xo^{x_0},$ $x_0,\;x\in \xO$, are mutually absolutely continuous. Therefore, the Radon-Nikodyn derivative exists and we set
\bal
K_{\xm}(x,y):=\frac{dw^x}{dw^{x_0}}(y)\qquad\mathrm{for}\;\xo^{x_0}\text{- almost all }y\in\partial\xO.
\eal
\begin{definition}
Fix $\xi\in\partial\xO.$ A function $\mathcal{K}$ defined in $\xO$ is called a kernel function for $L_\mu$ with pole at $\xi$ and basis at $x_0\in\xO$ if
\begin{eqnarray*}
& {\rm (i)} & \mbox{$\mathcal{K}(\cdot,\xi)$ is $L_{\xm }$-harmonic in $\Omega$,} \\
& {\rm (ii)} & \mbox{
 $ \frac{\mathcal{K}(\cdot,\xi)}{\tilde{W}(\cdot)}\in C(\overline{\xO}\setminus\{\xi\})$ and for any $\eta \in\prt\Omega\setminus\{\xi\}$
 we have }
\lim_{x \in \Omega,\; x \to \eta}
\frac{\mathcal{K}(x,\xi)}{\tilde W(x)}=0,\\
& {\rm (iii)} & \mbox{$\mathcal{K}(x,\xi)>0$ for each $x\in\xO$ and $\mathcal{K}(x_0,\xi)=1.$}
\end{eqnarray*}
\end{definition}
\noindent
Using the ideas in \cite{caffa}, we show the existence and uniqueness of a kernel function with pole at $\xi$ and basis at $x_0$ (see Proposition \ref{uniq}). As a result we obtain the existence of the Martin kernel
and moreover
\bal
K_\xm(x,\xi)=\lim_{y\in\xO,\;y\to\xi}\frac{G_{\mu}(x,y)}{G_{\mu}(x_0,y)},\quad\forall \xi\in\partial\xO.
\eal
In addition, by the estimates on Green function $G_\xm(x,y)$ of $L_\xm$ (see Proposition \ref{green}) we obtain the following result.
\begin{theorem}
\label{poisson}
Assume that $\xm \leq \frac{k^2}{4}$ and $\lambda_{\mu}>0$.
We then have: \newline
$\ia$ If $\xm <\frac{k^2}{4}$ or $\xm=\frac{k^2}{4}$ and $k<N$ then
\begin{equation}
\label{Martinest1}
K_{\xm}(x,\xi) \asymp\frac{d(x)}{|x-\xi|^N}\left(\frac{d_K(x)}{\left(d_K(x)+|x-\xi|\right)^2}\right)^{\xg_+},\quad  \mbox{ in } \Omega\times\partial\xO
\end{equation}
$\ib$ If  $\xm=\frac{N^2}{4}$ (so $k=N$), then
\begin{equation}
\label{Martinest2}
K_{\mu}(x,\xi) \asymp \frac{d(x)}{|x-\xi|^N}\left(\frac{|x|}{\left(|x|+|x-\xi|\right)^2}\right)^{-\frac{N}{2}}
+\frac{d(x)}{|x|^{\frac{N}{2}}}\big|\ln |x-\xi| \big|,\quad
\mbox{ in } \Omega\times\partial\xO
\end{equation}
\end{theorem}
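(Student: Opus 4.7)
The plan is to exploit the characterization
\begin{equation*}
K_\mu(x,\xi) \;=\; \lim_{y\in\Omega,\,y\to\xi}\frac{G_\mu(x,y)}{G_\mu(x_0,y)},
\end{equation*}
already recorded above the theorem, so the whole question reduces to inserting the two-sided Green function bounds of Proposition~\ref{green} and passing carefully to the limit. These Green function bounds themselves come by integrating the heat kernel estimates of Theorem~\ref{maintheorem1}: write $G_\mu(x,y)=\int_0^T+\int_T^\infty$; the tail $\int_T^\infty$ is controlled by a constant multiple of $\phi_\mu(x)\phi_\mu(y)$ (finite because $\lambda_\mu>0$) and is absorbed by the short-time piece, while on $[0,T]$ the change of variables $s=|x-y|^2/t$ turns the integral into an essentially Gaussian one whose value is dictated by the $d(\cdot)$ and $d_K(\cdot)$ factors.

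For case~\ia, I would substitute the resulting Green function estimate into the quotient and track what happens factor by factor as $y\to\xi\in\partial\Omega$. The Euclidean lengths $|x-y|$ and $|x_0-y|$ tend to the positive numbers $|x-\xi|$ and $|x_0-\xi|$. The factor $d(y)\to 0$ appears in numerator and denominator and cancels, leaving the bounded factor $|x_0-\xi|/|x-\xi|$. The $d_K(y)$-dependent pieces either tend to a strictly positive comparable limit (if $\xi\notin K$) or display the symmetric singular behaviour $(d_K(y)/|x-\xi|)^{\gamma_+}$ versus $(d_K(y)/|x_0-\xi|)^{\gamma_+}$ (if $\xi\in K$), so in either case the $y$-singularity cancels out. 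Using the elementary comparisons $d(x)+|x-\xi|\asymp|x-\xi|$ for $\xi\in\partial\Omega$ and $d_K(x)+|x-\xi|\asymp d_K(\xi)+|x-\xi|$ from the triangle inequality, what survives is exactly the right-hand side of \eqref{Martinest1}, with all $x_0$-dependent quantities absorbed into implicit constants (these are uniform in $\xi\in\partial\Omega$ because $x_0$ is fixed away from $\partial\Omega\cup K$).

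The main obstacle is case~\ib. Here $K=\{0\}\subset\partial\Omega$, the roots of $\gamma^2+N\gamma+N^2/4=0$ coincide at $-N/2$, and the short-time Green function integral becomes resonant: in the regime where $t$ ranges between $\max(d_K(x),d_K(y))^2$ and $|x-y|^2$, the factors $(d_K(\cdot)/(d_K(\cdot)+\sqrt t))^{\gamma_+}$ on both sides combine with $t^{-N/2}$ to give, after the $s=|x-y|^2/t$ substitution, an integral that is no longer tempered by a decaying Gaussian on the relevant range, producing a $|\ln|x-y||$ correction in addition to the polynomial one. This is the classical critical-Hardy resonance.

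The delicate point is to verify that this logarithm survives the normalization by $G_\mu(x_0,y)$. Since $x_0$ is fixed (away from $0=K$ and from $\partial\Omega$), the logarithm in $G_\mu(x_0,y)$ is dictated by the $y$-dependence only and becomes a bounded quantity in the limit $y\to\xi$, whereas the $x$-dependent log $|\ln|x-\xi||$ in $G_\mu(x,y)$ is genuinely of variable size. The logarithm in the numerator therefore cleanly passes through the quotient, producing the second summand in \eqref{Martinest2} while the polynomial piece yields the first. Carrying out both the upper and lower bound with matched constants, while simultaneously accommodating the critical behaviour in $d(\cdot)$ and $|\cdot|$, is where I expect the genuine technical effort to lie.
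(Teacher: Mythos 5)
Your proposal is correct and follows precisely the paper's approach: the paper's proof of Theorem \ref{poisson} simply invokes Proposition \ref{martinest} (identifying $K_\mu(x,\xi)$ with $\lim_{y\to\xi}G_\mu(x,y)/G_\mu(x_0,y)$) together with the two-sided Green function estimates of Proposition \ref{green}, which is exactly the quotient-and-limit computation you carry out. The cancellations you describe (including $d_K(x)+|x-\xi|\asymp d_K(\xi)+|x-\xi|$, the disappearance of the $d(y)$ and $|y|^{-N/2}$ factors, and the survival of the $|\ln|x-\xi||$ term in the critical case because the denominator's logarithm stays bounded) are all correct and fill in the algebra that the paper leaves implicit.
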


When $K=\partial\xO$,  Filippas, Moschini and Tertikas \cite{FMT2}
derived sharp two-sided estimate on the associated heat kernel. These
estimates where then used
in order to obtain sharp estimates on $G_\xm(x,y).$  Chen and
V\'{e}ron \cite{CV} studied the operator $L_\xm$ with $K=\{0\}\subset\partial
\Omega$
and they constructed the corresponding Martin kernel. The case $K\subset\xO$ was thoroughly studied by Gkikas and Nguyen in \cite{gktai}. Estimates on the Green kernel of $L_{\xm V}=-\xD-\xm V,$ where $V$ is a singular potential such that $|V(x)|\leq c d^{-2}(x)$ in $\xO$, have been given by Marcus \cite{M1,M2}. Marcus and Nguyen \cite{MT} used Ancona's result to show that the Martin kernel $K_\xm(x,y)$ is well defined and they applied the results in \cite{M2} to the model case $L_\xm$ in order to obtain estimates on the Green kernel $G_\xm(x,y)$ and the Martin kernel $K_\xm(x,y)$. However, their results do not cover the critical case $\xm=\frac{k^2}{4}.$

In this work, we follow a different approach which does not use Ancona's result \cite{An} and allows us to study the critical case. In particular our work is inspired by the articles \cite{FMT2,GkV,gktai}. The main difference here is that $K\subset\partial\xO,$ which has an effect on the value of the optimal Hardy constant $C_{\xO,K}$ as well as on the behaviour of the eigenfunction $\ei.$ As a result, this fact yields substantial difficulties and reveals new aspects of the study of $L_\xm.$

We are now ready to state the representation formula.

\begin{theorem}
\label{Lemm11intro}
Assume that $\xm \leq \frac{k^2}{4}$ and $\lambda_{\mu}>0$.
Let $u$ be a positive $L_{\xm }$-harmonic function
in $\Omega.$ Then $u\in L^1(\Omega;\ei)$ and there exists a unique Radon measure $\nu$ on $\partial\Omega$ such that
\bal
u(x)=\int_{\partial\Omega }K_{\xm }(x,\xi)d\nu(\xi) =: \BBK_\mu[\nu] .
\eal
\end{theorem}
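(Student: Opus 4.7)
The plan is to prove existence by an inner-exhaustion/weak-compactness argument and then derive uniqueness from the well-posedness of the boundary value problem with continuous data established in Section \ref{section5}. Throughout, the key inputs are the Martin kernel estimates (Theorem \ref{poisson}), the Green function bounds inherited from Theorem \ref{maintheorem1}, and the boundary Harnack inequality (Theorem \ref{Harnack}).

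First, choose a smooth exhaustion $\Omega_n \nearrow \Omega$ with $\overline{\Omega_n}\subset \Omega$. Since $d_K\geq c_n>0$ on $\overline{\Omega_n}$, the operator $L_\mu$ restricted to $\Omega_n$ has bounded coefficients and is classically uniformly elliptic, so a standard $L_\mu$-harmonic measure $\omega_n^x$ on $\partial\Omega_n$ exists. Because $u$ is continuous on $\overline{\Omega_n}$ and $L_\mu$-harmonic, the classical Poisson representation gives
\[
u(x)=\int_{\partial\Omega_n}u(y)\,d\omega_n^x(y),\qquad x\in\Omega_n.
\]
Fix $x_0\in\Omega_{n_0}$ and set $K_n(x,y)=d\omega_n^x/d\omega_n^{x_0}(y)$, so that $u(x)=\int_{\partial\Omega_n}K_n(x,y)u(y)\,d\omega_n^{x_0}(y)$. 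In particular the total mass of the positive measure $\sigma_n:=u\,d\omega_n^{x_0}$ on $\partial\Omega_n$ is exactly $u(x_0)$.

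Next, push $\sigma_n$ forward to $\partial\Omega$ by the nearest-point projection $\pi_n\colon\partial\Omega_n\to\partial\Omega$ (well defined for large $n$ by the $C^2$-regularity of $\partial\Omega$), obtaining measures $\nu_n$ on $\partial\Omega$ of total variation $u(x_0)$. By weak-$\ast$ compactness a subsequence converges to a Radon measure $\nu$ on $\partial\Omega$. To identify the limit, fix $x\in\Omega$ and use the characterization $K_\mu(x,\xi)=\lim_{y\to\xi}G_\mu(x,y)/G_\mu(x_0,y)$ from Proposition \ref{uniq}, together with the convergence $G_n\to G_\mu$ on compact subsets of $\Omega\times\Omega$ away from the diagonal and the boundary Harnack inequality; this yields $K_n(x,y_n)\to K_\mu(x,\xi)$ whenever $y_n\in\partial\Omega_n$ satisfies $\pi_n(y_n)\to\xi$, uniformly for $\xi\in\partial\Omega$ (the uniformity near $K$ is provided by the continuity and boundedness of $K_\mu(x,\cdot)$ given by Theorem \ref{poisson}). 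Passing to the limit in the Poisson representation produces $u(x)=\int_{\partial\Omega}K_\mu(x,\xi)\,d\nu(\xi)=\BBK_\mu[\nu](x)$.

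The integrability $u\in L^1(\Omega;\phi_\mu)$ then follows by Fubini from
\[
\int_\Omega u\,\phi_\mu\,dx=\int_{\partial\Omega}\Big(\int_\Omega K_\mu(x,\xi)\,\phi_\mu(x)\,dx\Big)d\nu(\xi),
\]
since the inner integral is bounded uniformly in $\xi$ by combining the Martin kernel estimates of Theorem \ref{poisson} with the eigenfunction estimate \eqref{eigenest}. For uniqueness, suppose $\BBK_\mu[\nu_1]=\BBK_\mu[\nu_2]$. For arbitrary $h\in C(\partial\Omega)$ let $v_h$ be the solution of $L_\mu v=0$, $\trti(v)=h$ produced in Section \ref{section5}. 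Pairing $v_h$ against $\BBK_\mu[\nu_1-\nu_2]$ on each $\Omega_n$ via Green's identity, using $\tilde W$ to control the boundary contributions, and passing to the limit yields $\int h\,d\nu_1=\int h\,d\nu_2$ for every $h\in C(\partial\Omega)$, whence $\nu_1=\nu_2$.

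The main obstacle is the uniform-in-$\xi$ convergence $K_n(x,\,\cdot\,)\to K_\mu(x,\pi_n(\,\cdot\,))$ near $K$, where the Martin kernel is most singular and, in the critical case $\mu=k^2/4$ with $k=N$, carries a logarithmic correction (Theorem \ref{poisson}\,\ib). Controlling this region requires the boundary Harnack inequality of Theorem \ref{Harnack} on the anisotropic "balls" $\mathcal{B}(x,r)$, combined with the sharp two-sided heat kernel bounds of Theorem \ref{maintheorem1}, so that no mass of $\nu_n$ escapes to or concentrates anomalously on $K$ in the weak-$\ast$ limit; verifying this in the critical regime, where $\phi_\mu\notin H^1_0(\Omega)$, is the most delicate point of the proof.
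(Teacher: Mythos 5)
Your proposal takes a genuinely different route from the paper's. The paper follows Hunt--Wheeden: it constructs the representing measure directly as a r\'eduite, $\nu^x(F)=\inf\{R^{\Omega\cap\overline G}_u(x): F\subset G,\ G\ \text{open}\}$, uses Besicovitch differentiation together with the uniqueness of the kernel function (Proposition \ref{uniq}) to identify $d\nu^x/d\nu^{x_0}$ with $K_\mu(x,\cdot)$, and proves uniqueness via the smooth exhaustion, Lemma \ref{tyx} and Proposition \ref{22222}. You instead push forward the $L_\mu$-harmonic measure of $\Omega_n$ weighted by $u$ and rely on weak-$\ast$ compactness.

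There is a genuine gap in your identification of the weak-$\ast$ limit. You need the convergence $K_n(x,\pi_n^{-1}(\xi))\to K_\mu(x,\xi)$ to be \emph{uniform} in $\xi$, because $\nu_n\rightharpoonup\nu$ only weak-$\ast$; pointwise convergence of the integrands together with weak-$\ast$ convergence of the measures is not enough to pass to the limit in the Poisson representation (a simple counterexample: $\nu_n=\delta_{1/n}$, $f_n=\mathbf{1}_{[1/n,\infty)}$). The justification you offer for uniformity---that $K_\mu(x,\cdot)$ is continuous and bounded on $\partial\Omega$---does not imply uniform convergence of the sequence $K_n(x,\cdot)$; that would require either a Dini-type monotonicity or equicontinuity of the family $\{K_n(x,\pi_n^{-1}(\cdot))\}$, neither of which is established. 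Furthermore, relating $K_n$ (the Poisson kernel of $\Omega_n$, i.e.\ the normal derivative of $G_n$) to the Green-function ratios defining $K_\mu$ goes through boundary Harnack, which yields two-sided comparability with constants $\neq 1$, not exact convergence; pinning down the pointwise limit already requires a compactness argument plus Proposition \ref{uniq}, and the uniform statement needs substantially more. There is also a normalization issue lurking: the paper's Proposition \ref{22222} renormalizes the exhausting harmonic measures by $\tilde W$ before taking a limit, and your construction of $\nu_n=(\pi_n)_*(u\,d\omega_n^{x_0})$ implicitly absorbs this weight into $u$; this works only because $u\asymp\tilde W$ near $\partial\Omega$ is \emph{not} assumed, so the absence of the $\tilde W$ normalization has to be compensated exactly by the singular behavior of the kernels $K_n$, which is precisely what the unproven uniform convergence would have to encode. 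Finally, your uniqueness argument (Green's identity pairing $v_h$ against $\BBK_\mu[\nu_1-\nu_2]$ over $\Omega_n$) is a reasonable alternative to the paper's, but the estimates controlling the boundary contributions over $\partial\Omega_n$ as $n\to\infty$ are not carried out, and these are exactly where Proposition \ref{22222} is doing work in the paper's proof.
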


In order to study the corresponding boundary value problem, we should first
introduce the notion of the boundary trace. We will define it in a dynamic
way. In this direction, let $\{\xO_n\}$ be
a smooth exhaustion of $\Omega$, that is
an increasing sequence of bounded open smooth domains such that
$\overline{\xO_n}\subset \xO_{n+1}$, $\cup_n\xO_n=\xO$ and $\mathcal{H}^{N-1}(\partial \Omega_n)\to \mathcal{H}^{N-1}(\partial \Omega)$.
The operator $L_{\xm }^{\xO_n}$ defined by
\[
L_{\xm }^{\xO_n}u=-\xD u-\frac{\xm }{d^2_K}u
\]
is uniformly elliptic and coercive in $H^1_0(\xO_n)$ and its first eigenvalue $\lambda_{\xm }^{\xO_n}$ is larger than $\lambda_{\xm }$.
For $h\in C(\prt \xO_n)$ the problem
\[
\left\{
\begin{array}{ll}
 L_{\xm }^{\xO_n}v=0 , &    \qquad  \text{in } \xO_n\\
 v=h , &   \qquad  \text{on } \prt \xO_n,
\end{array}
\right.
\]
admits a unique solution which allows to define the $L_{\xm }^{\xO_n}$-harmonic measure on $\prt \xO_n$
by
\[
v(x_0)=\myint{\prt \xO_n}{}h(y)d\omega^{x_0}_{\xO_n}(y).
\]

\begin{definition}[$L_{\mu}$-boundary trace]
	A function $u\in W^{1,p}_{loc}(\xO),\;p>1$, possesses an $L_{\mu}$-\emph{boundary trace}  if there exists a measure $\nu\in\GTM(\prt\Omega)$ such that for any smooth exhaustion
$\{ \xO_n \}$ of $\xO$, there  holds
\[
\lim_{n\rightarrow\infty}\int_{ \partial \xO_n}\phi u\, d\xo_{\xO_n}^{x_0}=\int_{\partial \xO} \phi \,d\xn , \qquad\forall \phi \in C(\overline{\xO}).
\]
\end{definition}
The $L_{\mu}$-boundary trace of $u$ will be denoted by $\tr(u)$.

Let $\GTM(\partial\xO)$ denote the space of bounded Borel measures on $\partial\xO$ and $\GTM(\xO;\ei)$ the space of Borel measures $\tau$ on $\xO$ such that
$$
\int_\xO\ei d|\tau|<\infty.
$$
Arguing as in \cite{MV} we obtain in Lemma \ref{tracemartin}
that  for any $\nu \in \GTM(\partial \Omega)$ we have
$\tr(\BBK_\mu[\nu])=\nu$.

Assume now that $\tau\in\mathfrak{M}(\xO;\ei)$  and let
\[	
u=\mathbb{G}_{\mu}[\tau]:=
\int_{\Omega }G_{\xm }(x,y)d\tau(y).
\]	
Then $u\in W^{1,p}_{loc}(\xO)$ for every $1<p<\frac{N}{N-1}$ and $\tr(u)=0$ (see Lemma \ref{tracegreen}).

Next, we give the definition of weak solutions of the following boundary value problem.
\begin{definition}
Let $\tau\in\mathfrak{M}(\xO;\ei)$ and $\nu \in \mathfrak{M}(\partial\xO)$. We say that
$u\in L^1(\Omega;\ei)$ is a weak solution of
\begin{equation}\label{NHLPintro} \left\{
\begin{array}{ll}
 L_\mu u=\tau ,  & \qquad \text{in }\;\Omega,\\
\tr(u)=\xn,  &
\end{array}
\right. \end{equation}
if
\[
	 \int_{\Omega} u \, L_{\xm }\zeta \, dx=\int_{\Omega} \zeta \, d\tau + \int_{\Omega} \mathbb{K}_{\xm}[\xn]L_{\xm }\zeta \, dx \; ,
	\qquad\forall \zeta \in\mathbf{X}_\xm(\xO ,K),
\]
where
\begin{equation}
{\bf X}_\mu(\Omega ,K)=\Big\{ \zeta \in H_{loc}^1(\Omega): \phi_\mu^{-1} \zeta \in H^1(\Omega;\phi_\mu^{2}), \, \phi_\mu^{-1}L_\mu \zeta \in L^\infty(\Omega)  \Big\}.
\label{xkom}
\end{equation}
\end{definition}
Let us state our main result for problem \eqref{NHLPintro}.

\begin{theorem}
\label{thm:val}
Let $\tau\in\mathfrak{M}(\xO;\ei)$ and $\xn \in \mathfrak{M}(\partial\xO)$. There exists a unique weak solution $u\in L^1(\Omega;\ei)$ of \eqref{NHLPintro},
\be \label{reprweaksol1}
u=\mathbb{G}_{\mu}[\tau]+\mathbb{K}_{\xm}[\xn]
\ee
Furthermore there exists a positive constant $C=C(\Omega,K,\mu)$ such that
\begin{align} \label{esti2}
\|u\|_{L^1(\Omega;\ei)} \leq \frac{1}{\lambda_\mu}
\| \tau \|_{\GTM(\xO;\ei)} + C \| \nu \|_{\GTM(\partial\Omega)} .
\end{align}
If in addition $d\tau=fdx+d\rho$ where  $f\in L^1(\Omega;\ei)$ and
$\rho \in\mathfrak{M}(\xO;\ei)$,
then for any $\zeta \in \mathbf{X}_\xm(\xO,K)$ with $\zeta \geq 0$, there hold
\be\label{poi4}
	\int_{\Omega}|u|L_{\mu }\zeta \, dx\leq \int_{\Omega}\sign(u)f\zeta\, dx +\int_{\xO}\zeta d|\rho| +
	\int_{\Omega}\mathbb{K}_{\xm}[|\xn|] L_{\xm }\zeta \, dx,
	\ee
	\be\label{poi5}
	\int_{\Omega}u_+L_{\mu }\zeta \, dx\leq \int_{\Omega} \sign_+(u)f\zeta\, dx +\int_{\xO}\zeta\,d\rho_+ +
	\int_{\Omega}\mathbb{K}_{\xm}[\nu_+]L_{\xm }\zeta \,dx .
	\ee
\end{theorem}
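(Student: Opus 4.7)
The plan is to establish existence first via the representation formula \eqref{reprweaksol1}, then to prove the Kato-type inequalities \eqref{poi4}--\eqref{poi5} by approximation from smooth data, and finally to deduce uniqueness and the $L^1$ bound \eqref{esti2} from \eqref{poi4} applied with the test function $\zeta=\phi_\mu$. The observation that makes this work is that $\mathbf{X}_\mu(\Omega,K)$ is a natural dual space for $L^1(\Omega;\phi_\mu)$: the requirement $\phi_\mu^{-1}L_\mu \zeta \in L^\infty(\Omega)$ yields $\int_\Omega |u\, L_\mu \zeta|\,dx \leq \|\phi_\mu^{-1}L_\mu\zeta\|_\infty \|u\|_{L^1(\Omega;\phi_\mu)}$, controlling every integral pairing that arises. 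The hard part will be the boundary behaviour of test functions near $K \subset \partial\Omega$, where the Hardy singularity meets the Dirichlet boundary; we must verify that $\phi_\mu$ belongs to $\mathbf{X}_\mu(\Omega,K)$, and that the dynamic trace $\tr$ is continuous under simultaneous approximation of $f$, $\rho$ and $\nu$.

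\emph{Existence.} We set $u := \mathbb{G}_\mu[\tau] + \mathbb{K}_\mu[\nu]$. Using the two-sided estimates on $G_\mu$ of Proposition \ref{green}, on $K_\mu$ of Theorem \ref{poisson}, and the asymptotics \eqref{eigenest} of $\phi_\mu$, a Fubini computation shows $u \in L^1(\Omega;\phi_\mu)$. To verify the weak formulation, fix $\zeta \in \mathbf{X}_\mu(\Omega,K)$; swapping orders of integration and using the symmetry of $G_\mu$ gives
\[
\int_\Omega \mathbb{G}_\mu[\tau]\, L_\mu \zeta \, dx = \int_\Omega \Big(\int_\Omega G_\mu(y,x)\, L_\mu \zeta(x)\, dx\Big)\, d\tau(y) = \int_\Omega \zeta\, d\tau,
\]
the inner identity being first verified for $\zeta \in C_c^\infty(\Omega)$ and then extended by approximation (using the $L^\infty$ bound on $\phi_\mu^{-1}L_\mu\zeta$ and the regularity of $G_\mu$ away from the diagonal). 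Adding $\int_\Omega \mathbb{K}_\mu[\nu]\, L_\mu \zeta \, dx$ to both sides produces the defining weak equality of \eqref{NHLPintro}. The trace condition $\tr(u)=\nu$ follows from additivity of $\tr$ combined with the previously established identities $\tr(\mathbb{G}_\mu[\tau])=0$ (Lemma \ref{tracegreen}) and $\tr(\mathbb{K}_\mu[\nu])=\nu$ (Lemma \ref{tracemartin}), yielding simultaneously existence and the representation \eqref{reprweaksol1}.

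\emph{Kato inequalities.} We prove \eqref{poi4} first under the smoothness assumptions $f \in C_c^\infty(\Omega)$, $\rho \equiv 0$, $\nu \in C(\partial\Omega)$. The resulting $u$ is classical in $\Omega$, and the regularization $u_\varepsilon := (u^2+\varepsilon^2)^{1/2}-\varepsilon$ satisfies $L_\mu u_\varepsilon \leq (u/\sqrt{u^2+\varepsilon^2})\, L_\mu u$ pointwise. Multiplying by a non-negative $\zeta \in \mathbf{X}_\mu(\Omega,K)$, integrating over a smooth exhaustion $\{\Omega_n\}$ (with boundary contributions vanishing via the dynamic trace and Lemmas \ref{tracegreen}, \ref{tracemartin}) and letting successively $n \to \infty$ and $\varepsilon \to 0$ yields the smooth version of \eqref{poi4}; choosing $\zeta = \phi_\mu$ therein gives the corresponding smooth version of \eqref{esti2}. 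We next approximate a general $\tau = f\,dx + d\rho$ by $f_n \in C_c^\infty(\Omega)$ in $L^1(\Omega;\phi_\mu)$ and smooth $\rho_n, \nu_n$ in the narrow topology. The smooth $L^1$ bound applied to $u_n-u_m$ (which solves a smooth problem by linearity) shows that $\{u_n\}$ is Cauchy in $L^1(\Omega;\phi_\mu)$ and converges to $\mathbb{G}_\mu[\tau] + \mathbb{K}_\mu[\nu]$. The $L^\infty$ control of $\phi_\mu^{-1}L_\mu\zeta$ then lets every term in \eqref{poi4} pass to the limit. Inequality \eqref{poi5} follows by decomposing $u_+ = \tfrac{1}{2}(u+|u|)$ and combining \eqref{poi4} with the linear weak formulation.

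\emph{Uniqueness and $L^1$ estimate.} If $u_1, u_2$ are two weak solutions, their difference $w$ solves \eqref{NHLPintro} with $\tau=0$, $\nu=0$. Since $L_\mu\phi_\mu = \lambda_\mu\phi_\mu$ gives $\phi_\mu^{-1}L_\mu\phi_\mu = \lambda_\mu \in L^\infty$, so $\phi_\mu \in \mathbf{X}_\mu(\Omega,K)$, applying \eqref{poi4} to $w$ with $\zeta = \phi_\mu$ gives $\lambda_\mu \int_\Omega |w|\phi_\mu\,dx \leq 0$, whence $w \equiv 0$. The same choice of $\zeta$ applied to a general $u$ yields
\[
\lambda_\mu \int_\Omega |u|\phi_\mu\,dx \leq \int_\Omega \phi_\mu\,d|\tau| + \lambda_\mu \int_\Omega \mathbb{K}_\mu[|\nu|]\phi_\mu\,dx,
\]
and Fubini together with the uniform bound $\int_\Omega K_\mu(x,\xi)\phi_\mu(x)\,dx \leq C$ (derived by integrating Theorem \ref{poisson} against \eqref{eigenest}) converts the last term into $\lambda_\mu C\|\nu\|_{\mathfrak{M}(\partial\Omega)}$, completing the proof of \eqref{esti2}.
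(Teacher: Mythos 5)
There is a genuine circularity in your uniqueness step. You establish \eqref{poi4} only for the solution constructed by approximation, namely $u=\mathbb{G}_\mu[\tau]+\mathbb{K}_\mu[\nu]$: the smooth case treats the classical solution $\mathbb{G}_\mu[f]+\mathbb{K}_\mu[\nu]$, and the limit passage produces exactly this function. But then you try to apply \eqref{poi4} to $w=u_1-u_2$, the difference of two \emph{arbitrary} weak solutions with data $\tau=0$, $\nu=0$. For that data your constructed solution is $0$, so the only $w$ to which your version of \eqref{poi4} applies is $w=0$ --- which is the conclusion you want. If you instead try to use the weak formulation of $w$ directly with $\zeta=\phi_\mu$, you obtain $\lambda_\mu\int_\Omega w\,\phi_\mu\,dx=0$, not $\lambda_\mu\int_\Omega |w|\,\phi_\mu\,dx\le 0$; the absolute value is precisely what the missing Kato inequality would supply. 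Relatedly, your regularization $u_\varepsilon=(u^2+\varepsilon^2)^{1/2}-\varepsilon$ and the claim ``$L_\mu u_\varepsilon\le (u/\sqrt{u^2+\varepsilon^2})L_\mu u$ pointwise'' require $u$ to be classical, which is available for the constructed solution with smooth data but not for an arbitrary weak solution in $L^1(\Omega;\phi_\mu)\cap W^{1,p}_{loc}$.

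The paper avoids this by proving the a priori bound for an \emph{arbitrary} weak solution before any Kato inequality for $u$ is available (this is Lemma \ref{existence3} in the $\nu=0$ case). The mechanism is dual: for a given weak solution $u$ one solves the adjoint-type problem $L_\mu\zeta=\sign(u)\,\phi_\mu$ (Lemma \ref{existence1} gives $\zeta\in\mathbf{X}_\mu(\Omega,K)$), then applies Kato's inequality to $\zeta$ together with the comparison principle (Lemma \ref{comparison}) and Lemma \ref{testfunc-prop} to deduce $|\zeta|\le\lambda_\mu^{-1}\phi_\mu$. Feeding this $\zeta$ into the weak formulation yields $\|u\|_{L^1(\Omega;\phi_\mu)}\le\lambda_\mu^{-1}\|\tau\|_{\GTM(\Omega;\phi_\mu)}$ for any weak solution, hence uniqueness with $\nu=0$, hence (by subtracting $\mathbb{K}_\mu[\nu]$ and using Lemma \ref{tracemartin}) uniqueness in general, and only \emph{then} can one prove \eqref{poi4}--\eqref{poi5} knowing that the unique solution is the constructed one. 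To repair your proof you must replace the uniqueness paragraph with this dual test-function argument (or give an independent argument that every weak solution coincides with $\mathbb{G}_\mu[\tau]+\mathbb{K}_\mu[\nu]$). Once uniqueness is secured, your Kato approximation scheme can stand, though you should still spell out why the boundary terms generated by $u_\varepsilon$ on $\partial\Omega_n$ vanish in the limit --- the assertion ``via the dynamic trace'' is not automatic and is precisely the content that the paper handles through the $L_\mu$-harmonic measures $\omega^{x_0}_{\Omega_n}$, the comparison $|u|\le\mathbb{G}_\mu[|\tau|]+\mathbb{K}_\mu[|\nu|]$ on $\partial\Omega_n$, and the auxiliary functions $z_n,\zeta_n$.
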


It is worth mentioning here that Marcus and Nguyen \cite{MT} studied
problem \eqref{NHLPintro} by introducing an alternative normalized
boundary trace $\text{tr}^*(u)$
(see \cite[Definition 1.2]{MT}). However this normalized boundary trace is
well defined only  if $\xm<\min(C_{\xO,K},\frac{2k-1}{4}).$ As a consequence they
showed that the boundary value problem
\[
\left\{
\begin{array}{ll}
 L_\mu u =\tau , & \qquad \text{in }\;\Omega , \\[0.1cm]
\text{tr}^*(u)=\xn , &
\end{array}
\right.
\]
admits a unique solution provided $\xm<\min(C_{\xO,K},\frac{2k-1}{4}).$

Finally, we study a semilinear problem involving the operator $L_\xm$ and a subcritical absorption (for relevant works see \cite{CV,CheVer,DD1,DN,GkV,MT,MVbook,MarNgu,MarMor,GkT1,GkT2}). In particular we are concerned with the  problem
\begin{equation}
\label{NLinintro} \left\{
\begin{array}{ll}
 L_\mu u+g(u)=0 , & \qquad \text{in }\;\Omega,\\[0.1cm]
\tr(u)=\nu, &
\end{array}
\right.
\end{equation}
where $g: \mathbb{R}\rightarrow\mathbb{R}$ is a nondecreasing continuous function such that $g(0)=0$. The above problem was treated by
Marcus and Nguyen \cite{MT}, where they consider the normalized
boundary trace $\text{tr}^*(u)$ instead of $\tr(u).$ In the case where the
boundary trace is defined in a dynamic way, we have the following result.

\begin{theorem} \label{exist-subGKintro}
Let $\mu\leq k^2/4$ and let
$g: \mathbb{R}\rightarrow\mathbb{R}$ be a nondecreasing continuous function such that $g(0)=0$.  Assume that for some $p>1$ there holds
\begin{equation}
\label{subcd0intro1}
\int_1^\infty t^{-1-p}(g(t)-g(-t))dt < +\infty.
\end{equation}
Let $\nu \in \GTM(\partial \Omega)$. Then

(a) If  (\ref{subcd0intro1}) holds true with
$p=\min\left(\frac{N+1}{N-1},\frac{N+\xgp+1}{N+\xgp-1}\right)$ then  there exists a unique weak solution $u$ of \eqref{NLinintro}.

(b) Assume that either $k<N$ or $k=N$ and $\mu<N^2/4$. If $\nu$ has support in $K$ and  (\ref{subcd0intro1}) holds true with
$p=\frac{N+\xgp+1}{N+\xgp-1}$ then there exists a unique weak solution $u$ of \eqref{NLinintro}.

(c)  If $\nu$ has compact support in $\partial\Omega\setminus K$ and  (\ref{subcd0intro1}) holds true with $p=\frac{N+1}{N-1}$ then there exists a unique weak solution $u$ of \eqref{NLinintro}.

Moreover in all three cases the weak solution $u$ satisfies
\[
u+\mathbb{G}_{\mu}[g(u)]=\mathbb{K}_{\xm}[\xn],\quad a.e. \;\text{in}\;\xO .
\]
\end{theorem}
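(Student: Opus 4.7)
The plan is to recast \eqref{NLinintro} as the fixed-point equation
\[
u + \mathbb{G}_{\mu}[g(u)] = \mathbb{K}_{\mu}[\nu],
\]
which, by Theorem \ref{thm:val}, is equivalent to being a weak solution of \eqref{NLinintro} once $g(u)\in L^1(\Omega;\ei)$. The crucial analytic input is a Marcinkiewicz (weak-$L^p$) bound for the Martin potential with respect to the measure $\ei\,dx$. Using the pointwise estimates of Theorem \ref{poisson} and $\ei(x)\asymp d(x)d_K(x)^{\xgp}$, a direct computation of the distribution function via a dyadic decomposition in the three scales $|x-\xi|$, $d(x)$ and $d_K(x)$ should yield
\[
\bigl\|\mathbb{K}_\mu[\delta_\xi]\bigr\|_{L^{p_\xi,\infty}(\Omega;\,\ei\,dx)}\leq C,
\]
with $p_\xi=\tfrac{N+\xgp+1}{N+\xgp-1}$ for $\xi\in K$ and $p_\xi=\tfrac{N+1}{N-1}$ for $\xi\in\partial\Omega\setminus K$, the constant $C$ being uniform on compact subsets of $\partial\Omega\setminus K$. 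By linearity this transfers to $\mathbb{K}_\mu[|\nu|]$ with the exponent $p$ of the statement (the minimum of the two in case (a), and the appropriate single exponent in cases (b) and (c)). Combining this Marcinkiewicz bound with the layer-cake identity and Fubini, the subcritical assumption \eqref{subcd0intro1} then produces
\[
\|g(\mathbb{K}_\mu[\nu])\|_{L^1(\Omega;\ei)} \leq C\|\nu\|_{\GTM(\partial\Omega)}.
\]

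Existence follows by approximation. Set $g_n(t)=\max(-n,\min(g(t),n))$ and pick $\nu_n\in L^\infty(\partial\Omega)$ converging weakly to $\nu$. The map $T_n(v)=\mathbb{K}_\mu[\nu_n]-\mathbb{G}_\mu[g_n(v)]$ is a monotone nonexpansion of $L^1(\Omega;\ei)$ and admits a unique fixed point $u_n$, which solves the truncated version of \eqref{NLinintro}. The linear estimate \eqref{esti2} applied to $u_n+\mathbb{G}_\mu[g_n(u_n)]=\mathbb{K}_\mu[\nu_n]$ produces uniform $L^1(\Omega;\ei)$ bounds on $u_n$ and $g_n(u_n)$; the Marcinkiewicz control above, applied to $|u_n|\leq \mathbb{K}_\mu[|\nu_n|]+\mathbb{G}_\mu[|g_n(u_n)|]$ together with the analogous, standard weak bound for $\mathbb{G}_\mu$, upgrades this to equi-integrability of $\{g_n(u_n)\ei\}$. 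Interior regularity yields a.e.\ convergence of a subsequence $u_n\to u$, Vitali's theorem passes the limit through $\mathbb{G}_\mu\circ g$, and one arrives at $u+\mathbb{G}_\mu[g(u)]=\mathbb{K}_\mu[\nu]$, giving the weak solution by Theorem \ref{thm:val}.

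Uniqueness follows from \eqref{poi4}. If $u_1,u_2$ are two weak solutions of \eqref{NLinintro}, then $u_1-u_2$ solves $L_\mu(u_1-u_2)=-(g(u_1)-g(u_2))$ with vanishing $L_\mu$-boundary trace, so
\[
\int_\Omega |u_1-u_2|\,L_\mu\zeta\, dx \leq -\int_\Omega \sign(u_1-u_2)\bigl(g(u_1)-g(u_2)\bigr)\zeta\, dx \leq 0
\]
for every nonnegative $\zeta\in\mathbf{X}_\mu(\Omega,K)$, the last inequality being the monotonicity of $g$. Choosing $\zeta=\ei$, for which $L_\mu\zeta=\xl_\xm\ei$, forces $u_1=u_2$ a.e.\ in $\Omega$.

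The main obstacle is the uniform Marcinkiewicz estimate for $\mathbb{K}_\mu[\delta_\xi]$ when $\xi\in K$, and particularly near the critical coupling $\mu=k^2/4$: the factor $\bigl(d_K/(d_K+|x-\xi|)^2\bigr)^{\xgp}$ becomes strongly singular, and in the endpoint case $\mu=N^2/4$ the logarithm appearing in \eqref{Martinest2} must be absorbed without degrading the exponent $\tfrac{N+\xgp+1}{N+\xgp-1}$. Executing the distribution-function computation in the correct dyadic geometry --- simultaneously resolving the two scales $d$ and $d_K$ and handling the logarithmic endpoint --- is where I expect the bulk of the technical work to lie.
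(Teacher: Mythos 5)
Your strategy mirrors the paper's: Marcinkiewicz (weak-$L^p$) bounds for $\mathbb{K}_\mu$ in $L_w^p(\Omega;\phi_\mu)$ (Theorem \ref{lpweakmartin1} and the theorem following Theorem \ref{weakestk=N}), the subcriticality of $g$ to pass to $g(\pm\mathbb{K}_\mu[\nu_\pm])\in L^1(\Omega;\phi_\mu)$ (Lemma \ref{subcrcon} combined with \eqref{300}), a truncation-plus-limit argument for existence (this is Theorem \ref{nonlinearexistence}), and Kato's inequality \eqref{poi4} tested against $\zeta=\phi_\mu$ for uniqueness, which is exactly the paper's uniqueness proof. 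You also correctly identify the kernel estimates of Theorem \ref{poisson} in the three geometric regimes, and the logarithmic endpoint $k=N$, $\mu=N^2/4$, as the technical core; note however that the passage from $\delta_\xi$ to general $\nu$ is not pure ``linearity'' (weak-$L^p$ is only a quasi-norm) but requires Proposition \ref{bvivier}.

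The gap is in your passage to the limit for the truncated solutions. You want equi-integrability of $\{g_n(u_n)\phi_\mu\}$ from $|u_n|\leq\mathbb{K}_\mu[|\nu_n|]+\mathbb{G}_\mu[|g_n(u_n)|]$ together with weak bounds for $\mathbb{K}_\mu$ and $\mathbb{G}_\mu$. This is circular: the only uniform control on $g_n(u_n)$ is an $L^1(\phi_\mu)$ bound (obtained, incidentally, from \eqref{poi4} with $\zeta=\phi_\mu$, not from \eqref{esti2}); the corresponding weak-$L^q$ bound for $\mathbb{G}_\mu$ acting on $L^1(\phi_\mu)$ data lives at some $q$ that need not equal the $p$ of \eqref{subcd0intro1}, which is calibrated to $\mathbb{K}_\mu$; and nothing you write forces $\{g_n(u_n)\phi_\mu\}$ to be uniformly integrable. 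The paper sidesteps this by first establishing the \emph{pointwise} sandwich $-\mathbb{K}_\mu[\nu_-]\leq u_n\leq\mathbb{K}_\mu[\nu_+]$: apply \eqref{poi5} with $\zeta=\phi_\mu$ to the linear problem solved by $u_n-\mathbb{K}_\mu[\nu_+]$, and use that $g_n(u_n)\geq 0$ wherever $u_n>\mathbb{K}_\mu[\nu_+]\geq 0$, so the positive part on the left vanishes. This yields the fixed dominant $|g_n(u_n)|\leq g(\mathbb{K}_\mu[\nu_+])-g(-\mathbb{K}_\mu[\nu_-])\in L^1(\Omega;\phi_\mu)$, and dominated convergence then does what you asked Vitali to do. A smaller point: ``monotone nonexpansion of $L^1(\Omega;\phi_\mu)$ admits a unique fixed point'' is false as stated (the identity is a counterexample); the paper proves existence for bounded nonlinearity by minimizing the convex coercive functional $\mathcal{J}_\mu$ on an exhausting sequence of smooth subdomains and then letting the subdomains fill out $\Omega$.
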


When $k=N$ and $\mu=N^2/4$, the subcritical condition \eqref{subcd0intro1} must be modified by a  logarithmic factor
in order to obtain the desired result, that is
\begin{theorem} \label{exist-subGK4}
Let $K=\{0\}$ and $\mu=\frac{N^2}{4}$.
Assume that $g$ satisfies
\bal
\int_1^\infty  s^{-\frac{N+2}{N-2}-1}(\ln s)^{\frac{N+2}{N-2}} \big(g(s)-g(-s)\big)ds<\infty
\eal
and let $\alpha> 0$. Then there exists a unique solution $u$ of \eqref{NLinintro} with $\xn=\alpha\xd_0$. Moreover the function $u$ satisfies
\bal
u+\mathbb{G}_{\mu}[g(u)]=
\alpha\mathbb{K}_{\xm}[\xd_0],\quad a.e. \;\text{in}\;\xO.
\eal
\end{theorem}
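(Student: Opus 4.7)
The argument follows the template of Theorem~\ref{exist-subGKintro}(b), with the logarithm in the critical Martin kernel \eqref{Martinest2} being the origin of the $(\ln s)^{(N+2)/(N-2)}$ weight in the hypothesis. My plan has four steps: (i)~prove the critical integrability $g(\alpha\mathbb{K}_\mu[\delta_0])\in L^1(\Omega;\phi_\mu)$; (ii)~solve truncated problems with bounded $g_n$ via Schauder; (iii)~pass to the limit by dominated convergence; (iv)~obtain uniqueness via Kato's inequality.

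\emph{Step 1 (the critical integrability).} Put $u_\alpha:=\alpha\,\mathbb{K}_\mu[\delta_0]$. With $\gamma_+=-N/2$ and the critical Martin exponent $p:=(N+\gamma_++1)/(N+\gamma_+-1)=(N+2)/(N-2)$, Theorem~\ref{poisson}\ib applied at $\xi=0$ gives
\[
u_\alpha(x)\asymp \alpha\,\frac{d(x)}{|x|^{N/2}}\bigl(1+|\ln|x||\bigr),
\]
while \eqref{eigenest}, as extended to $\mu\leq k^2/4$ in the Appendix, furnishes $\phi_\mu(x)\asymp d(x)|x|^{-N/2}$. Flattening $\partial\Omega$ at $0$ and using polar coordinates $(\rho,\theta)$ with $d(x)\asymp \rho\theta_1$, the level set $\{u_\alpha>\lambda\}$ localises near the origin and reduces to $\{\rho<\rho^*(\theta_1)\}$, where the balance $\rho^{1-N/2}|\ln\rho|\sim \lambda/\theta_1$ yields $\rho^*\sim(\theta_1\ln\lambda/\lambda)^{2/(N-2)}$. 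Integrating $\phi_\mu\,dx\asymp\theta_1\,\rho^{N/2}\,d\rho\,d\theta$ over this level set produces
\[
\omega(\lambda):=\int_{\{u_\alpha>\lambda\}}\phi_\mu\,dx\lesssim \lambda^{-p}(\ln\lambda)^{p},
\]
and the layer-cake formula combined with the hypothesis then gives
\[
\int_\Omega g(u_\alpha)\phi_\mu\,dx\lesssim \int_1^\infty s^{-1-p}(\ln s)^{p}g(s)\,ds<\infty
\]
(and similarly for $g(-u_\alpha)$), whence $g(u_\alpha)\in L^1(\Omega;\phi_\mu)$.

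\emph{Steps 2--3 (truncation and limit).} Set $g_n(t):=(-n)\vee g(t)\wedge n$. Since $g_n$ is bounded, the map $\mathcal{T}_n(v):=u_\alpha-\mathbb{G}_\mu[g_n(v)]$ is continuous and compact on $L^1(\Omega;\phi_\mu)$ (compactness via Proposition~\ref{green} and interior elliptic regularity) and leaves a suitably large ball invariant. Schauder's fixed-point theorem then produces a weak solution $u_n$ of $L_\mu u_n+g_n(u_n)=0$ with $\tr(u_n)=\alpha\delta_0$; Kato-inequality comparisons applied to $(u_n)_-$ and to $u_\alpha-u_n$ force the a priori envelope $0\leq u_n\leq u_\alpha$. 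By Step~1, $|g_n(u_n)|\leq g(u_\alpha)+|g(-u_\alpha)|\in L^1(\Omega;\phi_\mu)$ uniformly in $n$. Interior elliptic compactness on compact subsets of $\Omega\setminus\{0\}$ extracts an a.e.\ limit $u_n\to u$ along a subsequence; continuity of $g$ gives $g_n(u_n)\to g(u)$ a.e., and dominated convergence upgrades this to $L^1(\Omega;\phi_\mu)$-convergence. Passing to the limit in the representation $u_n=u_\alpha-\mathbb{G}_\mu[g_n(u_n)]$ identifies $u$, by Theorem~\ref{thm:val}, as a weak solution of \eqref{NLinintro} with $\nu=\alpha\delta_0$ satisfying $u+\mathbb{G}_\mu[g(u)]=\alpha\mathbb{K}_\mu[\delta_0]$.

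\emph{Step 4 (uniqueness) and main difficulty.} For two weak solutions $u^{(1)},u^{(2)}$, the Kato-type inequality \eqref{poi5} applied to $w:=u^{(1)}-u^{(2)}$ (whose boundary trace vanishes and which satisfies $L_\mu w=g(u^{(2)})-g(u^{(1)})$), together with the monotonicity of $g$ (so that $\sign_+(w)\bigl(g(u^{(2)})-g(u^{(1)})\bigr)\leq 0$), forces $w_+\equiv 0$; the symmetric argument gives $w_-\equiv 0$. The real difficulty of the proof lies in Step~1: because $\gamma_+=-N/2$ puts us precisely at the integrability threshold at the origin, the logarithm coming from \eqref{Martinest2} is the \emph{only} mechanism that keeps $g(u_\alpha)\phi_\mu$ integrable, and tracking its exact power through the change of variables $s\sim\rho^{1-N/2}|\ln\rho|$ is what yields the sharp weight $(\ln s)^{(N+2)/(N-2)}$ appearing in the subcritical hypothesis. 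Once Step~1 is secured, the remainder of the argument runs parallel to Theorem~\ref{exist-subGKintro}.
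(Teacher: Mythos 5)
Your argument is correct and rests on exactly the same mechanism the paper uses: the critical exponent $\gamma_+=-N/2$ forces a logarithm into the Martin kernel estimate \eqref{Martinest2}, which upgrades the distribution bound for $\alpha\mathbb{K}_\mu[\delta_0]$ to $\lambda^{-p}(\ln\lambda)^p$ with $p=\tfrac{N+2}{N-2}$, and the matching logarithmic weight in the hypothesis is precisely what makes $g(\alpha\mathbb{K}_\mu[\delta_0])\,\phi_\mu$ integrable. The paper's own proof is a one-line citation of three already-proved results: Theorem~\ref{weakestk=N} gives your Step~1 distribution estimate (obtained there more bluntly, by bounding $d(x)\le|x|$ and inverting $-r^{-1}\ln r>\lambda$ via \eqref{34}, rather than through your polar-coordinate balance --- note your $\rho^*$ should carry $\ln(\lambda/\theta_1)$ rather than $\ln\lambda$, though the discrepancy is absorbed by the angular integral); Lemma~\ref{subcrcon} with $p=q=\tfrac{N+2}{N-2}$ performs your layer-cake step; and Theorem~\ref{nonlinearexistence} covers your Steps~2--4. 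The one genuine methodological departure is in Steps~2--3: you propose a Schauder fixed point for $v\mapsto u_\alpha-\mathbb{G}_\mu[g_n(v)]$ directly on $\Omega$, whereas Theorem~\ref{nonlinearexistence} solves the truncated problem by minimizing a convex coercive functional on $H^1_0(\Omega_n)$ along a smooth exhaustion before sending $n\to\infty$. Both routes work; Schauder is quicker once compactness of $\mathbb{G}_\mu$ on $L^\infty$-bounded inputs is secured, but your justification (``interior elliptic regularity'') should be completed with the domination $|\mathbb{G}_\mu[g_n(v)]|\le n\,\mathbb{G}_\mu[1]\in L^1(\Omega;\phi_\mu)$ so that local a.e.\ convergence upgrades to $L^1(\Omega;\phi_\mu)$-convergence; the variational route is more self-contained given the comparison machinery already in place. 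Your Step~4 matches the paper's use of \eqref{poi4}.
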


\section{Hardy-Sobolev type inequalities}

In this section we shall prove various Hardy-Sobolev type inequalities
that will be essential for our analysis.
We start by recalling the following result:

\begin{proposition}\cite[Lemma 2.1]{FF}
There exists $\xb_0=\xb_0(K,\xO)$ small enough such that, for any $x\in \xO\cap K_{\xb_0},$ the following estimates hold:
\begin{eqnarray*}
\mbox{a)}  && \tilde{d}_{K}^2(x)=d^2_{K}(x)(1+g(x)) \\
\mbox{b)}  && \nabla d(x) \cdot \nabla \tilde{d}_{K}(x)=\frac{d(x)}{\tilde{d}_{K}(x)} \\
\mbox{c)}  &&|\nabla\tilde{d}_{K}(x)|^2=1+h(x)  \\[0.1cm]
\mbox{d)}  && \tilde{d}_{K}(x)\xD \tilde{d}_{K}(x)=k-1+f(x) ,
\end{eqnarray*}
where the functions $g,\;h$ and $f$ satisfy
\be
|g(x)|+|h(x)|+|f(x)|\leq C_1(\xb_0,N) \tilde{d}_{K}(x),\quad \forall
x\in\xO\cap K_{\xb_0}. \label{frag}
\ee

\label{propdK}
\end{proposition}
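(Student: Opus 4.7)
My plan is to give a direct geometric computation based on Fermi-type coordinates adapted simultaneously to $\partial\Omega$ and $K$. Recall that for $x\in\Omega\cap K_{\beta_0}$ one has $x=\xi_x+d(x)\mathbf{n}(\xi_x)$, with $\mathbf{n}$ the inward unit normal to $\partial\Omega$; I will set $\rho(x):=\dist^{\partial\Omega}(\xi_x,K)$, so that by definition $\tilde d_K(x)^2=\rho(x)^2+d(x)^2$. Identity (b) I will handle first, as it is actually exact: since $|\nabla d|=1$ and $\xi_x$ is constant along the inward normal line through $\xi_x$, the function $\rho$, which depends on $x$ only through $\xi_x$, is also constant along that line, so $\nabla\rho\cdot\nabla d=0$. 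Differentiating $\tilde d_K^2=\rho^2+d^2$ then yields $\tilde d_K\nabla\tilde d_K=\rho\nabla\rho+d\nabla d$, and taking the inner product with $\nabla d$ gives (b).

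For (a), (c), (d), I will fix $y_0\in K$ and build a $C^2$ diffeomorphism $F(s,t,r)$ from a neighborhood of $0\in\mathbb{R}^{N-k}\times\mathbb{R}^{k-1}\times\mathbb{R}$ onto a neighborhood of $y_0$ in $\mathbb{R}^N$, by exponentiating local orthonormal frames of $TK$, of the normal bundle of $K$ inside $\partial\Omega$, and of $\mathbf{n}$. In these coordinates $\partial\Omega=\{r=0\}$, $K=\{t=0,\,r=0\}$, $d\circ F=r$, and the metric tensor pulled back by $F$ satisfies $g_{ij}=\delta_{ij}+O(|t|+r)$ thanks to the $C^2$ regularity of $\partial\Omega$ and $K$. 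The leading parts of both $d_K(x)^2$ and $\tilde d_K(x)^2$ equal $|t|^2+r^2$, with curvature remainders of cubic order in $(t,r)$; this delivers $|d_K^2-\tilde d_K^2|\leq C\tilde d_K^3$, hence (a).

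For (c), the identity $\tilde d_K\nabla\tilde d_K=\rho\nabla\rho+d\nabla d$, combined with $\nabla\rho\cdot\nabla d=0$ and $|\nabla d|=1$, reduces the problem to proving $|\nabla\rho|^2=1+O(\tilde d_K)$. Writing $\rho(x)=\tilde\rho(\xi_x)$ with $\tilde\rho$ the intrinsic distance on $\partial\Omega$ to $K$, the chain rule gives $\nabla\rho(x)=(D\xi_x)^{T}\nabla^{\partial\Omega}\tilde\rho(\xi_x)$; using $|\nabla^{\partial\Omega}\tilde\rho|=1$ and the expansion $D\xi_x=\Pi_{T_{\xi_x}\partial\Omega}+O(d)$ (coming from the Weingarten map of $\partial\Omega$) produces the required estimate. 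For (d), taking the divergence of $\tilde d_K\nabla\tilde d_K=\rho\nabla\rho+d\nabla d$ yields
\[
\tilde d_K\Delta\tilde d_K=\rho\Delta\rho+d\Delta d+|\nabla\rho|^2+|\nabla d|^2-|\nabla\tilde d_K|^2.
\]
Standard facts for distance functions to a $C^2$ hypersurface give $|\nabla d|=1$ and $d\Delta d=O(d)$; the same framework, applied intrinsically on $\partial\Omega$ to the codimension-$(k-1)$ submanifold $K$, gives $\tilde\rho\,\Delta^{\partial\Omega}\tilde\rho=k-2+O(\tilde\rho)$, which in turn yields $\rho\Delta\rho=k-2+O(\tilde d_K)$ after absorbing the Euclidean-vs-intrinsic discrepancy through the expansion of $D\xi_x$. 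Combining with (c) gives $\tilde d_K\Delta\tilde d_K=k-1+O(\tilde d_K)$, which is (d).

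The main obstacle will be the careful bookkeeping of curvature-induced error terms, most notably the expansion of the projection Jacobian $D\xi_x$ (which introduces the shape operator of $\partial\Omega$) and of the intrinsic Laplacian $\Delta^{\partial\Omega}\tilde\rho$ near $K$ (which introduces the second fundamental form of $K$ inside $\partial\Omega$). Both contributions must be shown to be of order $\tilde d_K$, not merely bounded; this is exactly the place where the $C^2$ hypotheses on $\partial\Omega$ and on $K$, together with the smallness of $\beta_0$, will be used.
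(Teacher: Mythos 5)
The paper itself does not prove this proposition; it is quoted from Fall--Mahmoudi \cite[Lemma 2.1]{FF}, so there is no in-paper argument to compare against. Your route is the natural geometric one and is, to the best of my knowledge, essentially the argument in \cite{FF}: write $\tilde d_K^2=\rho^2+d^2$ with $\rho(x)=\dist^{\partial\Omega}(\xi_x,K)$, note that $\rho$ is constant along the normal fibres of $\partial\Omega$ so that $\nabla\rho\cdot\nabla d=0$ exactly (giving (b) with no error term), and then reduce (a), (c), (d) to the standard distance-function facts $|\nabla d|=1$, $d\,\Delta d=O(d)$, $|\nabla^{\partial\Omega}\tilde\rho|=1$ and $\tilde\rho\,\Delta^{\partial\Omega}\tilde\rho=(k-1)-1+O(\tilde\rho)$ (codimension $k-1$ of $K$ inside $\partial\Omega$), via the identity obtained by taking the divergence of $\tilde d_K\nabla\tilde d_K=\rho\nabla\rho+d\nabla d$. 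I checked your algebra: $|\nabla\tilde d_K|^2=\frac{\rho^2|\nabla\rho|^2+d^2}{\rho^2+d^2}=1+O(\tilde d_K)$ once $|\nabla\rho|^2=1+O(d)$, and the divergence identity then yields $\tilde d_K\Delta\tilde d_K=(k-2)+1+1-1+O(\tilde d_K)=k-1+O(\tilde d_K)$, as required. The only place you must be careful, and you flag it yourself, is that the transfer from the intrinsic quantities on $\partial\Omega$ to the Euclidean ones on $\Omega$ (the expansion of $D\xi_x$ involving the Weingarten map, and the comparison of $\Delta\rho$ with $\Delta^{\partial\Omega}\tilde\rho$ via the second fundamental form of the level sets of $d$) must produce errors of order $\tilde d_K$ rather than merely $O(1)$; this is exactly where the $C^2$ regularity of $\partial\Omega$ and $K$ and the smallness of $\beta_0$ enter, and it is a genuine bookkeeping task rather than a gap in the strategy.
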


\begin{lemma}
\label{lem1}
Assume that $\xa \neq0$ and $\xg +\xa+k-1\neq 0$.
There exists $\xb_0>0$ and $C=C(\xg,\xa,k,\xb_0,N)$
such that for any open $V\subset K_{\xb_0}\cap \xO$
and for any $u\in C^\infty_c(V)$ there holds
$$
\int_Vd^\xa\tilde{d}_K^{\xg-1} |u|dx+\int_Vd^{\xa-1}\tilde{d}_K^{\xg} |u|dx\leq C \int_Vd^\xa\tilde{d}_K^{\xg} |\nabla u|dx.
$$
\end{lemma}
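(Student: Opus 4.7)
The plan is to prove the two terms on the left-hand side separately by constructing appropriate divergence vector fields and integrating by parts against $|u|$, using the geometric identities from Proposition \ref{propdK}. The first term will be handled with the field $\mathbf{F}_1 = d^\xa \tilde d_K^\xg \nabla \tilde d_K$; the second term will be handled with $\mathbf{F}_2 = d^\xa \tilde d_K^\xg \nabla d$, where the first inequality is used to dispose of the cross term generated by $\nabla d \cdot \nabla \tilde d_K$.

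For the first inequality, I would compute
\[
\operatorname{div}(\mathbf{F}_1) = \xa d^{\xa-1}\tilde d_K^\xg(\nabla d\cdot\nabla\tilde d_K) + \xg d^\xa \tilde d_K^{\xg-1}|\nabla\tilde d_K|^2 + d^\xa\tilde d_K^\xg \Delta\tilde d_K.
\]
Applying b), c), d) of Proposition \ref{propdK} this collapses to $d^\xa\tilde d_K^{\xg-1}\bigl[\xa+\xg+k-1+\xg h+f\bigr]$. Since $|\xg h+f|\leq C_1(|\xg|+1)\tilde d_K\leq C\xb_0$ by \eqref{frag}, shrinking $\xb_0$ so that this error is at most $|\xa+\xg+k-1|/2$ makes the bracket bounded below by $|\xa+\xg+k-1|/2$ and of constant sign. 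Integration by parts against $|u|\in W^{1,\infty}_c(V)$ gives
\[
\tfrac{|\xa+\xg+k-1|}{2}\int_V d^\xa\tilde d_K^{\xg-1}|u|\,dx \leq \Bigl|\int_V \mathbf{F}_1\cdot\nabla|u|\,dx\Bigr| \leq C\int_V d^\xa \tilde d_K^\xg |\nabla u|\,dx,
\]
where we used $|\nabla|u||\leq|\nabla u|$ and $|\nabla\tilde d_K|\leq C$.

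For the second inequality, compute
\[
\operatorname{div}(\mathbf{F}_2) = \xa d^{\xa-1}\tilde d_K^\xg + \xg d^{\xa+1}\tilde d_K^{\xg-2} + d^\xa \tilde d_K^\xg \Delta d,
\]
where I used $|\nabla d|=1$ and the dot product identity b). Since the distance function $d$ is $C^2$ in a tubular neighborhood of $\partial\xO$ (shrink $\xb_0$ so that $K_{\xb_0}\cap\xO$ lies in it), $\Delta d$ is bounded by some $C_0$ on $\operatorname{supp} u$. The cross term satisfies $d^{\xa+1}\tilde d_K^{\xg-2} = (d/\tilde d_K)^2 d^{\xa-1}\tilde d_K^\xg \leq d^\xa\tilde d_K^{\xg-1}$ using $d\leq\tilde d_K$, and the error term satisfies $d^\xa\tilde d_K^\xg|\Delta d|\leq C_0\xb_0\cdot d^{\xa-1}\tilde d_K^\xg$. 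Integrating by parts and rearranging,
\[
|\xa|\int_V d^{\xa-1}\tilde d_K^\xg|u|\,dx \leq \int_V|\mathbf{F}_2||\nabla u|\,dx + |\xg|\int_V d^\xa \tilde d_K^{\xg-1}|u|\,dx + C_0\xb_0\int_V d^{\xa-1}\tilde d_K^\xg|u|\,dx.
\]
Shrinking $\xb_0$ once more so that $C_0\xb_0\leq|\xa|/2$ absorbs the last term into the left-hand side, and the middle term is controlled by the first inequality already established.

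The main obstacle, and the reason for needing two vector fields rather than one, is that the two weights $d^\xa\tilde d_K^{\xg-1}$ and $d^{\xa-1}\tilde d_K^\xg$ arise as principal terms of two \emph{different} divergences, each non-degenerate under a different algebraic condition ($\xa+\xg+k-1\neq 0$ for the first, $\xa\neq 0$ for the second). A subtler issue is the cross term $\xg d^{\xa+1}\tilde d_K^{\xg-2}$ in $\operatorname{div}(\mathbf{F}_2)$: it is not lower-order but of the \emph{same} scale as the first weight, which is why the proof must be ordered — the first inequality is a prerequisite for closing the argument in the second. Choosing $\xb_0$ sufficiently small to simultaneously tame the Proposition \ref{propdK} error functions $g,h,f$ and the bounded quantity $\Delta d$ then concludes the proof.
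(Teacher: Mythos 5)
Your proposal is correct and follows essentially the same route as the paper: both proofs integrate by parts against $|u|$ with a vector field in the $\nabla\tilde d_K$ direction to produce the first weight (with the non-degeneracy factor $\xa+\xg+k-1$ surviving after Proposition \ref{propdK} collapses the divergence and $\xb_0$ is shrunk to tame $\xg h + f$), and then a second vector field in the $\nabla d$ direction for the second weight, using the first estimate to control the cross term $\xg\, d^{\xa+1}\tilde d_K^{\xg-2}$ coming from $\nabla d\cdot\nabla\tilde d_K=d/\tilde d_K$. The only cosmetic difference is that you absorb the $\Delta d$ error into the left-hand side directly, whereas the paper bounds it (together with the cross term) by $C\int d^\xa\tilde d_K^{\xg-1}|u|$ and then invokes the already-proved first inequality; both are valid for $\xb_0$ small.
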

\begin{proof}
By Proposition \ref{propdK} we have
\begin{align*}
\xg\int_V&d^\xa\tilde{d}_K^{\xg-1} |u|dx+\xg\int_Vd^\xa\tilde{d}_K^{\xg-1} h|u|dx=\int_Vd^\xa\nabla\tilde{d}_K^{\xg}\cdot \nabla\tilde{d}_K |u|dx\\
&=-\xa \int_Vd^{\xa-1} \tilde{d}_K^{\xg}
\nabla d\cdot \nabla\tilde{d}_K |u|dx -\int_Vd^{\xa}\tilde{d}_K^\xg\xD\tilde{d}_K  |u|dx-\int_Vd^\xa\tilde{d}_K^{\xg}\nabla\tilde{d}_K
\cdot \nabla|u|dx\\
&=- \xa\int_Vd^{\xa}\tilde{d}_K^{\xg-1} |u|dx-\int_Vd^{\xa}\tilde{d}_K^{\xg-1}(k-1+f) |u|dx-\int_Vd^\xa\tilde{d}_K^{\xg}\nabla\tilde{d}_K \cdot \nabla|u|dx \, ,
\end{align*}
that is
\[
(\xg +\xa+k-1)\int_Vd^\xa\tilde{d}_K^{\xg-1} |u|dx
= -\int_Vd^{\xa}\tilde{d}_K^{\xg-1}(f+\xg h) |u|dx-\int_Vd^\xa\tilde{d}_K^{\xg}\nabla\tilde{d}_K \cdot \nabla|u|dx \, .
\]
By the above equality, Proposition \ref{propdK} and \eqref{frag}, we can easily prove that
\begin{align*}
\big(|\xg +\xa+k-1|-C(C_1,\xg)\xb_0 \big)
\int_Vd^\xa\tilde{d}_K^{\xg-1} |u|dx\leq (1+C_1\sqrt{\xb_0})\int_Vd^\xa\tilde{d}_K^{\xg} |\nabla u|dx,
\end{align*}
where $C_1=C_1(\xb_0,N)$ is the constant in inequality \eqref{frag}. Choosing $\xb_0$ small enough, we obtain
\be
\label{anis1}
\int_Vd^\xa\tilde{d}_K^{\xg-1} |u|dx\leq C \int_Vd^\xa\tilde{d}_K^{\xg}|\nabla u|dx .
\ee
By \eqref{anis1} and Proposition \ref{propdK} we have
\begin{align*}
\left|\xa\int_Vd^{\xa-1}\tilde{d}_K^{\xg} |u|dx\right|&=\left|\int_V(\nabla d^{\xa}
\cdot \nabla d) \tilde{d}_K^{\xg} |u|dx\right|\\
&\leq C  \int_V d^{\xa}\tilde{d}_K^{\xg -1} |u|dx+
\int_Vd^\xa\tilde{d}_K^{\xg}|\nabla u|dx,
\end{align*}
provided $\xb_0$ is small enough. The result now follows.
\end{proof}

\begin{lemma}
Assume that $a \neq 0$ and $c +a+k-1\neq 0$.
Let $1\leq q \leq \frac{N}{N-1}$ and $b=a-1+N \frac{q-1}{q}$.
If $\beta_0$ is small enough then there exists $C=C(a,c,k,\xb_0,q,N)$
such that for any open $V\subset \xO\cap K_{\xb_0}$
and for any $u\in C^\infty_c(V)$ the following inequality is valid
\be
\Big(\int_Vd^{qb}\tilde d_K^{qc}|u|^qdx\Big)^\frac{1}{q}\leq
C \int_Vd^a\tilde{d}_K^{c} |\nabla u|dx .
\label{eq:new}
\ee
\end{lemma}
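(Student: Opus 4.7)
\emph{Proof sketch.} The plan is to establish \eqref{eq:new} first at the two endpoints $q=1$ and $q = N/(N-1)$, and then interpolate via H\"older's inequality for the intermediate values. At $q = 1$ one has $b = a-1$ and the desired estimate is exactly Lemma \ref{lem1} applied with its parameters chosen as $\gamma = c$ and $\alpha = a$; the hypotheses $a \neq 0$ and $c + a + k - 1 \neq 0$ are precisely what that lemma requires.

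For the endpoint $q = N/(N-1)$, where $b = a$, I would set $w = d^a \tilde d_K^c u$. Since $u \in C_c^\infty(V)$ and both $d$ and $\tilde d_K$ are smooth and strictly positive on $\mathrm{supp}\,u$, the product $w$ lies in $C_c^\infty(V)$ and extends by zero to $C_c^\infty(\mathbb{R}^N)$, so the classical $L^1$-Sobolev inequality applies. Expanding $\nabla w$ with the product rule and using $|\nabla d| = 1$ together with $|\nabla \tilde d_K| \leq 1 + C\sqrt{\tilde d_K}$ from Proposition \ref{propdK}, one obtains a pointwise bound of the form $|\nabla w| \leq |a|\, d^{a-1}\tilde d_K^c |u| + C|c|\, d^a \tilde d_K^{c-1} |u| + d^a \tilde d_K^c |\nabla u|$. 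Lemma \ref{lem1} absorbs the first two terms into the third, yielding
\[
\Big(\int_V d^{aN/(N-1)}\tilde d_K^{cN/(N-1)}|u|^{N/(N-1)}\, dx\Big)^{(N-1)/N} \leq C\int_V d^a \tilde d_K^c |\nabla u|\, dx.
\]

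For the intermediate case $1 < q < N/(N-1)$, I would introduce the interpolation parameter $\theta = N - (N-1)q \in (0,1)$, so that $1-\theta = (N-1)(q-1) \in (0,1)$. A short calculation using $b = a - 1 + N(q-1)/q$ verifies the pointwise factorisation
\[
d^{qb}\tilde d_K^{qc}|u|^q = \bigl(d^{a-1}\tilde d_K^c|u|\bigr)^\theta \bigl(d^{aN/(N-1)}\tilde d_K^{cN/(N-1)}|u|^{N/(N-1)}\bigr)^{1-\theta}.
\]
Integrating over $V$ and applying H\"older's inequality with conjugate exponents $1/\theta$ and $1/(1-\theta)$ bounds the left-hand side by the product of the two endpoint integrals raised to the powers $\theta$ and $1-\theta$ respectively. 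Taking the $1/q$-th power and inserting the two endpoint estimates, the total exponent of $\int_V d^a \tilde d_K^c |\nabla u|\, dx$ on the right becomes $\theta/q + (1-\theta)N/((N-1)q)$, which a direct computation shows equals $1$; this produces \eqref{eq:new}. The main technical step is the weighted Sobolev endpoint at $q = N/(N-1)$, where one must combine the classical Sobolev inequality on $\mathbb{R}^N$ with the absorption afforded by Lemma \ref{lem1}; the interpolation step itself is routine once both endpoints are in place.
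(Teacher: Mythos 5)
Your proof is correct and takes essentially the same route as the paper: a Hölder interpolation between the $L^1$ estimate (Lemma \ref{lem1}) and the $L^{N/(N-1)}$ estimate (the classical $L^1$-Sobolev inequality applied to $d^a\tilde d_K^c u$, with the extra terms absorbed again via Lemma \ref{lem1}). The paper arranges the Hölder step first and then bounds the two resulting factors — in fact your interpolation parameter $\theta = N-(N-1)q$ is precisely $q\theta_2$ for the paper's $\theta_1,\theta_2$ — so the two arguments are the same up to the order of presentation.
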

\begin{proof}
Let $0\leq\theta_i\leq 1, i=1,2$, be such that $\theta_1+\theta_2=1$ and $\frac{N-1}{N}\theta_1+\theta_2=\frac{1}{q}$.
By H\"{o}lder inequality we have
\begin{align*}
\int_Vd^{qb}\tilde d_K^{qc}|u|^qdx
&=\int_V\Big(d^{qa \theta_1}\tilde d_K^{qc\theta_1}
|u|^{\theta_1q}\Big)
\left(d^{q(a-1)\theta_2}\tilde d_K^{qc\theta_2}|u|^{\theta_2q}\right)dx\\
&\leq \|d^a \tilde d^c_Ku\|_{L^{\frac{N}{N-1}}(V)}^{\theta_1q}
\|d^{a-1} \tilde d^c_Ku\|_{L^{1}(V)}^{\theta_2q},
\end{align*}
and therefore
\be
\|d^b \tilde d_K^c u\|_{L^q(V)}\leq
\|d^a \tilde d^c_Ku\|_{L^{\frac{N}{N-1}}(V)}+
\| d^{a-1} \tilde d^c_Ku\|_{L^{1}(V)}.
\label{holder}
\ee
By the $L^1$ Sobolev inequality and Lemma \ref{lem1} we have
\begin{align*}
\|d^a \tilde d^c_Ku\|_{L^{\frac{N}{N-1}}(V)}&\leq C
\left(|c|\int_Vd^a\tilde{d}_K^{c-1} |u|dx+|a|\int_Vd^{a-1}\tilde{d}_K^{c} |u|dx+\int_Vd^a\tilde{d}_K^{c} |\nabla u|dx\right)\\
&\leq C \int_Vd^a\tilde{d}_K^{c} |\nabla u|dx .
\end{align*}
Combining this with Lemma \ref{lem1} and  \eqref{holder}
concludes the proof. $\hfill\Box$

\begin{lemma}
\label{lem3}
Assume that $a\neq0$ and $c +a+k-1\neq0$.
Let $2<Q\leq\frac{2N}{N-2}$ and $b=a-1+N\frac{Q-2}{2Q}$.
If $\beta_0$ is small enough then there exists $C=C(c,a,k,\xb_0,Q,N)$
such that for any open $V\subset \xO\cap K_{\xb_0}$
and for any $v\in C^\infty_c(V)$ there holds
\[
\bigg(\int_V (d^b\tilde d_K^c)^{\frac{2Q}{Q+2}}|v|^Q dx
\bigg)^\frac{2}{Q}\leq  C \int_Vd^{2a-\frac{2Qb}{Q+2}}
\tilde d_K^{\frac{4c}{Q+2}} |\nabla v|^2dx .
\]
\end{lemma}
\begin{proof}
Let $s=\frac{Q}{2}+1$ and write $Q=qs$. Applying \eqref{eq:new}
to the function $u=|v|^s$ we obtain
\begin{align}
\left(\int_V\big(d^{b}\tilde d_K^{c}\big)^{\frac{2Q}{Q+2}}|v|^Qdx\right)^\frac{Q+2}{2Q}&\leq C \int_Vd^a\tilde{d}_K^{c} |v|^{\frac{Q}{2}}|\nabla v|  dx.
\label{5}
\end{align}
Now, by Schwarz inequality, we have
\begin{align*}
\int_Vd^a\tilde{d}_K^{c} |v|^{\frac{Q}{2}}|\nabla v| dx&=\int_V
d^{b{\frac{Q}{Q+2}}}\tilde{d}_K^{c{\frac{Q}{Q+2}}}|v|^{\frac{Q}{2}} \,
d^{a-b{\frac{Q}{Q+2}}}
\tilde{d}_K^{c(1-{\frac{Q}{Q+2}})} |\nabla v|dx\\
&\leq \left(\int_V\big(d^{b}\tilde d_K^{c}\big)^{\frac{2Q}{Q+2}}|v|^Qdx\right)^\frac{1}{2}\left(\int_Vd^{2a-\frac{2Qb}{Q+2}}\tilde d_K^{c(2-\frac{2Q}{Q+2})} |\nabla v|^2dx\right)^\frac{1}{2}.
\end{align*}
The result follows by \eqref{5} and the last inequality.
\end{proof}

\begin{corollary}
Let  $\xa \neq0$ and assume that $(\xa +\xg)
\frac{N-1}{N-2} +k-1\neq0$.
There exists $\xb_0$ small enough and $C>0$ such that for any open
$V\subset \xO\cap K_{\xb_0}$ and for all
$u\in C^\infty_c(V)$ there holds
\[
\left(\int_V \big(d^{\frac{\xa}{2}} \tilde d_K^{\frac{\xg}{2}}|u|\big)^{\frac{2N}{N-2}}dx\right)^\frac{N-2}{N}\leq C\int_Vd^{\xa}\tilde d_K^{\xg} |\nabla u|^2dx .
\]
\end{corollary}
\begin{proof}
We apply  Lemma \ref{lem3} with
$Q=\frac{2N}{N-2},$ $a=\xa\frac{N-1}{N-2},$ $c=\xg(\frac{N-1}{N-2})$.
\end{proof}

\begin{corollary}
Let  $\xa>0$ and $\xg \geq 0$. There exists
$\xb_0>0$ and $C>0$ such that for any open
$V\subset \xO\cap K_{\xb_0}$
and all $u\in C^\infty_c(V),$ the following inequality is valid
\[
\left(\int_V d^{\xa} \tilde d_K^{\xg}|u|^{\frac{2(N+\xa+\xg)}{N+\xa+\xg-2}}dx\right)^\frac{N+\xa+\xg-2}{N+\xa+\xg}\leq C\int_Vd^{\xa+\frac{2\xg}{N+a+\xg}}\tilde d_K^{\xg-\frac{2\xg}{N+a+\xg}} |\nabla u|^2dx.
\]
\label{hardysobolev2}
\end{corollary}

\begin{proof}
This follows by Lemma \ref{lem3} with
$Q=\frac{2(N+\xa+\xg)}{N+\xa+\xg-2},$ $c=\frac{\xg}{q}$,
$b=\frac{\xa}{q},$ where $q=\frac{2Q}{Q+2}$.
\end{proof}

\begin{corollary}\label{hardysobolev}
Let  $\xa>0$, $\xg <0$ and assume that
$\xa+\xg\frac{ N+\alpha -1}{N+\alpha}+ k-1\neq 0$.
There exists $\xb_0>0$ and $C>0$ such that for any open
$V\subset \xO\cap K_{\xb_0}$ and all $u\in C^\infty_c(V)$
there holds
\[
\left(\int_V d^{\xa} \tilde d_K^{\xg}|u|^{\frac{2(N+\xa)}{N+\xa-2}}dx\right)^\frac{N+\xa-2}{N+\xa}\leq C\int_Vd^{\xa}\tilde d_K^{\xg\frac{N+\xa-2}{N+\xa}} |\nabla u|^2dx \, .
\]
\end{corollary}
\begin{proof}
The proof follows from Lemma \ref{lem3}, with $Q=\frac{2(N+\xa)}{N+\xa-2},$
$c=\xg\frac{ N+\alpha -1}{N+\alpha} $ and $b=\xa\frac{ N+\alpha -1}{N+\alpha}$.
\end{proof}

\section{Heat Kernel Estimates for small time}

We are now going to introduce some notation and tools that will be useful for our local analysis near $K$ and $\partial\xO$; see e.g. \cite{Kuf}.

Let $x =(x',x'')\in \R^N$, $x'=(x_1,..,x_k) \in \R^k$,  $x''=(x_{k+1},...,x_N) \in \R^{N-k}$. For $\beta>0$, we denote by $B_{\beta}^k(x')$ the ball  in $\R^k$ with center $x'$ and radius $\beta$.
For any $\xi\in K$ we also set
\[
V_K(\xi,\xb)=
\Big\{x=(x',x''): |x''-\xi''|<\beta,\; |x_i-\Gamma_{i,K}^\xi(x'')|<\xb,\;\forall i=1,...,k\Big\},
\]
for some functions $\Gamma_{i,K}^\xi: \R^{N-k} \to \R$, $i=1,...,k$.

Since $K$ is a $C^2$ compact submanifold in $\mathbb{R}^N$ without boundary,
there exists $\xb_0>0$ such that

\begin{itemize}
\item For any $x\in K_{6\beta_0}$, there is a unique $\xi \in K$  satisfying
$|x-\xi|=d_K(x)$.

\item $d_K \in C^2(K_{4\beta_0})$, $|\nabla d_K|=1$ in $K_{4\beta_0}$ and there exists $g\in L^\infty(K_{4\beta_0})$ such that
\[
\Delta d_K(x)=\frac{k-1}{d_K(x)}+g(x) , \qquad \text{in } K_{4\beta_0} .
\]
(See \cite[Lemma 2.2]{Vbook} and \cite[Lemma 6.2]{DN}.)

\item For any $\xi \in K$, there exist $C^2$ functions $\Gamma_{i,K}^\xi \in C^2(\R^{N-k};\R)$, $i=1,...,k$, such that defining
\[
V_K(\xi,\xb):=
\Big\{x=(x',x''): |x''-\xi''|<\beta,\; |x_i-\Gamma_{i,K}^\xi(x'')|<\xb,\;
 i=1,...,k\Big\},
\]
we have (upon relabelling and reorienting the coordinate axes if necessary)
\[
V_K(\xi,\beta) \cap K=
\Big\{x=(x',x''): |x''-\xi''|<\beta,\;
x_i=\Gamma_{i,K}^\xi (x''), \;  i=1,...,k\Big\}.
\]

\item There exist $\xi^{j}$, $j=1,...,m_0$, ($ m_0 \in \N$) and $\beta_1 \in (0, \beta_0)$ such that
\be \label{cover}
K_{2\xb_1}\subset \bigcup_{i=1}^{m_0} V_K(\xi^i,\beta_0).
\ee
\end{itemize}

Now set
\[
\xd_K^\xi(x):=\Big(\sum_{i=1}^k|x_i-\Gamma_{i,K}^\xi(x'')|^2\Big)^{\frac{1}{2}}, \qquad x=(x',x'')\in V_K(\xi,4\beta_0).
\]
Then there exists a constant $C=C(N,K)$ such that
\be\label{propdist}
d_K(x)\leq	\xd_K^{\xi}(x)\leq C \| K \|_{C^2} d_K(x),\quad \forall x\in V_K(\xi,2\beta_0),
\ee
where $\xi^j=((\xi^j)', (\xi^j)'') \in K$, $j=1,...,m_0$, are the points in \eqref{cover} and
\[
\| K \|_{C^2}:=\sup\{  \| \Gamma_{i,K}^{\xi^j} \|_{C^2(B_{5\beta_0}^{N-k}((\xi^j)''))}: \; i=1,...,k, \;j=1,...,m_0 \} < \infty.
\]
For simplicity we shall write $\xd_K$ instead of $\xd_K^{\xi}$.
Moreover, $\beta_1$ can be chosen small enough so that for any $x \in K_{\beta_1}$,
\[
B(x,\beta_1) \subset V_K(\xi,\beta_0),
\]
where $\xi \in K$ satisfies $|x-\xi|=d_K(x)$.

When $K=\partial\xO$ we assume that

\[
V_{\partial\xO}(\xi,\beta) \cap \xO=\Big\{x: \sum_{i=2}^N|x_i-\xi_i|^2<\beta^2,\;
0< x_1 -\Gamma_{1,\partial\xO}^\xi (x_2,...,x_N) <\beta \Big\}.
\]
Thus, when $x\in K\subset\partial\xO$ is a $C^2$ compact submanifold in $\mathbb{R}^N$ without boundary, of co-dimension $k$, $1< k \leq N,$ we have that

\be
\xG_{1,K}^\xi(x'')=\xG_{1,\partial\xO}^\xi(\xG_{2,K}^\xi(x''),...,\xG_{k,K}^\xi(x''),x'').\label{dist3}
\ee

Let $\xi\in K$. For any $x\in V_K(\xi,\xb_0)\cap \xO,$ we define
\[
\xd(x)=x_1-\Gamma_{1,\partial\xO}^\xi (x_2,...,x_N),
\]
and
$$\xd_{2,K}(x)=\Big(\sum_{i=2}^k|x_i-\Gamma_{i,K}^\xi(x'')|^2 \Big)^{\frac{1}{2}} .
$$
Then by \eqref{dist3}, there exists a constant $A>1$ which depends only on
$\xO $, $K$ and $\beta_0$ such that

\be
\frac{1}{A}(\xd_{2,K}(x)+\xd(x))\leq\xd_K(x)\leq A(\xd_{2,K}(x)+\xd(x)),\label{propdist2}
\ee
Thus by \eqref{propdist} and \eqref{propdist2} there exists a constant $C=C(\xO,K,\xg)>1$ which depends on $k, N,\Gamma_{i,K}^\xi,\Gamma_{1,\partial\xO}^\xi,\xg $ such that

\be
C^{-1}\xd^2(x)(\xd_{2,K}(x)+\xd(x))^\xg\leq d^2(x)d_K^\xg(x)\leq C\xd^2(x)(\xd_{2,K}(x)+\xd(x))^\xg.\label{ddK}
\ee
We set
\begin{align} \nonumber
\mathcal{V}_K(\xi,\xb_0)=\left\{(x',x''): |x''-\xi''|<\beta_0,\; |\xd(x)|<\xb_0,\;
|\xd_{2,K}(x)|<\xb_0\right\}.
\end{align}
We may then assume that
\begin{align*}
\mathcal{V}_K(\xi,\xb_0)\cap\xO=\left\{(x',x''): |x''-\xi''|<\beta_0, 0<\xd(x)<\xb_0,\; |\xd_{2,K}(x)|<\xb_0\right\},
\end{align*}

\begin{align*}
\mathcal{V}_K(\xi,\xb_0)\cap\partial\xO=\left\{(x',x''): |x''-\xi''|<\beta_0, \xd(x)=0,\; |\xd_{2,K}(x)|<\xb_0\right\},
\end{align*}
and

\begin{align*}
\mathcal{V}_K(\xi,\xb_0)\cap K=\left\{(x',x''): |x''-\xi''|<\beta_0, \xd(x)=0,\; \xd_{2,K}=0\right\}.
\end{align*}

Let $\xb_1>0$,  $1<\xg<2,$ and $0<r<\xb_1.$ For any $x\in
V_{\partial\Omega}(\xi,\frac{\xb_0}{16})$ with $d(x)\leq \xg r,$.
Taking $\beta_1$ small enough we have
$$
\mathcal{D}(x,r):=\Big\{y: \sum_{i=2}^N|y_i-x_i|^2<r^2,\; |\xd(y)|<r+d(x)
\Big\}\subset\subset V_{\partial\xO}(\xi,\frac{\xb_0}{16}).
$$
and there exist $C_{\xi}=C(\xG^\xi,\xO)>1,$ such that
\be
\mathcal{D}(x,r)\subset B(x, C_\xi r).\label{conddk}
\ee
Also,
$$
\mathcal{D}(x,r)\cap\xO=\{y: \sum_{i=2}^N|y_i-x_i|^2<r^2,\; 0<\xd(y)<r+d(x)\}.
$$

\begin{definition}\label{beta1}
Let $\xb_1>0$ be small enough, $r\in(0,\xb_1),$ $b\in (1,2),$ $\xi\in K$ and $x\in V(\xi,\frac{\xb_0}{16}).$ We define
\begin{eqnarray*}
&\mbox{ {\rm (i)}} & \mathcal{B}(x,r)= B(x,r), \mbox{ if } d(x)>b r \\
&\mbox{ {\rm (ii)}} & \mathcal{B}(x,r)= \mathcal{D}(x,r), \mbox{ if $d(x)\leq b r$ and } d_K(x)>b C_\xi r \\
&\mbox{ {\rm (iii)}} &  \mathcal{B}(x,r)=\{y=(y',y''): |y''-x''|< r,\; |\xd_{2,K}(y)|<r+d_K(x),\;|\xd(y)|<r+d(x)\}, \\
&& \mbox{if $d(x)\leq b r$ and $d_K(x)\leq b C_\xi r$.}
\end{eqnarray*}

Finally, we set
$$\overline{\mathcal{M}}(x,r)=\int_{\mathcal{B}(x,r)\cap\xO}d^2(y)d^\xg_K(y)dy.$$
\end{definition}

\subsection{Doubling Property}

\begin{lemma}
Let $\xg\geq -k$. Let $\xi\in\partial\Omega$
and $x\in V(\xi,\frac{\xb_0}{16}).$ Then, there exist $\xb_1>0$ and $C=C(\xO,K,\gamma,\xb_0)>1$ such that
\begin{equation}
\frac{1}{C}(r+d(x))^2(r+d_K(x))^\xg r^N\leq\overline{\mathcal{ M}}(x,r)\leq C(r+d(x))^2(r+d_K(x))^\xg r^N,
\label{doublingsxes}
\end{equation}
for any $0<r<\xb_1.$
\label{doubling}
\end{lemma}
\begin{proof}
We will consider three cases.

\minsk

\noindent
\textbf{Case 1. $d(x)>b r$}  Since $d_K(x)\geq d(x),$ we can easily show that for any $y\in B(x,r)$ we have
$\frac{b-1}{b}d(x)\leq d(y)\leq \frac{b+1}{b}d(x)$ and
$\frac{b-1}{b}d_K(x)\leq d_K(y)\leq \frac{b+1}{b}d_K(x)$.
 Thus the proof of \eqref{doublingsxes} follows easily in this case.

\minsk

\noindent
\textbf{Case 2. $d(x)\leq b r$ and $d_K(x)>b C_\xi r.$} By \eqref{conddk}, we again have that $\frac{b-1}{b} d_K(x)\leq d_K(y)\leq \frac{b+1}{b} d_K(x).$ Using the last inequality and proceeding as the proof of \cite[Lemma 2.2]{FMT2}, we obtain the desired result.

\minsk

\noindent
\textbf{Case 3. $d(x)\leq b r$ and $d_K(x)\leq b C_\xi r.$}

Let $\overline{y}=(y_2,...,y_k)\in \mathbb{R}^{k-1}.$ By \eqref{ddK} and the definition of $\mathcal{B}(x,r)$ , we have
\begin{align}\nonumber
\overline{\mathcal{M}}(x,r)&=\int_{\mathcal{B}(x,r)\cap\xO}d^2(y)d^\xg_K(y)dy\leq \int_{\mathcal{B}(x,r)\cap\xO}C\xd^2(y)(\xd_{2,K}(y)+\xd(y))^\xg dy\\ \nonumber
&\leq C\int_{B^{N-k}(x'',r)}\int_{0}^{d(x)+r}\int_{|\overline{y}|<d_K(x)+r}(|\overline{y}|+y_1)^\xg y_1^2d\overline{y} \, dy_1 \, dy''\\
&=C C(k,N) r^{N-k}\int_{0}^{d(x)+r}\int_{0}^{d_K(x)+r}s^{k-2}(s+y_1)^\xg y_1^2ds \,
dy_1.\label{1}
\end{align}

Now, if $\xg>0$ then
\begin{align*}
\int_{0}^{d(x)+r}&\int_{0}^{d_K(x)+r}s^{k-2}(s+y_1)^\xg y_1^2ds\, dy_1\\
&\leq \frac{1}{k-1}(2r +d(x)+d_K(x))^\xg (d_K(x)+r)^{k-1}(d(x)+r)^3\\
&\leq \frac{(b+2)^\xg (bC_\xi+1)^{k-1}(b+1)}{k-1}(r +d_K(x))^\xg (d(x)+r)^2r^k.
\end{align*}

If $-k\leq\xg\leq0,$ then
\begin{align*}
\int_{0}^{d(x)+r}&\int_{0}^{d_K(x)+r}s^{k-2}(s+y_1)^\xg y_1^2ds\, dy_1\\
&\leq \int_{0}^{d(x)+r}\int_{0}^{d_K(x)+r}s^{k-2}(s+y_1)^{\xg+2}ds\, dy_1\\
&\leq \int_{0}^{d(x)+r}\int_{0}^{d_K(x)+r} (s+y_1)^{\xg+k}ds\, dy_1\\
&\leq (d_K(x)+r)(d(x)+r) (2r +d(x)+d_K(x))^{\xg+k} \\
&\leq(2C_\xi+2) (d(x)+r)^2 (d_K(x)+r)^\xg (2r +d(x)+d_K(x))^{k}\\
&\leq (2C_\xi+2)(2+b+bC_\xi)^{k} (d(x)+r)^2 (d_K(x)+r)^\xg r^k.
\end{align*}

Similarly, for the opposite inequality, we have

\begin{align}\nonumber
\int_{0}^{d(x)+r}&\int_{0}^{d_K(x)+r}s^{k-2}(s+y_1)^\xg y_1^2ds\, dy_1\\ \nonumber
&\geq \int_{\frac{d(x)+r}{2}}^{d(x)+r}\int_{\frac{d_K(x)+r}{2}}^{d_K(x)+r}s^{k-2}(s+y_1)^\xg y_1^2dsdy_1\\
&\geq C(b,C_\xi,k,\xg)(d(x)+r)^2 (d_K(x)+r)^\xg r^k.\label{4}
\end{align}
The desired result follows by \eqref{1}-\eqref{4}.
\end{proof}

From \eqref{eigenest} and Lemma \ref{doubling}, we have the following corollary.

\begin{corollary}
Let  $x\in V(\xi,\frac{\xb_0}{16})$ and
$$
\mathcal{M}(x,r)=\int_{\mathcal{B}(x,r)\cap\xO}\ei^2(y)dy.
$$
Then, there exist $\xb_1>0$ and $C=C(\xO,K,\xb_0)>1$ such that
\[
\frac{1}{C}(r+d(x))^2(r+d_K(x))^{2\xg_+} r^N\leq\mathcal{ M}(x,r)\leq C(r+d(x))^2(r+d_K(x))^{2\xg_+} r^N,
\]
for any $0<r<\xb_1.$\label{doublingcoro}
\end{corollary}

\subsection{Density of $C^\infty_c(\xO)$ functions}

\begin{lemma}\label{densityflat}
Let $k\leq N$, $\xg\geq -k,$ $x=(x_1,x_2,...,x_k,x_{k+1},...,x_N)=(x_1,\overline{x},x'')$.
Let
$$
O=(0,1)\times B^{\mathbb{R}^{k-1}}(0,1)\times B^{\mathbb{R}^{N-k}}(0,1)
$$
and $u\in H^1(O ; x_1^2(x_1+|\overline{x}|)^\xg )$.
Assume that there exists $0<\xe_0<1$ such that  $u(x)=0$ if either
$x_1>\xe_0$ or $|\overline{x}|^2+|x''|^2>\xe_0^2$. Then there exists a sequence $\{u_n\}_{n=1}^\infty\subset C^\infty_c(O)$ such that
\[
u_n\rightarrow u,\quad\text{in}\;H^1(O ; x_1^2(x_1+|\overline{x}|)^\xg )
\]
\end{lemma}
\begin{proof}
Let $m\in\mathbb{N}$.
Set $$v_m(x)=\left\{ \BAL &m, \qquad&& \text{if }u(x)>m, \\
&u(x), &&\text{if } -m\leq u(x)\leq m,\\
&-m&&\text{if }  u(x)<-m.
\EAL \right.$$
Then we can easily prove that $v_m\rightarrow u$ in
$H^1(O; x_1^2(x_1+|\overline{x}|)^\xg )$.

Let $\xe>0$. There exists $m_0\in \mathbb{N},$ such that
\be
\norm{v_{m_0}-u}_{H^1(O;x_1^2(x_1+|\overline{x}|)^\xg )}=\left(\int_{O}x_1^2(x_1+|\overline{x}|)^\xg(|v_{m_0}-u|^2+|\nabla v_{m_0}-\nabla u|^2)dx\right)^{\frac{1}{2}}<\frac{\xe}{3}.\label{densityeq1}
\ee
For any $0<h<1,$ we consider the function
\[
\eta_h(x_1)=\left\{ \BAL &1 \qquad&& \text{if }x_1>h, \\
&1-(\ln h)^{-1}\ln\left(\frac{x_1}{h}\right) &&\text{if } h^2\leq x_1\leq h,\\
&0&&\text{if }  x_1<h^2,
\EAL \right.
\]
We will show that $z_{h}:=\eta_hv_{m_0}\rightarrow v_{m_0}$ in $H^1(O;x_1^2(x_1+|\overline{x}|)^\xg )$, as $h\rightarrow0^+.$ We can easily show that $z_{h}\rightarrow v_{m_0}$ in $L^2(O ; x_1^2(x_1+|\overline{x}|)^\xg )$. Also,
\bal
\BAL
\int_{O}x_1^2(x_1+|\overline{x}|)^\xg|\nabla (v_{m_0}(1-\eta_h))|^2dx \leq & \; 2\int_{O}x_1^2(x_1+|\overline{x}|)^\xg| \nabla v_{m_0}|^2|(1-\eta_h)|^2dx\\
& +2\int_{O}x_1^2(x_1+|\overline{x}|)^\xg| v_{m_0}|^2|\nabla\eta_h|^2dx\\
\leq & \; 2\int_{O}x_1^2(x_1+|\overline{x}|)^\xg| \nabla v_{m_0}|^2|(1-\eta_h)|^2dx\\
& + C(N,k)m_0^2(\ln h)^{-2}\int_{h^2}^h\int_0^1(x_1+r)^\xg r^{k-2}drdx_1\rightarrow0,
\EAL
\eal
since $\xg\geq -k.$ Thus there exists $h_0\in (0,1)$ such that
\be
\norm{v_{m_0}-z_{h_0}}_{H^1(O ; x_1^2(x_1+|\overline{x}|)^\xg )}<\frac{\xe}{3}.\label{densityeq2}
\ee
Note that $z_{h_0}$ vanishes outside $\tilde{O}_{\xs}=(\xs,1)\times B^{\mathbb{R}^{k-1}}(0,1)\times B^{\mathbb{R}^{N-k}}(0,1),$  for some $\xs=\xs(h_0)\in(0,1).$ Thus $z_{h_0}\in H^1_0(\tilde{O}_{\xs}),$ which implies the existence of a sequence
$\{u_n\}\subset C_c^\infty(\tilde{O}_\xs)$ such that
$u_n\rightarrow z_{h_0}$ in $H^1_0(\tilde{O}_{\xs})$. Hence, there exists $n_0\in \mathbb{N}$ such that
\be
\norm{z_{h_0}-u_{n}}_{H^1(O;x_1^2(x_1+|\overline{x}|)^\xg )}<\frac{\xe}{3},\quad \forall n\geq n_0.\label{densityeq3}
\ee
The desired result follows by \eqref{densityeq1}, \eqref{densityeq2} and \eqref{densityeq3}.
\hfill$\Box$

We write a point $x\in\R^N$ as $x=(x_1,x_2,...,x_k,x_{k+1},...,x_N)=(x_1,\overline{x},x'')$.
Given $r_1,r_2,r_3>0$ we denote
$$
O_{r_1,r_2,r_3}=(0,r_1)\times B^{\mathbb{R}^{k-1}}(0,r_2)\times B^{\mathbb{R}^{N-k}}(0,r_3).
$$
\begin{theorem}\label{density}
Assume that $\xg\geq -k$. Then $C^\infty_c(\xO)$ is dense in
$H^1(\xO; d^2d_K^\xg ).$
\end{theorem}
\begin{proof}
Let $u\in H^1(\xO; d^2d_K^\xg )$ and $\xb_0>0$ be the constant in Lemma \ref{doubling}. Let $\xi\in K$ and $0\leq\xf_\xi\leq1$ be a smooth function with $\text{supp}(\xf_\xi)\subset\mathcal{V}_K(\xi,\frac{\xb_0}{8}),$ and $\xf=1$ in $\mathcal{V}_K(\xi,\frac{\xb_0}{16})$. Then the function
$v= u\xf_\xi$ belongs in $H^1(\xO; d^2d_K^\xg )$.

By \eqref{ddK} we have
\begin{align*}
\int_{\xO}d^2(x)d_K^\xg(x)(|v|^2+|\nabla v|^2)dx&\asymp C(\xO,K)\int_{\mathcal{V}_K(\xi,\frac{\xb_0}{8})}\xd^2(x)(\xd_{2,K}(x)+\xd(x))^\xg (|v|^2+|\nabla v|^2)dx\\
&\asymp C(\xO,K)\int_{O_{1, \frac{\xb_0}{8}, \frac{\xb_0}{8}}}y_1^2(y_1+|\overline{y}|)^\xg(|\overline{v}|^2+|\nabla_y \overline{v}|^2)dy,
\end{align*}
where $\overline{y}=(y_2,...,y_k)$ and
\bal
\overline{v}(y)=
v\left(y_1+\xG^\xi_{1,\partial\xO}\left(y_2+\xG^\xi_{2,K}(y''),...,y_k+\xG^\xi_{k,K}(y''),y'' \right),y_2+\xG^\xi_{2,K}(y''),...,y_k+\xG^\xi_{k,K}(y''),y''\right).
\eal
The desired result follows by Lemma \ref{densityflat} and a partition of unity argument.
\end{proof}

By Corollaries \ref{hardysobolev2} and \ref{hardysobolev}, Theorem \ref{density} and using a partition of unity argument, we obtain the following two results.
\begin{corollary}
\label{hardysobolev3}
Let $\xg\geq0$. There exists $C=C(\xO,K,\xg)$ such that
\[
\left(\int_\xO d^{2} d_K^{\xg}|u|^{\frac{2(N+2+\xg)}{N+\xg}}dx\right)^\frac{N+\xg}{N+2+\xg}\leq C\left(\int_\xO d^{2}d_K^{\xg} |\nabla u|^2dx+\int_\xO d^{2}d_K^{\xg} u^2 dx\right),
\]
for any $u\in H^1(\xO ;d^{2}d_K^{\xg} )$.
\end{corollary}

\begin{corollary}\label{hardysobolev4}
  Let $-k\leq \xg<0$. There exists $C=C(\xO,K,\xg)$ such that
\[
\left(\int_\xO d^{2} d_K^{\xg}|u|^{\frac{2(N+2)}{N}}dx\right)^\frac{N}{N+2}\leq C\left(\int_\xO d^{2}d_K^{\xg\frac{N}{N+2}} |\nabla u|^2dx+\int_\xO d^{2}d_K^{\xg} u^2 dx\right),
\]
for any $u\in H^1(\xO; d^{2}d_K^{\xg} )$.
\end{corollary}

\subsection{Poincar\'e inequality}

\begin{lemma}\label{poincareflat}
Let $1\leq k\leq N$ and $\xg\geq -k$.
Assume that $0<c_0r_2<r_3<r_1<r_2,$ for some constant $0<c_0<1.$
Then there exists a positive constant $C=C(c_0,N,K,\xg)$ such that
\[
\inf_{\xz\in\mathbb{R}}\int_{O_{r_1,r_2,r_3}}|f(x)-\xz|^2x_1^2(x_1+|\overline{x}|)^\xg dx\leq Cr^2_2\int_{O_{r_1,r_2,r_3}}|\nabla f(x)|^2x_1^2(x_1+|\overline{x}|)^\xg dx,
\]
for any $f\in C^1(\overline{O}_{r_1,r_2,r_3}).$
\end{lemma}
\begin{proof}
Let $\xz\in \mathbb{R}$ and $y_1=\frac{x_1}{2r_1},$ $\overline{y}=\frac{\overline{x}}{2r_2}$
and $y''=\frac{x''}{2r_3}.$ Set $\overline{f}(y)=f(2r_1y_1,2r_2\overline{y},2r_3y'').$ Then

\begin{align}\nonumber
\int_{O_{r_1,r_2,r_3}}&|f(x)-\xz|^2x_1^2(x_1+|\overline{x}|)^\xg dx\\
&\asymp C(c_0,N,k,\xg) r_2^{N+\xg+2}\int_{O_{\frac{1}{2},\frac{1}{2},\frac{1}{2}}}|\overline{f}(y)-\xz|^2y_1^2(y_1+|\overline{y}|)^\xg dy.\label{poiflat1}
\end{align}
Let $$\xz_{\overline{f}}=\Big(\int_{O_{\frac{1}{2},\frac{1}{2},\frac{1}{2}}}y_1^2(y_1+|\overline{y}|)^\xg dy\Big)^{-1}\int_{O_{\frac{1}{2},\frac{1}{2},\frac{1}{2}}}\overline{f}(y)y_1^2(y_1+|\overline{y}|)^\xg dy.$$
We assert that there exists a positive constant $C>0$ such that
\begin{align}
\int_{O_{\frac{1}{2},\frac{1}{2},\frac{1}{2}}}|\overline{f}(y)-\xz_{\overline{f}}|^2y_1^2(y_1+|\overline{y}|)^\xg dy\leq C\int_{O_{\frac{1}{2},\frac{1}{2},\frac{1}{2}}}|\nabla\overline{f}(y)|^2y_1^2(y_1+|\overline{y}|)^\xg dy,\label{poiflat2}
\end{align}
for any $\overline{f}\in C^1(\overline{O}_{\frac{1}{2},\frac{1}{2},\frac{1}{2}}).$

We will prove this by contradiction. Let $\{\overline{f}_n\}\subset C^1(\overline{O}_{\frac{1}{2},\frac{1}{2},\frac{1}{2}})$ be a sequence such that
\begin{align}
\int_{O_{\frac{1}{2},\frac{1}{2},\frac{1}{2}}}|\overline{f}_n(y)-\xz_{\overline{f}_n}|^2y_1^2(y_1+|\overline{y}|)^\xg dy> n\int_{O_{\frac{1}{2},\frac{1}{2},\frac{1}{2}}}|\nabla\overline{f}_n(y)|^2y_1^2(y_1+|\overline{y}|)^\xg dy.\label{poiflat3}
\end{align}
Setting
$$
g_n(y)=(\overline{f}_n(y)-\xz_{\overline{f}_n})\Big(\int_{O_{\frac{1}{2},\frac{1}{2},\frac{1}{2}}}|\overline{f}_n(y)-\xz_{\overline{f}_n}|^2y_1^2(y_1+|\overline{y}|)^\xg dy\Big)^{-1},
$$
\eqref{poiflat3} becomes
\begin{align*}
1=\int_{O_{\frac{1}{2},\frac{1}{2},\frac{1}{2}}}|g_n(y)|^2y_1^2(y_1+|\overline{y}|)^\xg dy> n\int_{O_{\frac{1}{2},\frac{1}{2},\frac{1}{2}}}|\nabla g_n(y)|^2y_1^2(y_1+|\overline{y}|)^\xg dy
\end{align*}
and we also have $\xz_{g_n}=0.$

Let $\xe>0$. There exists an extension $\overline{g}_n$ of $g_n$ such that
$\overline{g}_n=g_n$ in $\overline{O}_{\frac{1}{2},\frac{1}{2},\frac{1}{2}},$ $\overline{g}_n\in C^1(\overline{O}_{1,1,1}),$  $\overline{g}_n=0$ if
$y_1>\frac{2}{3}$ or $|\overline{y}|>\frac{2}{3}$ or $|y''|>\frac{2}{3}$ and there exists a positive constant $C_1=C_1(N,k,q)$ such that
\begin{align*}
\int_{O_{1,1,1}}|\overline{g}_n(y)|^qy_1^2(y_1+|\overline{y}|)^\xg dy \leq \; & C_1 \int_{O_{\frac{1}{2},\frac{1}{2},\frac{1}{2}}}|g_n(y)|^qy_1^2(y_1+|\overline{y}|)^\xg dy\\
\int_{O_{1,1,1}}|\nabla\overline{g}_n(y)|^qy_1^2(y_1+|\overline{y}|)^\xg dy \leq \; & C_1
\bigg( \int_{O_{\frac{1}{2},\frac{1}{2},\frac{1}{2}}}|\nabla g_n(y)|^qy_1^2(y_1+|\overline{y}|)^\xg dy\\
& +\int_{O_{\frac{1}{2},\frac{1}{2},\frac{1}{2}}}|g_n(y)|^qy_1^2(y_1+|\overline{y}|)^\xg dy\bigg),
\end{align*}
for any $q>1.$
Assume first that $-k\leq\xg<0$. Given
$\sigma\in (0,1/2)$, by Corollary \ref{hardysobolev} we have
that for some $C=C(\gamma,N,k)$,
\begin{align}\nonumber
&\int_{O_{\xs,\frac{1}{2},\frac{1}{2}}}|g_n(y)|^2y_1^2(y_1+|\overline{y}|)^\xg dy\\ \nonumber
&\leq C\xs^{\frac{6}{N+2}}\Big(\int_{O_{\frac{1}{2},\frac{1}{2},\frac{1}{2}}}|\overline{g}_n(y)|^\frac{2(N+2)}{N}y_1^2(y_1+|\overline{y}|)^\xg dy\Big)^{\frac{N}{N+2}}\\ \nonumber
&\leq C \xs^{\frac{6}{N+2}}\Big(\int_{O_{1,1,1}}|\overline{g}_n(y)|^\frac{2(N+2)}{N}y_1^2(y_1+|\overline{y}|)^\xg dy\Big)^{\frac{N}{N+2}}\\ \nonumber
&\leq C \xs^{\frac{6}{N+2}}\int_{O_{1,1,1}}|\nabla\overline{g}_n(y)|^2y_1^2(y_1+|\overline{y}|)^\xg dy\\ \nonumber
&\leq C \xs^{\frac{6}{N+2}}\Big(\int_{O_{\frac{1}{2},\frac{1}{2},\frac{1}{2}}} \hspace{-.6cm}
|\nabla g_n(y)|^2 y_1^2(y_1+|\overline{y}|)^\xg dy+\int_{O_{\frac{1}{2},\frac{1}{2},\frac{1}{2}}}|g_n(y)|^2y_1^2(y_1+|\overline{y}|)^\xg dy\Big)\\
& \leq C \xs^{\frac{6}{N+2}}(1+\frac{1}{n})\label{poiflat5}.
\end{align}
Similarly in case $\xg\geq 0$, by Corollary \ref{hardysobolev2} we can show
\begin{align}
\int_{O_{\xs,\frac{1}{2},\frac{1}{2}}}|g_n(y)|^2y_1^2(y_1+|\overline{y}|)^\xg dy\leq C(\xg,N,k)\xs^{\frac{2(3+\gamma)}{N+2+\xg}}(1+\frac{1}{n})\label{poiflat5b}
\end{align}
Since $(g_n)$ is bounded in
$H^1((\xs,\frac{1}{2})\times B^{\mathbb{R}^{k-1}}(0,\frac{1}{2})\times
B^{\mathbb{R}^{N-k}}(0,\frac{1}{2}))$
uniformly in $\xs\in (0,\frac{1}{2})$,
by \eqref{poiflat5} and \eqref{poiflat5b}, we can easily show that
there exists a subsequence $(g_{n_k})$ such that $g_{n_k}\rightarrow g$ in
$L^2(O_{\frac{1}{2},\frac{1}{2},\frac{1}{2}} ; y_1^2(y_1+|\overline{y}|)^\xg )$.

But
$$\lim_{n\rightarrow\infty}\int_{O_{\frac{1}{2},\frac{1}{2},\frac{1}{2}}}|\nabla g_n(y)|^2y_1^2(y_1+|\overline{y}|)^\xg dy=0,$$
which implies that $\nabla g=0$ a.e. in $O_{\frac{1}{2},\frac{1}{2},\frac{1}{2}}$.
Hence there exists constant $c$ such that $g=c$ a.e. in $O_{\frac{1}{2},\frac{1}{2},\frac{1}{2}}.$ But $\xz_{g_{n_k}}=0$ and $g_{n_k}\rightarrow g$ in $L^2(O_{\frac{1}{2},\frac{1}{2},\frac{1}{2}}),$ thus
$c=0,$ which is clearly a contradiction since $$\int_{O_{\frac{1}{2},\frac{1}{2},\frac{1}{2}}}| g(y)|^2y_1^2(y_1+|\overline{y}|)^\xg dy=1.$$
Since
\begin{align}
\int_{O_{\frac{1}{2},\frac{1}{2},\frac{1}{2}}}|\nabla\overline{f}(y)|^2y_1^2(y_1+|\overline{y}|)^\xg dy
\asymp C(N,k,\xg)\int_{O_{r_1,r_2,r_3}}r^{-N-\xg}|\nabla f(x)|^2x_1^2(x_1+|\overline{x}|)^\xg dx,\label{poiflat6}
\end{align}
the result follows by \eqref{poiflat1}, \eqref{poiflat2} and \eqref{poiflat6}.
\end{proof}

\begin{theorem}
Assume that $\xg\geq -k$.
Let $\xi\in K$, $x\in V(\xi,\frac{\xb_0}{16})$ and let
$\xb_1$ be the constant in Lemma \ref{doubling}.
Then there exists a positive constant
$C=C(C_\xi,\xO,K,\xg,b)>0$ such that
\be
\inf_{\xz\in\mathbb{R}}\int_{\mathcal{B}(x,r)\cap\xO}|f(y)-\xz|^2d^2(y)d^\xg_K(y)dy\leq Cr^2\int_{\mathcal{B}(x,r)\cap\xO}|\nabla f(y)|^2d^2(y)d^\xg_K(y)dy,\label{poincaresx}
\ee
for any $0<r<\xb_1$ and
$f\in C^1(\overline{\mathcal{B}(x,r)\cap\xO})$.
\label{thm:3.9}
\end{theorem}
\begin{proof}

\minsk

\noindent
\textbf{Case 1. $d(x)\geq b r$} Since $d_K(x)\geq d(x),$ we can easily show that for any $y\in B(x,r)$
$\frac{b-1}{b}d(x)\leq d(y)\leq \frac{b+1}{b}d(x)$
and $\frac{ b-1}{ b}d(x)\leq d_K(y)\leq \frac{ b+1}{ b}d_K(x)$.
Thus the proof of \eqref{poincaresx} follows easily in this case.

\minsk

\noindent\textbf{Case 2. $d(x)\leq b r$ and $d_K(x)>b C_\xi r.$} By \eqref{conddk}, we again have that $\frac{ b-1}{b}d_K(x)\leq d_K(y)\leq \frac{ b+1}{ b}d_K(x)$. Using the last inequality and proceeding as the proof of \cite[Theorem 2.5]{FMT2}, we obtain the desired result.

\minsk

\noindent\textbf{Case 3.  $d(x)\leq b r$ and $d_K(x)\leq b C_\xi r.$}
By \eqref{ddK}, it is enough to prove the following inequality
\[
\inf_{\zeta\in\mathbb{R}}\int_{\mathcal{B}(x,r)\cap\xO}|f-\zeta|^2\xd^2  (\xd_{2,K}  +\xd )^\xg dy
\leq Cr^2\int_{\mathcal{B}(x,r)\cap\xO}|\nabla f|^2\xd^2(\xd_{2,K}+\xd)^\xg dy.
\]
This is a consequence of Lemma \ref{poincareflat}.
\end{proof}

By \eqref{eigenest} and the above theorem, we can easily prove the following result.

\begin{corollary}
Let $\mu\leq k^2/4$ and let $\xb_1$ be the constant in Lemma
\ref{doubling}. Then there exists a constant
$C=C(\xO,K,\xg,b)>0$ such that
for any $0<r<\xb_1$ any $f\in C^1(\overline{\mathcal{B}(x,r)\cap\xO})$
and all $x\in\Omega$ there holds
\[
\inf_{\xz\in\mathbb{R}}\int_{\mathcal{B}(x,r)\cap\xO}|f(y)-\xz|^2\ei^2(y)dy\leq Cr^2\int_{\mathcal{B}(x,r)\cap\xO}|\nabla f(y)|^2 \ei^2(y)dy \, .
\]
\end{corollary}
\begin{proof}
If $\dist(x,K)< \beta_0/16$ the result follows from Theorem
\ref{thm:3.9}. In case $\dist(x,K)> \beta_0/16$ the result is well known.
\end{proof}

In view of the proof of Lemma \ref{poincareflat}, Corollaries \ref{hardysobolev3} and \ref{hardysobolev4} and \eqref{eigenest},  we can prove the following Poincar\'e inequality in $\xO.$

\begin{theorem}
Let  $\mu\leq k^2/4$.  There exists a positive constant $C=C(\xO,K,\mu)$ such that
\be
\inf_{\xz\in\mathbb{R}}\int_{\xO}|f(y)-\xz|^2\ei^2(y)dy\leq C\int_{\xO}|\nabla f(y)|^2\ei^2(y)dy,
\label{poincaresxstoomega}
\ee
for any $f\in C^1(\overline{\xO}).$
\end{theorem}

\subsection{Moser inequality}

\begin{theorem}
Let $\xi\in K,$ $\xg\geq-k,$ $x\in V(\xi,\frac{\xb_0}{16})$ and let
$\xb_1$ be the constant in Lemma \ref{doubling}. Then for any $\xn\geq N+\max\{2,2+\xg\},$ there exists $C=C(\xO,K,\xn,\xb_1)$ such that
\begin{align}\nonumber
\int_{\mathcal{B}(x,r)\cap\xO}|f(y)|^{2(1+\frac{2}{\xn})}d^2(y)d^\xg_K(y)dy&\leq Cr^2\overline{\mathcal{M}}(x,r)^{-\frac{2}{\xn}}
\int_{\mathcal{B}(x,r)\cap\xO}|\nabla f(y)|^2d^2(y)d^\xg_K(y)dy\\
&\times\Big(\int_{\mathcal{B}(x,r)\cap\xO}|f(y)|^{2}d^2(y)d^\xg_K(y)dy\Big)^{\frac{2}{\xn}},\label{mosersx}
\end{align}
for any $0<r<\xb_1$ and all $f\in C^\infty_c(\mathcal{B}(x,r)\cap\xO)$.
\end{theorem}
\begin{proof}
The cases $[d(x)>br]$ and $[d(x)\leq br \mbox{ and } d_K(x)>bC_{\xi}r]$
are proved as in \cite[Theorem 3.5]{FMoT} and
\cite[Theorem 2.6]{FMT2} respectively, using also the inequalities
already obtained in the proof of Lemma \ref{doubling}.

So, let us assume that  $d(x)\leq b r$ and $d_K(x)\leq b C_\xi r$.
We consider first the case where $-k\leq\xg<0.$
By H\"older inequality, we have
\begin{align}\nonumber
&\bigg(\int_{\mathcal{B}(x,r)\cap\xO}|f(y)|^2d^2(y)d^\xg_K(y)dy\bigg)^{\frac{2(\xn-N-2)}{\xn(N+2)}}\\
&\leq \overline{\mathcal{M}}(x,r)^{\frac{4(\xn-N-2)}{\xn(N+2)(N+4)}}
\bigg(\int_{\mathcal{B}(x,r)\cap\xO}|f(y)|^{2(1+\frac{2}{N+2})}d^2(y)d^\xg_K(y)dy\bigg)^{\frac{2(\xn-N-2)}{\xn(N+4)}}. \label{mozer2}
\end{align}
Moreover
\begin{align}\nonumber
\int_{\mathcal{B}(x,r)\cap\xO}&|f(y)|^{2(1+\frac{2}{\xn})}d^2(y)d^\xg_K(y)dy\\ \nonumber
&\leq \overline{\mathcal{M}}(x,r)^{1-\frac{(\xn+2) (N+2)}{\xn(N+4)}}\bigg(\int_{\mathcal{B}(x,r)\cap\xO}|f(y)|^{2(1+\frac{2}{N+2})}d^2(y)d^\xg_K(y)dy\bigg)^{\frac{(\xn+2) (N+2)}{\xn(N+4)}}\\ \nonumber
&=\overline{\mathcal{M}}(x,r)^{1-\frac{(\xn+2) (N+2)}{\xn(N+4)}}\bigg(\int_{\mathcal{B}(x,r)\cap\xO}|f(y)|^{2(1+\frac{2}{N+2})}d^2(y)d^\xg_K(y)dy\bigg)^{1-\frac{2(\xn-N-2)}{\xn(N+4)}}\\  \nonumber
&\leq \overline{\mathcal{M}}(x,r)^{\frac{2}{N+2}-\frac{2}{\xn}}
\int_{\mathcal{B}(x,r)\cap\xO}|f(y)|^{2(1+\frac{2}{N+2})}d^2(y)d^\xg_K(y)dy\\
&\qquad \bigg(\int_{\mathcal{B}(x,r)\cap\xO}|f(y)|^2d^2(y)d^\xg_K(y)dy\bigg)^{-\frac{2(\xn-N-2)}{\xn(N+2)}}, \nonumber \\
&\leq\overline{\mathcal{M}}(x,r)^{\frac{2}{N+2}-\frac{2}{\xn}}
\Big(\int_{\mathcal{B}(x,r)\cap\xO}|f(y)|^{\frac{2(N+2)}{N}}d^2(y)d^\xg_K(y)dy\Big)^{\frac{N}{N+2}} \nonumber \\
&\qquad\Big(\int_{\mathcal{B}(x,r)\cap\xO}|f(y)|^2d^2(y)d^\xg_K(y)dy\Big)^{\frac{2}{\xn}}, \label{moser3}
\end{align}
where in the second to last inequality we have used \eqref{mozer2}.
By Corollary \ref{hardysobolev} and Proposition \ref{propdK}, we have
\begin{align}
\nonumber
&\Big(\int_{\mathcal{B}(x,r)\cap\xO}|f(y)|^{\frac{2(N+2)}{N}}d^2(y)d^\xg_K(y)dy\Big)^{\frac{N}{N+2}}\leq C\int_{\mathcal{B}(x,r)\cap\xO}|\nabla f(y)|^2d^2(y)d^{\frac{\gamma N}{N+2}}_K(y)dy\\
&\leq C r^{-\frac{2\xg}{N+2}}\int_{\mathcal{B}(x,r)\cap\xO}|\nabla f(y)|^2d^2(y)
d^{\xg}_K(y)dy \label{mozer4}
\end{align}
Now, by Lemma \ref{doubling}
\be
 \overline{\mathcal{M}}(x,r)\asymp C(\xO,K,\xg,N,C_\xi,\xb_0) r^{N+\xg+2}. \label{mozer5}
\ee
The desired result follows by \eqref{moser3}, \eqref{mozer4} and \eqref{mozer5}.

If $\xg>0$, the proof of \eqref{mosersx} is similar, the only difference is that we use
Corollary \ref{hardysobolev2} instead of Corollary \ref{hardysobolev}.
\end{proof}
By \eqref{eigenest} and the above theorem, we have
\begin{corollary}
Let $\mu\leq k^2/4$  and let
$\xb_1$ be the constant in Lemma \ref{doubling}.
Then for any $\xn\geq N+\max\{2,2+\xg\},$ there exists
$C=C(\xO,K,\xn,\xb_1)$ such that for any $x\in\Omega$, any
$r\in (0,\beta_1)$ and any $f\in H^1_0(\mathcal{B}(x,r)\cap\xO ;
\phi_{\mu}^2)$ there holds
\begin{align*}
\int_{\mathcal{B}(x,r)\cap\xO}|f|^{2(1+\frac{2}{\xn})}\ei^2 dy&\leq Cr^2\mathcal{M}(x,r)^{-\frac{2}{\xn}}
\Big( \int_{\mathcal{B}(x,r)\cap\xO}|\nabla f|^2\ei^2 dy\Big)\\
&\times\Big(\int_{\mathcal{B}(x,r)\cap\xO}f ^{2}\ei^2 dy\Big)^{\frac{2}{\xn}}.
\end{align*}
\end{corollary}
\subsection{Harnack inequality}

We consider the problem
\be
(\partial_t +L_{\ei})u:= u_t- \ei^{-2}
\text{div}(\ei^2\nabla u)=0,\quad \text{in}\;(0,T)\times\mathcal{B}(x,r)\cap\xO,\label{prombl*}
\ee
for any $T>0$ and $r<\frac{\xb_1}{4}$ where  $\xb_1$ is the constant in Lemma \ref{doubling}.  Similarly with Definition \ref{def1intro}
we have
\begin{definition}
Let $D\subset\xO$ be an open set.
A function $v\in C^1((0,T):H^1(D  ; \ei^2  ))$ is a weak subsolution of $v_t+\CL_{\mu}v=0$ in $(0,T)\times D$ if for any non-negative $\xF\in C^1_c((0,T):C_c^\infty(D))$ we have
\bal
\int_{0}^{T}\int_{D}(v_t\xF+\nabla v\cdot
\nabla\xF ) \ei^2 \, dy \, dt\leq 0.
\eal
\end{definition}
\noindent
We now set
$$
Q=(s-r^2,s)\times\mathcal{B}(x,r)\cap\xO
$$
$$
Q_\xd=(s-\xd r^2,s)\times\mathcal{B}(x,\xd r)\cap\xO.
$$
Now we are ready to apply the Moser iteration argument in order to prove the
Harnack inequality for nonnegative weak solutions. The proof is based on the ideas in the proof of Harnack inequality in noncompact smooth
manifold (see \cite[Chapter 5]{SC1}). Let us note here that
Theorem \ref{density} allows to us to consider test functions in
$C_c^\infty({\mathcal{B}(x,r)}))$ instead of
$C_c^\infty({\mathcal{B}(x,r)\cap\xO}))$.
Thus we are able to prove boundary Harnack inequalities.

Let us first state the $L^p$ mean value inequality for nonnegative subsolutions of the operator $\partial_t +L_{\ei}.$
\begin{theorem}
Let $\mu\leq k^2/4$,
$ \xn\geq N+\max\{2,2+2\xg_+\}$ and $p>0$. There exists a
constant $C(\xn,\xl,\xb_1,p,\xO,K)$ such that for any $x\in\xO$  and for any positive subsolution $v$ of (\ref{prombl*}) in $Q$ we have the estimate
$$\sup_{Q_\xd}|v|^p\leq\frac{C}{(\xd'-\xd)^{\xn+2}r^{2}\overline{\mathcal{M}}(x,r)}\int_{Q_{\xd'}}|v|^p \ei^2 \, dy \,  dt,$$
for each $0<\xd<\xd'\leq1.$\label{th1*}
\end{theorem}
The proof of the above theorem is similar to the proof of \cite[Theorem 5.2.9]{SC1} and we omit it (see also \cite[Theorem 2.12]{FMT2}).
Similarly one can establish the proof of the parabolic Harnack
inequality up to the boundary of Theorem \ref{Harnack}.

\medsk

Let  $k(t,x,y)$ be the heat kernel of the problem
\[
\BAL
\tarr{v_t=-L_{\mu} v,}{\mathrm{ in }\;\;(0,T]\times\xO,}
{v=0,}{\mathrm{on}\;\;(0,T]\times\partial\xO,}
{v(0,x)=v_0(x),}{\mathrm{in}\;\;\xO.}
\EAL
\]
By the parabolic Harnack inequality \eqref{harnackpar},
and following the methods of Grigoryan and Saloff-Coste
(see for example \cite[Theorem 2.7]{GS} and \cite[Theorem 5.4.12]{SC1})
we obtain the following
sharp two-sided heat kernel estimate for small time:
\begin{theorem}\label{weismalltime1}
Let $\xb_1$ be the constant of Lemma \ref{doubling}. Then there exist positive constants $A_1,\;A_2,$ $C_1$ and $C_2,$ such that for all $x,\;y\in \xO$ and all $0<t<\frac{\xb_1^2}{4}$ the heat kernel $k(t,x,y)$ satisfies
\[
\BAL
\frac{C_1}{\mathcal{M}^\frac{1}{2}(x,\sqrt{t})\mathcal{M}^\frac{1}{2}(y,\sqrt{t})}\exp\Big(-A_1\frac{|x-y|^2}{t}\Big)&\leq k(t,x,y)\\
&\hspace{-2cm}\leq\frac{C_2}{\mathcal{M}^\frac{1}{2}(x,\sqrt{t})\mathcal{M}^\frac{1}{2}(y,\sqrt{t})}\exp\Big(-A_2\frac{|x-y|^2}{t}\Big).
\EAL
\]
\end{theorem}

\begin{theorem}
\label{weimaintheorem}
Let $\xm\leq \frac{k^2}{4}$. There exists positive constants
$T=T(\xO,K,\xm)>0$ and $C=C(\xO,K,\xm)>1$ such that the heat kernel
$k(t,x,y)$ satisfies
\begin{eqnarray*}
&\hspace{-.4cm} \ia &
\hspace{-.3cm}C^{-1}\Big(\frac{1}{(d(x)+\sqrt{t})(d(y)+\sqrt{t})}\Big)
\Big(\frac{1}{(d_K(x)+\sqrt{t})(d_K(y)+\sqrt{t})}\Big)^{\xg_+}
t^{-\frac{N}{2}}\exp\Big(-C\frac{|x-y|^2}{t}\Big)\\[0.2cm]
&& \hspace{3cm} \leq k(t,x,y)\leq\\
&&  C\Big(\frac{1}{(d(x)+\sqrt{t})(d(y)+\sqrt{t})}\Big)
\Big(\frac{1}{(d_K(x)+\sqrt{t})(d_K(y)+\sqrt{t})}\Big)^{\xg_+}
t^{-\frac{N}{2}}\exp\Big(\!\! -C^{-1}\frac{|x-y|^2}{t}\Big) \\
&& \mbox{for any $0<t\leq T$ and $x,y\in\xO$.} \\[0.2cm]
&\hspace{-.4cm} \ib &  C^{-1}\leq k(t,x,y)\leq C \, \;\;
 \mbox{for any $ t>T$ and $x,y\in\xO$.}
\end{eqnarray*}
\end{theorem}
\noindent
\emph{Proof of Theorem \ref{weimaintheorem} (i).} This follows
easily from Theorem \ref{weismalltime1}
and Corollary \ref{doublingcoro}. \hfill$\Box$

\section{Heat kernel estimates for large time}

\subsection{Weighted logarithmic Sobolev inequality}

\begin{theorem}
\label{logth}
Let $\mu\leq k^2/4$. There exists a positive constant
$C=C(\xO,K,\mu)$ such that for any $\epsilon >0$ there holds
\be
\int_\xO u^{2}\ln\frac{|u|}{\norm{u}_{L^2(\xO ; \ei^2)}}
\ei^2dx\leq \xe\int_\xO  |\nabla u|^2\ei^2dx+ b(\xe)\int_\xO  u^2\ei^2dx,\label{log}
\ee
for all $u\in H^1(\xO ; \ei^2 )$;
here $b(\xe)=C-\frac{N+2+\max(\xg_+,0)}{4}\min(\ln\xe,0)$.
\end{theorem}
\begin{proof}
We may assume that $\norm{u}_{L^2(\xO ; \ei^2)}=1$.
Assume first that $-\frac{k}{2}\leq\xg_+<0$. Then
\begin{align*}
\int_\xO |u|^{2}\ln |u| \ei^2dx&=\frac{N}{4} \int_\xO |u|^{2}\ln|u|^{\frac{4}{N}}\ei^2dx\\
&\leq \frac{N}{4}\ln\bigg( \int_\xO |u|^{\frac{2(N+2)}{N}}\ei^2dx \bigg)\\
&=\frac{N+2}{4}\ln\bigg( \Big(\int_\xO |u|^{\frac{2(N+2)}{N}}\ei^2dx \Big)^{\frac{N}{N+2}}\bigg)\\
&\leq \frac{N+2}{4}\ln \left(C_0\left(\int_\xO  |\nabla u|^2\ei^2dx+\int_\xO  |u|^2\ei^2dx\right)\right),
\end{align*}
where in the last inequality, we used Corollary \ref{hardysobolev4} and \eqref{eigenest}. Using the fact that $\frac{N+2}{4}\log\theta= \frac{N+2}{4}\ln\frac{4\xe\theta}{C_0(N+2)} +\frac{N+2}{4}\ln\frac{C_0(N+2)}{4\xe}, \;\forall\xe,\theta>0,$ we obtain the desired result with $b(\xe)=1+\frac{N+2}{4}(\ln C_0 +\ln\frac{N+2}{4}-\ln\xe),$ if $0<\xe\leq 1$

Similarly, if $\xe\geq1$ and $-\frac{k}{2}\leq\xg_+<0,$ we obtain the desired result with $b(\xe)=1+\frac{N+2}{4}(\ln C_0 +\ln\frac{N+2}{4}).$

If $\xg_+>0$ we proceed as above and we use Corollary \ref{hardysobolev3} instead of Corollary \ref{hardysobolev4}, in order to obtain \eqref{log} with $b(\xe)=1+\frac{N+2+2\xg_+}{4}(\ln C_1 +\ln\frac{N+2+2\xg_+}{4}-\ln\xe),$ where $C_1$ is the constant in Corollary \ref{hardysobolev3}.
\end{proof}

\begin{theorem}
Let $\mu\leq k^2/4$ and
let  $u\in H^1(\xO ; \ei^2 )$ be such that $\int_\xO u \, \ei^2 dx=0.$ There exists a positive constant $C=C(\xO,K,\mu)$ such that for
any $\epsilon >0$ there holds
\[
\int_\xO u^{2}
\ln\frac{|u|}{\norm{u}_{L^2(\xO ; \ei^2)}}\ei^2 dx\leq \xe\int_\xO |\nabla u|^2 \ei^2
dx+ b(\xe)\int_\xO u^2\ei^2 dx,
\]
where $b(\xe)=C-\frac{N+2+\max(2\xg_+,0)}{4} \ln \xe $.
\end{theorem}
\begin{proof}
By \eqref{poincaresxstoomega} and in view of the proof of \eqref{log} we obtain the desired result.
\end{proof}

\noindent {\bf\em{Proof of Theorem \ref{weimaintheorem} (ii).}}
We normalize $\ei$ so that $\int _\xO\ei^2 dx=1$.
We define the bilinear form
$Q: H_0^1(\xO ; \ei^2)\times H_0^1(\xO ;\ei^2)\to \R$  by
$$Q(u,v)=\int_\xO\nabla u \cdot \nabla v \, \ei^2 dx.$$
We recall here that $H^1(\xO ; \ei^2)=H^1_0(\xO ; \ei^2)$ by \eqref{eigenest} and Theorem \ref{density}.

Let $\CL_{\mu}$ denote the self-adjoint operator on
$L^2(\xO ; \ei^2)$ associated to the form $Q$, so that formally we may write
\[
\CL_{\mu}u=- \ei^{-2} \, \text{div}\left(\ei^2\nabla  u\right).
\]
The operator $\CL_{\mu}$ generates a contraction semigroup
$T(t): L^2(\xO ; \ei^2)\to L^2(\xO ; \ei^2)$, $t\geq 0$, denoted also by  $e^{-\CL_{\mu}t}$. This semigroup is positivity preserving and by \cite[Lemma 1.3.4]{D2} we can easily show that satisfies the conditions of \cite[Theorems 1.3.2 and 1.3.3]{D2}. Thus, by \eqref{log}, we can apply \cite[Corollary 2.2.8]{D2} to deduce that
\be
\| e^{-\CL_{\mu} t} u\|_{L^\infty(\xO) }\leq C_t \|u\|_{L^2(\xO ; \ei^2)},
\quad \quad  t>0, \;\; u\in L^2(\Omega ; \phi_{\mu}^2),
\label{6}
\ee
where
$$
C_t=e^{\frac{1}{t}\int_0^t b(\xe)d\xe}.
$$
Hence, by \cite[Lemma 2.1.2]{D2}, $e^{-\CL_{\mu}t }$ is ultracontractive and has a kernel $k(t,x,y)$ such that
$$
0\leq k(t,x,y)\leq C_t.
$$
By the last inequality, the upper estimate in Theorem
\ref{weimaintheorem} (ii) follows easily.

For the lower estimate in \ref{weimaintheorem} (ii) we will give two proofs. One using the boundary Harnack inequality \eqref{harnackpar} and the other one proceeding as the proof of \cite[Theorem 6]{D1}.

\vspace{1em}

\noindent \textbf{First proof} (as in the proof of
\cite[Theorem 6]{D1}). First we note that since $H^1(\xO ;\ei^2)$ is
compactly embedded in  $L^2(\xO ;\ei^2)$, the operator $\CL_{\mu}$
has compact resolvent. In addition, we have that $\CL_{\mu}1=0$ and
hence, by \eqref{poincaresxstoomega},
$$
{\rm sp}(\CL_{\mu})\subset\{0\}\cup[\xl,\infty),
$$
for some $\xl>0.$ Thus, using the spectral theorem, we can easily show that for any $f\in L^2(\xO ;\ei^2)$ such that $\int_\xO f\ei^2dx=0$
we have
\be
\|e^{-\CL_{\mu}t}f\|_{L^2(\xO ; \ei^2)}\leq e^{-\xl t}
\norm{f}_{L^2(\xO ; \ei^2)},\qquad\forall t\geq 0.
\label{7}
\ee
Now, let  $f\in L^1(\xO ;\ei^2)$ and $\int_\xO f\ei^2dx=0.$ By \eqref{6} and \eqref{7}, we have
\bal
\|e^{-\CL_{\mu}t}f\|_{L^\infty(\xO)}= \|e^{-\CL_{\mu}\frac{t}{3}}
\big(e^{-\CL_{\mu}\frac{2t}{3}}f\big)\|_{L^\infty(\xO)}\leq C_{\frac{t}{3}}
\|e^{-\CL_{\mu}\frac{2t}{3}}f\|_{L^2(\xO ; \ei^2)}\leq e^{-\frac{\xl t}{3}}C_{\frac{t}{3}}
\|e^{-\CL_{\mu}\frac{t}{3}}f\|_{L^2(\xO ; \ei^2)}.
\eal
Taking adjoints we have
$$
\|e^{-\CL_{\mu}\frac{t}{3}}f\|_{L^2(\xO ; \ei^2)}
\leq C_{\frac{t}{3}}\norm{f}_{L^1(\xO ; \ei^2)},
$$
hence
\[
\|e^{-\CL_{\mu}t}f\|_{L^\infty(\xO)}
\leq e^{-\frac{\xl t}{3}}C_{\frac{t}{3}}^2\norm{f}_{L^1(\xO ; \ei^2)}.
\]
Let now $f\in L^1(\xO ;\ei^2)$. The function
$g:=f-\int_\xO f\ei^2 dx$ satisfies
$\int_\xO g\ei^2 dx=0$, thus
\[
e^{-\CL_{\mu}t}g=e^{-\CL_{\mu}t}f-\inprod{f}{1}_{L^2(\xO ; \ei^2)}.
\]
Hence the operator
\[
\tilde T(t)f=e^{-\CL_{\mu}t}f-\inprod{f}{1}_{L^2(\xO ;\ei^2)}
\]
satisfies
$$
\|\tilde T(t)f\|_{L^\infty(\xO)}=\|e^{-\CL_{\mu}t}g\|_{L^\infty(\xO)}\leq e^{-\frac{\xl t}{3}}C_{\frac{t}{3}}^2
\norm{g}_{L^1(\xO;\ei^2)}
\leq 2e^{-\frac{\xl t}{3}}C_{\frac{t}{3}}^2\norm{f}_{L^1(\xO;\ei^2)}.
$$
Therefore the integral kernel $\tilde k(t,x,y)$ of  $\tilde T(t)$
satisfies $\tilde k(t,x,y)=k(t,x,y)-1$ and
$$|\tilde k(t,x,y)|\leq 2e^{-\frac{\xl t}{3}}C_{\frac{t}{3}}^2.$$
The desired result follows if we choose $t$ large enough.

\vspace{1em}

\noindent\textbf{Second proof }(using the boundary Harnack inequality \eqref{harnackpar}).
Let $x_0 \in \xO.$ Then by \eqref{harnackpar} we can show that
$$
k(t-1,x,y)\leq C(\xO,K) k(t,x,x_0),
$$
for all $t\geq 2$ and $x,y\in \xO.$ Thus,
\begin{align*}
1=\int_\xO k(t-1,x,y)\ei^2(y)dy&\leq C(\xO,K)\int_\xO k(t,x,x_0)\ei^2(y)dy\\
&=C(\xO,K)k(t,x,x_0),\quad\forall t\geq2.
\end{align*}
The desired result follows.
\end{proof}

\subsection{Green function estimates}
\label{sub:greek}

In this subsection we prove the existence of the Green kernel of $L_\xm$
along with sharp two-sided estimates.

\begin{proposition}
\label{green}
Let $\mu\leq k^2/4$ and assume that $\lambda_{\mu}>0$.
For any $y \in \Omega $ there exists a minimal Green function  $G_{\mu}(\cdot,y)$ of the equation
\[
L_\mu u =\delta_y \quad \text{in } \Omega,
\]
where $\delta_y$ denotes the Dirac measure at $y$.
Furthermore, the following estimates hold
\ba \label{Greenest}
	G_{\xm}(x,y)&\asymp\left\{
	\BAL
	&|x-y|^{2-N}\min\Big\{1 ,\frac{d(x)d(y)}{|x-y|^2}\Big\} \left(\frac{d_K(x)d_K(y)}{\left(d_K(x)+|x-y|\right)\left(d_K(y)+|x-y|\right)}\right)^{\xg_+},\\
	& \hspace{8cm} \text{if } \xgp >-\frac{N}{2} , \\
	&|x-y|^{2-N} \min\Big\{1,\frac{d(x)d(y)}{|x-y|^2}
	\Big\}
	\Big(
	\frac{|x| \, |y|}{\left(|x|+|x-y|\right)\left(|y|+|x-y|\right)}
\Big)^{-\frac{N}{2}}\\
	&\quad \quad +\frac{d(x)d(y)}{(|x||y|)^{\frac{N}{2}}}\left|\ln\Big(\min\Big\{ \frac{1}{|x-y|^2}, \frac{1}{d(x)d(y)} \Big\}\Big)\right|,\quad\text{if }\; \xgp=-\frac{N}{2} .
	\EAL\right.
\ea
\end{proposition}
\begin{proof}
First, let $C_1>0$ and $T$ be as in Theorem \ref{maintheorem1}. We note that
\be\label{estside1}
\left(\Big(\frac{\sqrt{t}}{d(x)}+1\Big)\Big(\frac{\sqrt{t}}{d(y)}+1\Big)\right)^{-1}=\frac{d(x)d(y)}{(\sqrt{t}+d(x))(\sqrt{t}+d(y))}
\leq\min\{1,\frac{d(x)d(y)}{t}\}
\ee
and
\be\label{estside2} \BAL
\left(\Big(\frac{\sqrt{t}}{d(x)}+1\Big)\right.&\left.\Big(\frac{\sqrt{t}}{d(y)}+1\Big)\right)^{-1}e^{-\frac{C_1|x-y|^2}{t}}
=\frac{d(x)d(y)}{(\sqrt{t}+d(x))(\sqrt{t}+d(y))}e^{-\frac{C_1|x-y|^2}{t}} \\
&\geq C
\min\{1,\frac{d(x)d(y)}{t}\}
e^{-\frac{(1+C_1)|x-y|^2}{t}}
\EAL \ee
for all $x,y\in\xO$ and $0<t<T$, where $C=C(C_1,T)>0$.

By Theorem \ref{maintheorem1}, \eqref{eigenest} and estimates \eqref{estside1}--\eqref{estside2},  there exist $C_i=C_i(\xO,K,\mu)>0$, $i=1,2$ and $T=T(\xO,K,\mu)>0$ such that
for $t \in (0,T)$ and $x,y\in \xO$,
\begin{align}
 \BAL C_1\min\Big\{1,\frac{d(x)d(y)}{t}\Big\}
\Big(\frac{d_K(x)}{d_K(x)+\sqrt{t}}\Big)^{\xg_+}
\Big(\frac{d_K(y)}{d_K(y)+\sqrt{t}}\Big)^{\xg_+}t^{-\frac{N}{2}}&e^{-\frac{C_2|x-y|^2}{t}}\leq h(t,x,y)\\
\leq C_2\min\Big\{1,\frac{d(x)d(y)}{t}\Big\}
\Big(\frac{d_K(x)}{d_K(x)+\sqrt{t}}\Big)^{\xg_+}
\Big(\frac{d_K(y)}{d_K(y)+\sqrt{t}}\Big)^{\xg_+}&t^{-\frac{N}{2}}e^{-\frac{C_1|x-y|^2}{t}},
\EAL\label{heatest1}
\end{align}
while
\be
 C_1\leq
\frac{ h(t,x,y)}{d(x)d(y)d_K^{\xgp}(x)d_K^{\xgp}(y)e^{-\xl_\xm t}} \leq  C_2 ,\qquad\forall t\geq T, \; x,y\in \xO.
\label{heatest2}
\ee
By \eqref{heatest1} and \eqref{heatest2}, we deduce the existence of
the minimal Green kernel $G_{\xm}$ of $L_{\mu}$, given by
\be\label{Gh}
G_{\xm}(x,y)=\int_0^\infty h(t,x,y)dt= \int_0^Th(t,x,y)dt+\int_T^\infty h(t,x,y)dt.
\ee
Using \eqref{heatest2} we easily see that
the second integral in \eqref{Gh} satisfies the required upper estimate
in both cases considered (i.e. $\gamma_+ > -\frac{N}{2}$ or
$\gamma_+ = -\frac{N}{2}$).
We now concentrate on the first integral in \eqref{Gh}.

By the change of variable $s=\frac{|x-y|^2}{t}$, we obtain for $i=1,2$,
\begin{align*}
&\int_0^T\min\Big\{1,\frac{d(x)d(y)}{t}\Big\}
\Big(\frac{d_K(x)}{d_K(x)+\sqrt{t}}\Big)^{\xg_+}
\Big(\frac{d_K(y)}{d_K(y)+\sqrt{t}}\Big)^{\xg_+}
t^{-\frac{N}{2}}e^{-\frac{C_i|x-y|^2}{t}}dt=|x-y|^{2-N} \\
&\int_{\frac{|x-y|^2}{T}}^\infty\min
\Big\{1,s\frac{d(x)d(y)}{|x-y|^2}\Big\}
\left(\Big(\frac{|x-y|}{\sqrt{s}d_K(x)}+1\Big)\Big(\frac{|x-y|}{\sqrt{s}d_K(y)}+1\Big)\right)^{-\xgp}
s^{\frac{N}{2}-2}e^{-C_i s}ds \\
& =: |x-y|^{2-N}S_i(x,y) \, .
\end{align*}
By \eqref{heatest1} we therefore have for some $c_1,c_2>0$ that
\begin{equation}
c_1 |x-y|^{2-N} S_2(x,y)  \leq  \int_0^T h(t,x,y)dt    \leq c_2
|x-y|^{2-N} S_1(x,y) \; , \qquad x,y\in\Omega.
\label{1234}
\end{equation}
In the sequel, we assume that $\frac{|x-y|^2}{T}<\frac{1}{2}.$  The proof in the case
$\frac{|x-y|^2}{T} > \frac12$ is similar, indeed simpler.
We write
\ba \label{3green}
\BAL
&S_1=\int_{\frac{|x-y|^2}{T}}^{1}
\min\Big\{1,s\frac{d(x)d(y)}{|x-y|^2}\Big\}
\left(\Big(\frac{|x-y|}{\sqrt{s}d_K(x)}+1\Big)\Big(\frac{|x-y|}{\sqrt{s}d_K(y)}+1\Big)\right)^{-\xgp}
s^{\frac{N}{2}-2}e^{-C_1 s}ds\\
&\quad\quad +\int_{1}^\infty\min
\Big\{1,s\frac{d(x)d(y)}{|x-y|^2}\Big\}
\left(\Big(\frac{|x-y|}{\sqrt{s}d_K(x)}+1\Big)\Big(\frac{|x-y|}{\sqrt{s}d_K(y)}+1\Big)\right)^{-\xgp}
s^{\frac{N}{2}-2}e^{-C_1 s}ds.
\EAL
\ea
Concerning the second term in the RHS of \eqref{3green}
we have
\begin{eqnarray*}
&& \int_{1}^\infty \min\Big\{1,s\frac{d(x)d(y)}{|x-y|^2}\Big\}
\bigg(\Big(\frac{|x-y|}{\sqrt{s}d_K(x)}+1\Big)\Big(\frac{|x-y|}{\sqrt{s}d_K(y)}+1\Big)\bigg)^{-\xgp}
s^{\frac{N}{2}-2}e^{-C_1 s}ds\\
&& \qquad  \leq C \min\Big\{1,\frac{d(x)d(y)}{|x-y|^2}\Big\}
\left(\Big(\frac{|x-y|}{d_K(x)}+1\Big)\Big(\frac{|x-y|}{d_K(y)}+1\Big)\right)^{-\xgp},
\end{eqnarray*}
and therefore the required estimate is satisfied.

Let $\xg_+\leq0$. For the first term in the RHS of \eqref{3green} we have
\begin{align}
&\int_{\frac{|x-y|^2}{T}}^{1}\min\Big\{1,s\frac{d(x)d(y)}{|x-y|^2}
\Big\}
\left(\Big(\frac{|x-y|}{\sqrt{s}d_K(x)}+1\Big)\Big(\frac{|x-y|}{\sqrt{s}d_K(y)}+1\Big)\right)^{-\xgp}
s^{\frac{N}{2}-2}e^{-C_1 s}ds\nonumber \\
&=\int_{\frac{|x-y|^2}{T}}^{1}\min\Big\{1,s\frac{d(x)d(y)}{|x-y|^2}
\Big\}
\left(\Big(\frac{|x-y|}{d_K(x)}+ \sqrt{s}\Big)\Big(\frac{|x-y|}{d_K(y)}
+\sqrt{s} \Big)\right)^{-\xgp}
s^{\frac{N}{2}+\xgp-2}e^{-C_1 s}ds\nonumber \\
&\leq C\left(|x-y|^{-2\xgp}\big(d_K(x)d_K(y)\big)^{\xgp}
\int_{\frac{|x-y|^2}{T}}^1\min\Big\{1,s\frac{d(x)d(y)}{|x-y|^2}\Big\}
s^{\frac{N}{2}+\xgp-2}e^{-C_1 s}ds  \right.\nonumber \\
&\;\; \left.  +|x-y|^{-\xgp} \big(d_K(x)d_K(y)\big)^{\!\xgp}
\!\!\!\!\!\int_{\frac{|x-y|^2}{T}}^{1}\!\!\!
\min\Big\{ \! 1,s\frac{d(x)d(y)}{|x-y|^2}\Big\}(d_K(x)+d_K(y))^{-\xgp}
s^{\frac{N}{2}+\frac{\xgp}{2}-2}e^{-C_1 s}ds\right.\nonumber \\
&\quad \left.+\int_{\frac{|x-y|^2}{T}}^{1}
\min\Big\{1,s\frac{d(x)d(y)}{|x-y|^2}\Big\}
s^{\frac{N}{2}-2}e^{-C_1 s}ds\right)\nonumber \\
&=: C(J_1+J_2+J_3)  \label{5green}
\end{align}
It is easily seen that
\[
J_3 \leq C \min\Big\{1,\frac{d(x)d(y)}{|x-y|^2}\Big\} \,  .
\]
Concerning $J_1$ and $J_2$ we consider two cases.

\minsk
\noindent
{\bf Case I.} $-\frac{N}{2}<\gamma_+ \leq 0$.
In view of \eqref{1234} and \eqref{5green}, it is enough to establish
that for $i=1,2$ we have
\be
J_i \leq
\min\Big\{1 ,\frac{d(x)d(y)}{|x-y|^2}\Big\} \left(\frac{d_K(x)d_K(y)}{\left(d_K(x)+|x-y|\right)\left(d_K(y)+|x-y|\right)}\right)^{\xg_+}
\!\! , \quad i=1,2.
\label{1234a}
\ee
In order to prove \eqref{1234a} we shall need to consider
additional cases.

\minsk
\noindent{\bf\em Case Ia.} $\frac{d(x)d(y)}{|x-y|^2} \leq 1$. In this case
it is immediate that
\[
J_1 =C |x-y|^{-2-2\xgp}\big(d_K(x)d_K(y)\big)^{\xgp} d(x)d(y).
\]
and
\[
J_2 = C|x-y|^{-2-\xgp} (d_K(x)d_K(y))^{\xgp} \big(d_K(x)+d_K(y)
\big)^{-\xgp}
d(x)d(y).
\]
Hence inequality \eqref{1234a} is satisfied.

\minsk

\noindent
{\bf\em Case Ib.} $\frac{d(x)d(y)}{|x-y|^2} > 1$.
In this case we have
$\frac{1}{4}d_K(y)\leq d_K(x)\leq 4d_K(y)$. Indeed, suppose that
$d_K(x) > 4d_K(y)$. Then, since $d_K(x)\leq |x-y|+d_K(y)$, we easily obtain that $d_K(y)\leq \frac13  |x-y|$ and
$d_K(x)\leq \frac43 |x-y|$; hence $d(x)d(y)\leq \frac49 |x-y|^2$, a contradiction.

To proceed we first note that
\begin{eqnarray}
J_1 &\leq & |x-y|^{-2\xgp}\big(d_K(x)d_K(y)\big)^{\xgp}\left(
\frac{d(x)d(y)}{|x-y|^2}\int_0^{\frac{|x-y|^2}{d(x)d(y)}}
s^{\frac{N}{2}+\xgp-1}e^{-C_1 s}ds\right.  \nonumber \\
&&\hspace{3.5cm} \left.+\int_{\frac{|x-y|^2}{d(x)d(y)}}^1
s^{\frac{N}{2}+\xgp-2}e^{-C_1 s}ds\right)
\label{j1}
\end{eqnarray}
and similarly
\begin{eqnarray}
J_2 &\leq  & |x-y|^{-\xgp} \Big(\frac{  d_K(x)d_K(y)}{ d_K(x)+d_K(y)}
\Big)^{\xgp}\left(
\frac{d(x)d(y)}{|x-y|^2}\int_0^{\frac{|x-y|^2}{d(x)d(y)}}
s^{\frac{N}{2}+ \frac{\xgp}{2}-1}e^{-C_1 s}ds\right.  \nonumber \\
&&\hspace{4cm} \left.+\int_{\frac{|x-y|^2}{d(x)d(y)}}^1
s^{\frac{N}{2}+\frac{\xgp}{2}-2}e^{-C_1 s}ds\right)
\label{j2}
\end{eqnarray}
We  now consider different subcases.

\minsk
\noindent {\em Case 1.} $-\frac{N}{2}<\xgp< -N+2$.
From \eqref{j1} and \eqref{j2} we obtain
\[
J_1 \leq c  , \qquad J_2  \leq c.
\]
It  follows that \eqref{1234a} is satisfied.

\minsk
\noindent {\em Case 2.} $\xgp = -N+2>-\frac{N}{2}$.
In this case \eqref{j1} and \eqref{j2} give
\[
J_1 \leq c
\]
and
\[
J_2 \leq c |x-y|^{-\xgp} \Big(\frac{  d_K(x)d_K(y)}{ d_K(x)+d_K(y)}
\Big)^{\xgp}\bigg(1+ \ln\Big( \frac{d(x)d(y)}{ |x-y|^2} \Big)\bigg)\leq c
\]
Again it is easily seen that \eqref{1234a} is satisfied.

\minsk
\noindent {\em Case 3.} $ \max\{-\frac{N}{2},-N+2 \}< \xgp < -\frac{N-2}{2}$. In this case we obtain
\[
J_1 \leq c ,
\qquad
J_2 \leq c  |x-y|^{-\xgp} \Big(\frac{  d_K(x)d_K(y)}{ d_K(x)+d_K(y)}
\Big)^{\xgp}\leq c
\]
and  \eqref{1234a}  once again follows.

\minsk
\noindent {\em Case 4.} $\xgp = -\frac{N-2}{2}  <0$. In this case we
obtain
\bal
J_1 &\leq c|x-y|^{-2\xgp}\big(d_K(x)d_K(y)\big)^{\xgp} \left(1+ \ln\Big( \frac{d(x)d(y)}{ |x-y|^2} \Big)\right)\leq c,
\\
J_2 &\leq c  |x-y|^{-\xgp} \Big(\frac{  d_K(x)d_K(y)}{ d_K(x)+d_K(y)}
\Big)^{\xgp}\leq c
\eal
and  \eqref{1234a}  once again follows.
\minsk

\noindent {\em Case 5.} $-\frac{N-2}{2}<\xgp\leq 0$. In this case we
obtain
\[
J_1 \leq c|x-y|^{-2\xgp}\big(d_K(x)d_K(y)\big)^{\xgp} \leq c,
\qquad J_2 \leq c  |x-y|^{-\xgp}
\Big(\frac{  d_K(x)d_K(y)}{ d_K(x)+d_K(y)}\Big)^{\xgp}\leq c
\]
and  \eqref{1234a}  once again follows.

\minsk
\noindent{\bf Case II.} $\gamma_+ = -\frac{N}{2}$.
The proof is very similar to the previous case and for the sake
of brevity we shall only
consider $J_1$, where the main difference appears. We note that
in this case we have $d_K(x)=|x|$.

We  assume that
$\frac{|x-y|^2}{T} \leq \frac12$. The proof in the case
$\frac{|x-y|^2}{T} > \frac12$ is similar, indeed simpler.

\minsk
\noindent
{\bf\em Case IIa.} $\frac{d(x)d(y)}{|x-y|^2} \leq 1$.
In this case we easily obtain
\[
J_1 \leq c \, |x-y|^{N-2} d(x)d(y)
( |x| \, |y| )^{-\frac{N}{2}}\log\Big(\frac{T}{|x-y|^2}\Big),
\]
and this is estimated using the second term in the RHS of \eqref{Greenest}.

\minsk
\noindent{\bf\em Case IIb.} $\frac{d(x)d(y)}{|x-y|^2} \geq 1$.
We may assume that $\frac{|x-y|^2}{d(x)d(y)} > \frac{|x-y|^2}{T} $,
otherwise we need only consider the second of the two integrals
below.

We have
\begin{eqnarray*}
J_1 &=& |x-y|^{N} ( |x| \, | y|)^{-\frac{N}{2}}\bigg(
\frac{d(x)d(y)}{|x-y|^2}\int_{\frac{|x-y|^2}{T}}^{\frac{|x-y|^2}{d(x)d(y)}}
s^{-1}e^{-C_1 s}ds +\int_{\frac{|x-y|^2}{d(x)d(y)}}^1
s^{-2}e^{-C_1 s}ds\bigg)  \\
&\leq &  c |x-y|^{N-2}d(x)d(y)
\big(|x| \, | y|\big)^{-\frac{N}{2}}
\log\Big(\frac{T}{d(x)d(y)}\Big) ,
\end{eqnarray*}
which satisfies the upper bound in \eqref{Greenest}.
Hence the upper bound has been established in all cases.

This concludes the proof of the upper estimate when $\xgp\leq 0$.
If $\xgp>0$ then the proof is essentially similar, indeed simpler, and
is omitted.

The proof of the lower bound is much simpler. For example,
in case $\gamma_+ \leq 0$ we have from \eqref{1234}
\[
G_{\mu}(x,y) \geq c_1 |x-y|^{2-N} S_2(x,y)
\geq c  |x-y|^{2-N} J_1(x,y) ,
\]
where $J_1$ is as above, the only difference being that the exponential
factor in the integrand is  $e^{-C_2s}$ instead of $e^{-C_1s}$. The result
then follows easily.
\end{proof}

\section{The linear elliptic problem}
\label{section5}

\subsection{Subsolutions and Supersolutions}

We recall the definition of the function $\tilde{d}_K$ from (\ref{dtilda}).
Given parameters $\epsilon>0$ and $M\in\R$ we define the functions
\[
\begin{array}{ll}
\eta_{\xg_+,\varepsilon}= e^{-M d}
(d+\tilde d^2_K)\tilde d_{K}^{\xg_+}-d\tilde d_{K}^{\xg_++\xe}
& \!\!	  \zeta_{\xgp,\varepsilon}=e^{M d} (d+\tilde d^2_K)
\tilde d_{K}^{\xg_+}+d\tilde d_{K}^{\xg_++\xe} \\
\eta_{\xg_-,\varepsilon}=e^{-M d}(d+\tilde d^2_K)
\tilde d_{K}^{\xg_-}+d\tilde d_{K}^{\xg_-+\xe}
& \!\!\zeta_{\xg_-,\varepsilon}=e^{M d}(d+\tilde d^2_K)
\tilde d_{K}^{\xg_-}-d\tilde d_{K}^{\xg_-+\xe} \\
\xz_{\scaleto{+}{3pt},\varepsilon}=\! e^{-M d}
(-\ln \tilde d_{K})(d \! +\tilde d^2_K) \tilde d_{K}^{-\frac{k}{2}}
 \! - \! d\tilde d_{K}^{-\frac{k}{2}+\xe}
&\!\! {\xz_{\scaleto{-}{3pt},\varepsilon}}=\! e^{Md}
(-\ln \tilde d_{K})
(d+\tilde d^2_K)\tilde d_{K}^{-\frac{k}{2}}\! + \!
d\tilde d_{K}^{-\frac{k}{2}+\xe}
\end{array}
\]

\begin{lemma}
Let $\mu\leq k^2/4$ and $0<\varepsilon<1$.
There exist positive constants $\xb_0=\xb_0(\xO,K,\xm,\xe)$
and $ M=M(\xO,K,\mu,\xe)$ such that the following hold in $K_{\xb_0}\cap \xO$: \newline
$\ia$ The functions $\eta_{\xg_+,\varepsilon}$ and
$ \zeta_{\xgp,\varepsilon}$ are non-negative
in $K_{\xb_0}\cap \xO$ and satisfy
\[
	L_\xm  \eta_{\xg_+,\varepsilon}\geq 0 ,
\qquad\quad  L_\xm  \zeta_{\xgp,\varepsilon}\leq 0 , \quad
\mbox{ in }K_{\xb_0}\cap \xO.
\]
$\ib$ If  $\mu <k^2/4$ and $\xe <\min\{1,\sqrt{k^2-4\mu}\})$ then
$	\eta_{\xg_-,\varepsilon}$ and $\zeta_{\xg_-,\varepsilon}$
are non-negative in $K_{\xb_0}\cap \xO$ and satisfy
\be
L_\xm  \eta_{\xg_-,\varepsilon}\geq 0 , \qquad\quad
 L_\xm  \zeta_{\xg_-,\varepsilon}\leq 0 , \quad \mbox{ in }
 K_{\xb_0}\cap \xO .
 \label{supsol2}
\ee	
$\ic$ The functions ${\xz_{\scaleto{+}{3pt},\varepsilon}}$
and	${\xz_{\scaleto{-}{3pt},\varepsilon}}$ are non-negative
in $K_{\xb_0}\cap \xO$ and satisfy
\[
L_{ \frac{k^2}{4}} {\xz_{\scaleto{+}{3pt},\varepsilon}} \geq 0 ,
\qquad\quad
L_{\frac{k^2}{4}}{\xz_{\scaleto{-}{3pt},\varepsilon}}\leq 0 ,
\quad \mbox{ in }K_{\xb_0}\cap \xO.
\]
\label{subsup}
\end{lemma}
\begin{proof} Let $M\in\mathbb{R}$.
By Proposition \ref{propdK} we have in $\xO\cap K_{\xb_0} $,
\begin{align*}\nonumber
\xD(d^a \tilde{d}_{K}^{b})
&= d^{a-2} \tilde d_K^{b}\Big(  a(a-1) +ad \xD   d \Big) \\
& \quad +
d^a \,\tilde d_K^{b-2}
\Big(2ab+b(k-1+f)+ b(b-1)(1+h)\Big) \\
\nabla e^{Md}\cdot \nabla(d^a \tilde{d}_{K}^{b})
&= M e^{Md}(ad^{a-1} \tilde d_K^b+bd^{a+1}\tilde d_K^{b-2})\\
\xD e^{Md}&=e^{Md}(M^2+M\xD d)
\end{align*}
Thus
\begin{align*}
&L_\xm( e^{Md}d^a \tilde{d}_{K}^{b})=-e^{Md}d^{a-1}\tilde d_K^b
\left(M^2 d+Md\xD d+2aM+a\xD d +a(a-1)d^{-1} \right)\\
&- e^{Md}d^a \tilde d_K^{b-1}\Big(\frac{2Mbd+b f+b(b-1)h+\xm g}{\tilde d_K}\Big) - \big(b(k-1)+b(b-1)+2ab+\xm\big)
e^{Md}d^a \tilde d_K^{b-2}
\end{align*}
Now let $M\in \mathbb{R}$ and $0<\xe<1$. Using the above formulas
we find
\bal
\hspace{-.8cm}L_\xm&(e^{Md} (d+\tilde d^2_K)\tilde{d}_{K}^{\xgp})
-L_\xm( d\tilde{d}_{K}^{\xgp+\xe})\\
&=-e^{Md}\tilde d_K^{\xgp}\Big((M^2 d+Md\xD d+2M+\xD d)
 +(M^2+M\xD d)\tilde d_K^2  \Big) \\
&\quad - e^{Md} d \tilde d_K^{\xgp -2}\Big(
 2M\xgp d +\xgp f + \xgp(\xgp-1)h +\mu g  \Big) \\
&\quad -e^{Md}  \tilde d_K^{\xgp} \Big( 2 (\xgp+k) +( \xgp +2)
\big( (\xgp+1)h  +f +2M  d\big) \Big)\\
&\quad + \epsilon(2\xgp +k+\epsilon)d \tilde{d}_K^{\xgp +\epsilon-2} \\
&\quad + d \tilde{d}_K^{\xgp +\epsilon-2}
\Big(  (\xgp+\epsilon)(\xgp+\epsilon -1)h +(\xgp+\epsilon)f
+\mu g  \Big) \\
&\quad +  (\Delta d) \tilde{d}_K^{\xgp +\epsilon}.
\eal
The RHS in the last equality consists of six terms.
We now choose $\xb_0$ small enough and $M<0$ so that
the sum of the first, third and sixth terms is non-negative in
$K_{\xb_0}\cap \xO$.
The fourth  term is clearly positive, and by taking $\beta_0$ smaller if
necessary it may also control the second and the fifth terms.
Hence $L_\xm  \eta_{\xg_+,\varepsilon}\geq 0$ in $K_{\xb_0}\cap \xO$.

The proofs of the other cases of the lemma are similar and are omitted. For (iii) we also use the relations
\begin{align*}\nonumber
\xD\ln \tilde d_K&=\frac{\xD \tilde d_K}{\tilde d_K}-\frac{|\nabla\tilde d_K|^2}{\tilde d_K^2}\\
\nabla \ln \tilde d_K \cdot  \nabla(e^{Md}d\tilde{d}_{K}^{b})&=\tilde d_K^{b-2}e^{Md}\Big(Md^2+d+b\, d|\nabla \tilde d_K|^2\Big)
\end{align*}
and
\begin{align*}
&-L_\xm \big( (-\ln\tilde d_K)e^{Md}d\tilde{d}_{K}^{b}\big)
=(-\ln\tilde d_K)e^{Md}\tilde d_K^b\left(M^2 d+Md\xD d+2M+\xD d\right)\\
&\qquad +(-\ln\tilde d_K) e^{Md}d\tilde d_K^{b-1}\Big(\frac{2Mbd+b f+b(b-1)h+\xm g}{\tilde d_K}\Big) \\
&\qquad +(-\ln\tilde d_K)\big( \, b(k+1)+b(b-1)+\xm \, \big)
e^{Md}d\tilde d_K^{b-2}\\
&\qquad +e^{Md}d\tilde{d}_{K}^{b-2}
\Big( -2Md -f+(1-2b)h -2b-k \Big).
\end{align*}
\end{proof}

\begin{lemma}
Let $\xb_0>0$ be the constant in Lemma \ref{subsup}, $\xi\in\partial\xO$ and $0<r<\frac{\xb_0}{16}.$ We assume that $u\in H^1_{loc}(B_{r}(\xi)\cap\xO)\cap C(B_r(\xi)\cap\xO)$ is $L_\xm$-harmonic in $B_{r}(\xi)\cap\xO$ and
\bel{bdrcond} \lim_{\dist(x,F)\to 0}
\frac{u(x)}{\tilde W(x)}=0,\quad\forall\; \text{compact} \; F\subset B_r(\xi)\cap\partial\xO.
\ee
Then there exists  $C=C(u,\xO,K,r)>0$ such that
\bel{regu2}
|u| \leq C\xf_\xm \, , \qquad x\in B_{\frac{r}{4}}(\xi) \cap \Omega \, .
\ee
Moreover, if
$0\leq\eta_r\leq1$ is a smooth function with compact support in
$B_{\frac{r}{2}}(\xi)$ with $\eta_r=1$ on $B_{\frac{r}{4}}(\xi)$, then
\bel{regu1}
\frac{\eta_ru}{\xf_\xm}\in H^1_0(\xO ;\xf_\xm^2).
\ee
Furthermore, if $u$ is nonnegative there exists $\conl{charnack}=\conr{charnack}(\xO,K)>0$ such that
\be
\frac{u(x)}{\xf_\xm(x)}\leq \conr{charnack}\frac{u(y)}{\xf_\xm(y)},\quad\forall x,y\in B_{\frac{r}{16}}(\xi)\cap\xO.
\label{harnack}
\ee
\label{lemharnack}
\end{lemma}
\begin{proof}
We will only consider the case  $\xm<k^2/4$ and $\xi\in K_\frac{\xb}{16}\cap\partial\xO$; the proof of the other cases is very similar and we omit it.

Since $u$ is $L_\xm$- harmonic in $B_r(\xi)\cap\xO$, by standard elliptic
estimates we have that $u\in C^2(B_r(\xi)\cap\xO)$.
Set $w_l= \max\{u-l\eta_{\xgn,\varepsilon},0\}$ where $l>0$ and
$\eta_{\xgn,\varepsilon}$ is the supersolution in \eqref{supsol2}.
Then by Kato's formula we have
$$
L_\mu w_l \leq 0  \, , \qquad  \text{in } B_r(\xi)\cap\xO.
$$
Setting $v_l=\frac{w_l}{\phi_\mu}$, by straightforward calculations we have
\bel{subsolution}
-\div(\phi_\mu^2\nabla v_l)+\lambda_\mu \phi_\mu^2v_l\leq0
\, ,  \qquad\text{in } B_r(\xi)\cap\xO.
\ee
We note here that $v_l=0$ if $u\leq l\eta_{\ap,\varepsilon}$, thus by the assumptions we can easily obtain that $v_l\in H^1(B_\frac{r}{2}(\xi) ; \phi_\mu^2 )$.

By Theorem \ref{th1*}, we can prove the existence of a constant $r_{\xb_0}$ and $C=C(K)>0$ such that for any $r'\leq\min\{\frac{r}{2},r_{\xb_0}\}$ and $p\geq1$ the following inequality holds
\bel{moser}
\sup_{x\in B_{\frac{r'}{2}}(\xi)\cap \xO} v_l\leq C
\bigg(\Big(\int_{B_{r'}(\xi) \cap \xO}\phi_\mu^2dx\Big)^{-1}
\int_{B_{r'}(\xi) \cap \xO} |v_l|^p\phi_\mu^2dx\bigg)^\frac{1}{p}.
\ee
From \eqref{bdrcond} and the definition of $w_l$, we have
\[
w_l \leq u_+ \leq C\tilde W =C (d+\tilde d^2_K)\tilde d_{K}^\xgn , \quad \text{in } B_{\frac{r}{2}}(\xi) \cap\xO.
\]
This and \eqref{eigenest} imply that
\bal
\int_{B_{r'}(\xi) \cap \xO} |v_l|\phi_\mu^2dx&= \int_{B_{\frac{r}{2}}(\xi) \cap\xO} |w_l|\phi_\mu dx\\
&\leq C \int_{B_{\frac{r}{2}}(\xi) \cap \xO} (d+\tilde d^2_K)d\tilde d_{K}^{-k} dx \leq C  \int_{B_{\frac{r}{2}}(\xi) \cap \xO} d_K^{2-k}dx<\infty.
\eal
Thus by \eqref{moser} and the last inequality we deduce that
$$
\sup_{B_{\frac{r'}{2}}(\xi) \cap K}v_l< C_1
$$
for some constant $C_1>0$ which does not depend on $l$. Thus
$$
w_l \leq C_1\phi_\mu \; , \qquad \text{in } B_{\frac{r'}{2}}(\xi) \cap\xO.
$$
By letting $l \to 0$, we derive
$$
u_+ \leq C_1\phi_\mu \; , \qquad \text{in } B_{\frac{r'}{2}}(\xi) \cap\xO.
$$
Thus by a covering argument we can find a constant $C_2>0$ such that
\bel{fragma}
u_+\leq C_2\phi_\mu \; , \qquad \text{in } B_{\frac{r}{2}}(\xi) \cap\xO.
\ee
This implies $v_0:=\frac{u_+}{\phi_\mu} <C_2$ in $B_{\frac{r}{2}}(\xi) \cap\xO$.

Using $\eta^2_r v_l$ as a test function in \eqref{subsolution} we can easily obtain
 $$\int_{B_\frac{r}{2}(\xi) \cap\xO}|\nabla (\eta_r v_l)|^2\xf_\xm^2dx+\xl_\xm \int_{B_\frac{r}{2}(\xi) \cap\xO}|\eta_r v_l|^2\xf_\xm^2dx\leq
 \frac{C}{r^2}\int_{B_\frac{r}{2}(\xi)\cap\xO}|v_l|^2\xf_\xm^2dx.$$
By \eqref{fragma} and by letting  $l \to0$ we obtain that $\eta_r v_0 \in H^1(\Omega ;\phi_\mu^2)$,  which in turn implies that $ \frac{\eta_r u_+}{\phi_\mu}\in H^1(\xO ;\xf_\xm^2)$.  Applying the same argument
to $-u$ we obtain
\[
u_-\leq C_2\phi_\mu \quad \text{in } B_{\frac{r}{2}}(\xi) \cap\xO,
\]
and
$ \frac{\eta_r u_-}{\phi_\mu}\in H^1(\Omega ; \phi_\mu^2)$. By using the fact that $u=u_+-u_-$, we obtain \eqref{regu1} and \eqref{regu2}.

We next prove the boundary Harnack inequality  \eqref{harnack}. Let $u$
be a nonnegative $L_\mu$-harmonic function and put
$v=\frac{u}{\phi_\mu}$. Then
$v \in H^1(B_{\frac{r}{4}}(\xi); \phi_\mu^2)$ and
$v$ satisfies
$$ -\phi_\mu^{-2}\div(\phi_\mu^2 \nabla v) + \lambda_\mu v = 0 ,
\qquad \mbox{ in }
 B_\frac{r}{4}(\xi) \cap \Omega.
$$
The function
$\hat v(x,t):=e^{\lambda_\mu t}  v(x)$ then satisfies
$$ \partial_t \hat v - \phi_\mu^{-2}\div(\phi_\mu^2 \nabla \hat v) = 0, \quad \text{in } B_{\frac{r}{4}}(\xi) \cap \Omega \times (0,\frac{r^2}{16}).
$$
By the  Harnack inequality \eqref{harnackpar},
\begin{align*}  \textrm{ess sup} &\Big\{ \hat v(t,x): (t,x) \in
(\frac{r^2}{64},\frac{r^2}{32}) \times
 \CB(\xi, \frac{r}{8}) \cap \Omega   \Big\} \\
&\leq C\,  \textrm{ess inf} \Big\{ \hat v(t,x): (t,x) \in
(\frac{3r^2}{64},\frac{r^2}{16}) \times \CB(\xi, \frac{r}{8} )
\cap \Omega    \Big\}.
\end{align*}
This implies \eqref{harnack}.
\end{proof}

\begin{lemma} \label{comparison}
Let $\mu\leq k^2/4$ and assume that $\lambda_{\mu}>0$.
Let $u\in H^1_{loc}(\xO)\cap C(\xO)$ be $L_\xm$-subharmonic in $\xO$. Assume that
\be
 \limsup_{\dist(x,F)\to 0}\frac{u(x)}{\tilde W(x)}\leq0,\quad\forall\; \text{compact} \; F\subset \partial\xO.\label{boundary0}
\ee
Then $u\leq0$ in $\xO.$
\end{lemma}
\begin{proof}
First we note that $u_+=\max(u(x),0)$ is a nonnegative $L_\mu$-subharmonic function in $\Omega$. Let $v=\frac{u_+}{\xf_\xm}$. In view of the proof of \eqref{regu1}, $v \in H^1_0(\xO; \xf_\xm^2 )$; moreover
by a straightforward calculation we have
\bel{su0}
-\div(\xf_\xm^2\nabla v)+\lambda_\mu \phi_\mu^2 v \leq 0 \quad \text{in } \Omega.
\ee
Since $v \in H^1_0(\Omega ; \phi_\mu^2 )$, we can use it as a test function for \eqref{su0} and obtain
$$\int_{\Omega}|\nabla v|^2\phi_\mu^2dx+\lambda_\mu \int_{\Omega}|v|^2\phi_\mu^2dx\leq0.$$
Hence $v=0$ and the result follows.
\end{proof}

\subsection{Existence and uniqueness}
The aim of this subsection is to prove existence and uniqueness of the
solution of $L_\xm u=f,$ with smooth boundary data. We also prove the
boundary Harnack inequalities and maximum principle for the operator
$L_\xm.$ Let us first define the notion of a weak solution.

\begin{definition}
Let $f\in L^2(\xO)$. We say that  $u$ is a weak solution of
\bel{LE1} L_\mu u=f\, , \quad \text{ in }  \Omega
\ee
if  $\frac{u}{\phi_\mu}\in H^1_0(\Omega ; \phi_\mu^2)$ and
\[
\int_\xO\nabla u \cdot \nabla\psi \, dx
-\xm \int_\xO\frac{u\psi}{d^2_K} dx=\int_\xO f\psi \, dx, \qquad\forall
 \psi\in C_c^\infty(\xO).
\]
\end{definition}
In the next lemma we give the first existence and uniqueness result.

\begin{lemma} \label{existence1}
Let $\mu\leq k^2/4$ and assume that $\lambda_{\mu}>0$.
For any $f\in L^2(\xO)$ there exists a unique weak solution $u$ of \eqref{LE1}. Furthermore there holds
\be
\int_\xO u^2dx\leq C\int_\xO f^2dx ,
\label{weaksolutionest}
\ee
where $C=C(\xl_\xm)>0.$
\end{lemma}
\begin{proof} We first observe that $u$ is a weak solution of \eqref{LE1} if and only if $v=\frac{u}{\phi_\mu}$ satisfies
\bel{lem}
\int_\Omega \phi_\mu^2\nabla v \cdot
\nabla\zeta dx+\lambda_\mu \int_\Omega \phi_\mu^2 v\zeta dx=\int_\Omega \phi_\mu  f\zeta dx \; , \qquad \forall \zeta \in  H^1_0(\xO ; \phi_\mu^2).
\ee
We define on $H_0^1(\Omega ; \phi_\mu^2 )$ the inner product
\[
\langle \psi, \zeta \rangle_{\phi_\mu^2} = \int_{\Omega} \phi_\mu^2(\nabla \psi \cdot \nabla \zeta + \lambda_\mu  \psi \, \zeta)dx
\]
and consider the bounded linear functional
$T_f$ on $H_0^1(\Omega ; \phi_\mu^2)$ given by
\[
T_f(\zeta)=\int_{\Omega} \phi_\mu f \zeta dx.
\]
Then \eqref{lem} becomes
\bel{lem1ex}
\langle v,\zeta \rangle_{\phi_\mu^2} =
T_f(\zeta) \quad \forall \zeta \in H_0^1(\Omega ;\phi_\mu^2).
\ee
By Riesz representation  theorem there exists a unique function
$v \in H_0^1(\Omega ; \phi_\mu^2)$ satisfying \eqref{lem1ex}.
 Furthermore, by choosing $\zeta=v$ in \eqref{lem} and then using
 Young's inequality, we obtain
\bel{lem2ex} \int_\xO\xf_\xm^2|\nabla v|^2 dx+\frac{\xl_\xm}{2}\int_\xO \phi_\mu^2 v^2 dx\leq C(\xl_\xm)\int_\xO f^2 dx.
\ee
By putting $u=\phi_\mu v$, we deduce that $u$ is the unique weak solution of \eqref{LE1}. Moreover, \eqref{weaksolutionest} follows from \eqref{lem2ex}.
\end{proof}

The next lemma will be useful in order to prove existence and uniqueness of solution for the equation $L_\xm u=f$ with zero boundary data.
\begin{lemma}\cite[Lemma 5.3]{gktai}\label{anisotita}
Let $\gamma<N$ and $\xa \in (0, \min\{ k,\gamma\})$.
There exists a positive constant $C=C(\xa,\xg,\xO, K)$ such that
\[
\sup_{x\in\xO}\int_\xO|x-y|^{-N+\xg}d_K^{-\xa}(y)dy< C.
\]
\end{lemma}

In the following lemma we prove the existence of solution for the equation  $L_\xm u=f$ with zero boundary data, as well as
pointwise estimates.
\begin{lemma} \label{existence2}
Let $\mu\leq k^2/4$ and assume that $\lambda_{\mu}>0$,
$\gamma_{-}-1 <b<0$ and $f\in L^\infty(\xO)$. Then there exists a unique  $u\in H^1_{loc}(\xO)\cap C(\xO)$ which satisfies
$L_\xm u=fd_K^{b}$ in the sense of distributions as well as
\eqref{boundary0}. Moreover, for any $\xg\in(-\infty, \xgp]\cap(-\infty,b+1)\cap(-\infty,0]$ there exists a positive constant $C=C(\xO,K,b, \xm,\xg)$ such that
\be
|u(x)|\leq C\|f\|_{L^\infty(\xO)}d(x)d_K^{\xg}(x),\qquad x\in \xO.\label{veryweakest}
\ee
\end{lemma}
\begin{proof}
We assume first that $f\geq0.$ Set $f_n=\min\{fd_K^{b},n\}$.
By Lemma \ref{existence1}, there exists a unique solution $u_n$ of $L_\xm v=f_n$ in $\xO$.
Furthermore, a standard argument yields the representation formula
$$
u_n(x)=\int_{\xO} G_{\xm}(x,y)f_n(y)dy.
$$
We assume first that $0<\xm<\frac{k^2}{4}.$ By \eqref{Greenest} we have
\bal
0&\leq\int_{\xO} G_{\xm}(x,y)f_n(y)dy\\
&\leq C_1\int_{\xO} \min\left\{\frac{1}{|x-y|^{N-2}},\frac{d(x)d(y)}{|x-y|^N}\right\} \left(\frac{d_K(x)d_K(y)}{\left(d_K(x)+|x-y|\right)\left(d_K(y)+|x-y|\right)}\right)^{\xg_+} f_n(y)dy\\
&\leq C d_K^\xgp(x)\int_{\xO}|x-y|^{-N+2-2\xgp}
\min\Big\{1,\frac{d(x)d(y)}{|x-y|^2}\Big\}d_K^\xgp(y) f_n(y)dy\\
&\quad + C \int_{\xO}|x-y|^{-N+2-\xgp}
\min\Big\{1,\frac{d(x)d(y)}{|x-y|^2}\Big\}d_K^\xgp(y) f_n(y)dy\\
&\quad + C  d_K^\xgp(x)\int_{\xO}|x-y|^{-N+2-\xgp}
\min\Big\{1,\frac{d(x)d(y)}{|x-y|^2}\Big\} f_n(y)dy\\
&\quad + C \int_{\xO}|x-y|^{-N+2}
\min\Big\{1,\frac{d(x)d(y)}{|x-y|^2}\Big\} f_n(y)dy\\
&=C (I_1+I_2+I_3+I_4).
 \eal
First we note that if $d_K(y)\leq \frac{1}{4}d_K(x)$ then $|x-y|\geq\frac{3}{4}d_K(x).$ Thus for $\gamma \leq \xgp$, we have
\bal
I_1&=d_K^\xgp(x)\int_{\xO\cap\{d_K(y)\leq \frac{1}{4}d_K(x)\}}|x-y|^{-N+2-2\xgp}\min\Big\{1,\frac{d(x)d(y)}{|x-y|^2}\Big\}d_K^\xgp(y) f_n(y)dy\\
&\quad +d_K^\xgp(x) \int_{\xO\cap\{d_K(y)> \frac{1}{4}d_K(x)\}} |x-y|^{-N+2-2\xgp}\min\Big\{1,\frac{d(x)d(y)}{|x-y|^2}\Big\}d_K^\xgp(y) f_n(y)dy\\
&\leq C \|f\|_{L^\infty(\xO)}d_K^{\xg}(x)\int_{\xO\cap\{d_K(y)\leq \frac{1}{4}d_K(x)\}}|x-y|^{-N+2-\xg-\xgp}\min\Big\{1,\frac{d(x)d(y)}{|x-y|^2}\Big\}d_K^{b+\xgp}(y)dy\\
&\quad +C \|f\|_{L^\infty(\xO)}d_K^{\xg}(x)\int_{\xO\cap\{d_K(y)> \frac{1}{4}d_K(x)\}} |x-y|^{-N+2-2\xgp}\min\Big\{1,\frac{d(x)d(y)}{|x-y|^2}\Big\} d_K^{b-\xg+2\xgp}(y)dy\\
&\leq C \|f\|_{L^\infty(\xO)}d_K^{\xg}(x)d(x)\int_{\xO\cap\{d_K(y)\leq \frac{1}{4}d_K(x)\}}|x-y|^{-N-\xg-\xgp}d_K^{b+\xgp+1}(y)dy\\
&\quad +C \|f\|_{L^\infty(\xO)}d_K^{\xg}(x)d(x)\int_{\xO\cap\{d_K(y)> \frac{1}{4}d_K(x)\}} |x-y|^{-N-2\xgp} d_K^{b-\xg+2\xgp+1}(y)dy\\
&\leq C \|f\|_{L^\infty(\xO)}d_K^{\xg}(x)d(x)
\eal
where in the last inequalities we have used Lemma \ref{anisotita}.

Similarly we can prove that
 \begin{align*}
 I_1+I_2+I_3+I_4\leq C\|f\|_{L^\infty(\xO)}d_K^{\xg}(x)d(x)
 \end{align*}

Combining the above estimates, we deduce that for any $\xg\in(-\infty, \xgp]\cap(-\infty,b+1),$ there exists a positive constant
$C=C(\xO,K,\xm ,b,\xg)$ such that
\be
|u_n(x)|\leq C\|f\|_{L^\infty(\xO)}d(x)d_K^{\xg}(x),\qquad  x\in \xO.\label{veryweakest2a}
\ee
If we choose $\xg\in (\xgn,\xgp]\cap(\xgn,b+1),$ then we can show that
\be
\lim_{\dist(x,F)\to 0}\frac{d(x)d_K^{\xg}(x)}{\tilde W(x)}=0,\quad\forall\; \text{compact} \; F\subset \partial\xO.
\label{convestim}
\ee

Thus by the above inequality, \eqref{veryweakest2a} and applying
Lemma \ref{comparison}, we can easily show that $u_n\nearrow u$
locally uniformly in $\xO$ and in $H^1_{loc}(\xO).$ Furthermore, by
standard elliptic theory $u\in C^1(\xO)$ and, by
\eqref{veryweakest2a},
\be
|u(x)|\leq C\|f\|_{L^\infty(\xO)}d(x)d_K^{\xg}(x),\qquad  x\in \xO.\label{veryweakest3}
\ee
The uniqueness follows by \eqref{veryweakest3}, \eqref{convestim} and Lemma \ref{comparison}.

For the general case, we set $u=u_+-u_-$ where $u_{\pm}$ are the unique solutions of $L_\xm v=f_{\pm}d_K^{-b}$ in $\Omega \setminus K$ respectively, which satisfy \eqref{veryweakest}. Thus $u$ satisfies \eqref{veryweakest} and the result follows in the case $0<\xm<\frac{k^2}{4}$.

The proof in the cases $\mu=\frac{k^2}{4}$ and $\xm\leq0$ is similar and is omitted.
\end{proof}

The following lemma is the main result of this subsection.
\begin{lemma} \label{mainlemma1}
Let $\mu\leq k^2/4$ and assume that $\lambda_{\mu}>0$.
For any $h \in C(\partial \Omega)$ there exists a unique $L_{\xm}$-harmonic
function $u \in H^1_{loc}(\xO)\cap C(\xO)$ satisfying
\[
\lim_{x\in\Omega,\;x\rightarrow y\in\partial\xO}\frac{u(x)}{\tilde W(x)}=h(y)\qquad\text{uniformly in } y\in\partial\xO.
\]
Furthermore  there exists a constant $c=c(\xO,K)>0$
\[
\left|\!\left|\frac{u}{\tilde W}\right|\!\right|_{L^\infty(\Omega  )}\leq c\|h\|_{C( \partial\Omega)}.
\]
\end{lemma}
\begin{proof}
\noindent Uniqueness is a consequence of Lemma \ref{comparison}.

\emph{Existence.} We will only consider  the case $0<\xm<\frac{k^2}{4},$ the proof in the other cases is very similar. First we assume that $h\in C^2(\overline{\xO})$. Then a function $u\in  C^2(\xO)$ is
$L_\mu$-harmonic if and only if $v:=\tilde W h  -u$ is a solution of
\be
\label{10}
L_\xm v=L_\xm (\tilde Wh)=h(L_\xm\tilde W) -2\nabla \tilde W\cdot \nabla h-\tilde W\xD h , \qquad \text{in } \Omega \, ;
\ee
Arguing as in the proof of Lemma \ref{subsup} we see that
there exists $C=C(\Omega, K, \mu,\beta_0)$ such that
\[
|L_\xm \tilde W|\leq Cd_K^{\xgn} , \qquad \text{in } \Omega.
\]
Hence  \eqref{10} can be written as
\[
L_\mu v = fd_K^{\xgn} , \qquad \text{in } \Omega ,
\]
with $\norm{f}_{L^\infty(\xO)}\leq C(\xgn,\xO,K)\norm{h}_{C^2(\overline{\xO})}.$

By Lemma \ref{existence2} there exists a unique solution $v$ of \eqref{10} that satisfies
\[
|v(x)|\leq C\|h\|_{C^2(\overline{\xO})}d(x)d_K^{\xg}(x),\qquad x\in \xO,
\]
for any $\gamma \in (\xgn,\xgp]\cap(\xgn,\xgn+1)$. Thus
\be
\label{tofragma}
\left|\frac{u(x)}{\tilde W(x)}-h(x)\right|\leq C\|h\|_{C^2(\overline{\xO})}\frac{d(x)d_K^{\xg}(x)}{\tilde W(x)} , \qquad  x\in \xO,
\ee
and the desired result follows in this case, since
\[
 \lim_{\dist(x,F)\to 0}\frac{d(x)d_K^{\xg}(x)}{\tilde W(x)}=0,\qquad\forall\; \text{compact} \; F\subset \partial\xO.
\]
for any $\xg\in (\xgn,\xgp]\cap(\xgn,\xgn+1).$

Suppose now that
$h\in C(\partial\xO)$. We can then find a sequence
$\{h_n\}_{n=1}^\infty$ of smooth functions in $\partial\xO$ such that $h_n\rightarrow h$ in $L^\infty(\partial\xO).$ Then there exist $H_n\in C^2(\overline{\xO})$
with value $H_n|_{\partial\xO}=h_n$
and $\|H_n\|_{L^\infty(\overline{\xO})}\leq C\|h_n\|_{L^\infty(\partial\xO)}$ where $C$ does not depend on $n$ or $h_n$.
By the previous case there exists a unique weak solution $u_n$ of $L_{\xm}u=0$ satisfying
\begin{align*}
\left|\frac{u_n(x)}{\tilde W(x)}-H_n(x)\right|\leq C\|H_n\|_{C^2(\overline{\xO})}\frac{d(x)d_K^{\xg}(x)}{\tilde W(x)},
\qquad\forall x\in \xO ,
\end{align*}
for some $C$ which does not depend on $n$ and $h_n$.

By \eqref{tofragma} and Lemma \ref{comparison}, we can easily show that
$$
\left| \frac{u_n(x)-u_m(x)}{\tilde W(x)}\right|\leq C\|h_n-h_m\|_{L^\infty(\partial\xO)}, \qquad  x \in \Omega ;
$$
thus $u_n\rightarrow u$ locally uniformly in $\xO$.

Now, let $y\in \partial\xO.$ Then
\[
\left|\frac{u(x)}{\tilde W(x)}-h(y)\right|
\leq \left|\frac{u(x)-u_n(x)}{\tilde W(x)}\right|+\left|\frac{u_n(x)}{\tilde W(x)}-h_n(y)\right|+\left|h_n(y)-h(y)\right|
\]
and the result follows by letting successively $x\to y$ and $n\to\infty$.
\end{proof}

\section{Martin kernel}
\subsection{$L_\mu$-harmonic measure}

Let $x_0\in\xO,$ $h\in C(\partial\xO)$ and denote $L_{\xm ,x_0}(h):=v_h(x_0)$ where $v_h$ is the solution of the Dirichlet problem (see Lemma \ref{mainlemma1})
\[
\left\{ \BAL
L_{\xm }v&=0,\qquad\mathrm{in}\;\;\xO, \\
\trti(v)&=h, \qquad\mathrm{in}\;\;\partial\xO,
\EAL \right.
\]
where $\trti(v)=h$ is understood in the sense of
Lemma \ref{mainlemma1} (cf. also \eqref{bdrcond3}).
By Lemma \ref{comparison}, the mapping
$h\mapsto L_{\xm ,x_0}(h)$ is a
positive linear functional on $C(\partial\xO).$ Thus there exists a unique
Borel measure on $\partial\Omega$, called {\it $L_{\xm }$-harmonic measure} in $\xO,$ denoted by $\xo^{x_0}$, such that
$$v_{h}(x_0)=\int_{\partial\xO}h(y) d\xo^{x_0}(y).$$
Thanks to the Harnack inequality the measures $\xo^x$ and $\xo^{x_0}$,
$x_0,\,x\in \xO$, are mutually absolutely continuous. For every fixed $x$ we denote the Radon-Nikodyn derivative by
\bel{Kmu}
K_{\xm}(x,y):=\frac{dw^x}{dw^{x_0}}(y),\qquad\mathrm{for}\;\xo^{x_0}\text{- almost all }y\in\partial\xO.
\ee

Let $\xi\in\partial\xO$. We set $\xD_r(\xi)=\partial\xO\cap B_r(\xi)$
and denote by $x_r=x_r(\xi)$ the point in $\Omega$ determined by
$d(x_r)=|x_r-\xi|=r$.
We recall here that $\xb_0=\xb_0(\xO,K,\xm)>0$ is small enough and has been defined in Lemma \ref{subsup}.

\begin{lemma}\label{Lemm10}
Let $\mu\leq k^2/4$ and assume that $\lambda_{\mu}>0$.
Let $0<r\leq \xb_0$. We assume that $u$ is a positive $L_{\mu }$-harmonic function in $\Omega$ such that\smallskip
\begin{eqnarray*}
& \ia &  \frac{u}{\tilde W}\in C(\overline{\xO\setminus B_r(\xi)}),\\
& \ib & \lim_{x \in \Omega , x \to x_0}\frac{u(x)}{\tilde W(x)}=0,\quad\forall x_0\in\partial\xO\setminus \overline{B_{r}(\xi)},
\text{uniformly with respect to $x_0$.}
\end{eqnarray*}
\noindent Then
\ba\BAL
c^{-1}\frac{u(x_r(\xi))}{G_\xm(x_r(\xi),x_{\frac{r}{16}}(\xi))}&G_\xm(x,x_{\frac{r}{16}}(\xi))\leq u(x)\\
&\leq c\frac{u(x_r(\xi))}{G_\xm(x_r(\xi),x_{\frac{r}{16}}(\xi))}G_\xm(x,x_{\frac{r}{16}}(\xi)), \qquad
\forall x\in\Omega\setminus\overline{B_{2r}(\xi)},
\EAL
\label{ekt2}
\ea
with $c>1$ depending only $\Omega$, $K$ and $\mu$.
\end{lemma}  %%%%%%%%%%%%%%%%%%%%%%%%%%%%%%%%%%%%%%%%%%%%%%%%%%%%%%%%%%%%%%PROOF%%%%%%%%%%%%%%%%%%%%%%%%%%%%%%%%%%%%%%%%%%%%%%%%%%%%%%%%%%%%%%%%%%%%%%%%%%%%%%%%%%%%%%%%%%%%%%%%%
\begin{proof} It follows from Lemma \ref{lemharnack} that there exists
$c>1$ such that
\bal\BAL
c^{-1}\frac{u(x_{2r}(\xi))}{G_\xm(x_{2r}(\xi),x_{\frac{r}{16}}(\xi))}&G_\xm(x,x_{\frac{r}{16}}(\xi))\leq u(x)\\
&\leq c\frac{u(x_{2r}(\xi))}{G_\xm(x_{2r}(\xi),x_{\frac{r}{16}}(\xi))}G_\xm(x,x_{\frac{r}{16}}(\xi)),
\qquad \forall x\in\Omega\cap\partial B_{2r}(\xi),
\EAL
\eal
Applying Harnack inequality between $x_{2r}(\xi)$ and $x_{r}(\xi)$ we obtain
\bal\BAL
c^{-1}\frac{u(x_r(\xi))}{G_\xm(x_r(\xi),x_{\frac{r}{16}}(\xi))}&G_\xm(x,x_{\frac{r}{16}}(\xi))\leq u(x)\\
&\leq c\frac{u(x_r(\xi))}{G_\xm(x_r(\xi),x_{\frac{r}{16}}(\xi))}G_\xm(x,x_{\frac{r}{16}}(\xi)),
\qquad \forall x\in\Omega\cap\partial B_{2r}(\xi).
\EAL
\eal
For $\xe>0$ let
\[
u_\xe(x)=u(x)-c\frac{u(x_{r}(\xi))}{G_\xm(x_{r}(\xi),x_{\frac{r}{16}}(\xi))}G_\xm(x,x_{\frac{r}{16}}(\xi))-\xe v_1(x),
\]
where $c$ is as above.
Then $u_\xe$ is $L_{\mu }$-harmonic and the function $u_\xe^+=\max(u_\xe,0)$ has compact support in $\Omega\setminus\overline{B_{2r}(\xi)}$.
Set $v_\xe=\frac{u_\xe}{\ei}$ and $v_\xe^+=\frac{u_\xe^+}{\ei}$. Using
$u_\xe^+$ as a test function we obtain
\[
\int_{\Omega\setminus\overline{B_{2r}(\xi)}} \nabla v_\xe
\cdot \nabla v_\xe^+\ei^2dx+\xl_\xm
\int_{\Omega\setminus\overline{B_{2r}(\xi)}} v_\xe v_\xe^+\ei^2dx=0.
\]
Letting $\xe\to0$ in the above equation we get
$$
\xl_\xm \int_\xO |v^+|^2\ei^2dx\leq 0,
$$
hence $u(x)-c\frac{u(x_{r}(\xi))}{G_\xm(x_{r}(\xi),x_{\frac{r}{16}}(\xi))}G_\xm(x,x_{\frac{r}{16}}(\xi))\leq 0$ for all
$x\in\Omega\setminus\overline{B_{2r}(\xi)}$.
The proof of the lower estimate in \eqref{ekt2} is similar and we omit it.
\end{proof}

\subsection{The Poisson kernel of $L_{\xm }$}
In this section we establish some properties of the Poisson kernel associated to $L_{\xm }$.

\begin{definition}
A function $\mathcal{K}$ defined in $\xO$ is called a kernel function for $L_\mu$ with pole at $\xi\in\partial\xO$ and basis at $x_0\in\xO$ if
\begin{eqnarray*}
& {\rm (i)} & \mbox{$\mathcal{K}(\cdot,\xi)$ is $L_{\xm }$-harmonic in $\Omega$,} \\
& {\rm (ii)} & \mbox{
 $ \frac{\mathcal{K}(\cdot,\xi)}{\tilde W(\cdot)}\in C(\overline{\xO}\setminus\{\xi\})$ and for any $\eta \in\prt\Omega\setminus\{\xi\}$
 we have }
\lim_{x \in \Omega,\; x \to \eta}
\frac{\mathcal{K}(x,\xi)}{\tilde W(x)}=0,\\
& {\rm (iii)} & \mbox{$\mathcal{K}(x,\xi)>0$ for each $x\in\xO$ and $\mathcal{K}(x_0,\xi)=1.$}
\end{eqnarray*}

\end{definition}

\begin{proposition}
Assume that $\lambda_{\mu}>0$.
There exists a unique kernel function for $L_{\xm }$
with pole at $\xi$ and basis at $x_0.$
\label{uniq}
\end{proposition}
\begin{proof}
The proof is similar to that of \cite[Theorem 3.1]{caffa} and we
include it for the sake of completeness.

\minsk
\noindent
\textit{Existence.} We shall prove that the function
$K_{\mu}(x,\xi)$ defined by \eqref{Kmu} has the required properties.

Fix $\xi\in\partial\xO$.  Set
\[
u_n(x)=\frac{\omega^x(\xD_{2^{-n}}(\xi))}{\omega^{x_0}(\xD_{2^{-n}}(\xi))},\qquad\forall n\geq n_0.
\]
Clearly $ u_n(x) \to K_{\mu}(x,\xi)$, $x\in\Omega$.
Since $u_n\geq0$, $L_{\xm }u_n=0$ in $\xO$ and $u_n(x_0)=1$ the
 sequence $\{u_n\}$ is locally bounded in $\xO$ by Harnack inequality.
 Hence we can find a subsequence, again denoted by $\{u_n\},$ which
 converges to $K_{\mu}(\cdot,\xi)$ locally uniformly in $\Omega$.

Let $\eta\in \partial\xO\setminus\{\xi\}$ and let $n_1\in\BBN$ be such
 that $\eta\in\partial\xO\setminus
 \overline{B_{2^{-n+1}}(\xi)},\;\forall  n\geq n_1$.
By Lemma \ref{Lemm10} we have
\[
u_n(x)\leq c\frac{u_n(x_{2^{-n_1}}(\xi))}{G_\xm(x_{2^{-n_1}},x_{2^{-n_1-4}}(\xi))}G_\xm(x,x_{2^{-n_1-4}}(\xi)),\qquad\forall x\in \Omega \setminus\overline{B_{2^{-n_1+1}}(\xi)},
\]
which implies
\[
K_{\mu}(x,\xi)
\leq c\frac{u_n(x_{2^{-n_1}}(\xi))}{G_\xm(x_{2^{-n_1}},x_{2^{-n_1-4}}(\xi))}G_\xm(x,x_{2^{-n_1-4}}(\xi)),\qquad\forall x\in \Omega\setminus\overline{B_{2^{-n_1+1}}(\xi)}.
\]
It follows that
$$
\lim_{x \in \Omega ,\; x \to \eta}\frac{K_{\mu}(x,\xi)}{\tilde W(x)} = 0,
$$
hence $K_{\mu}(x,\xi)$ is a kernel function for $L_\mu$ with pole at $\xi$ and basis at $x_0$.

\minsk
\noindent
\textit{Uniqueness.} Assume $f$ and $g$ are two kernel functions
for $L_{\xm}$ in $\xO$ with pole at $\xi$ and basis at $x_0$.
Let $0<r<\xb_0$. By Lemma \ref{Lemm10} and
the properties of $f$ and $g$ there holds
\[
\frac{1}{c'}\frac{f(x_r(\xi))}{g(x_r(\xi))}\leq
\frac{f(x)}{g(x)}\leq c'\frac{f(x_r(\xi))}{g(x_r(\xi))} \; ,
\qquad\forall x\in\xO\setminus\overline{B_{2r}(\xi)}.
\]
In particular we can obtain if we take $x=x_0$
$$\frac{f(x_r(\xi))}{g(x_r(\xi))}\leq  c',$$
and hence
$$
\frac{f(x)}{g(x)}\leq  c'^2=:c \, , \qquad\forall x\in  \xO.
$$
We derive that for  any two kernel functions $f$ and $g$ for $L_{\xm}$ with pole at $\xi$ and basis at $x_0$ there holds
\[
f(x)\leq cg(x)\leq c^2f(x) \, , \qquad\quad  x\in  \xO.
\]
Obviously $c\geq1.$ If $c=1$ the result is proved. If $c>1$ then
we set $A=\frac{1}{c-1} $ and $f+A(f-g)$ is also a kernel function for $L_{\xm}$ with pole at $\xi$ and basis at $x_0$.
Repeating the argument for the functions $f+A(f-g)$ and $g$
 we obtain that
\[
f+A(f-g)+A\big(f-g+A(f-g)\big),
\]
is also a kernel function with pole at $\xi$ and basis at $x_0$.
Proceeding in this manner we conclude that for each positive integer
$k$ there exist nonnegative numbers
$a_{1k},...,a_{kk}$ such that
\[
f+\Big(kA+\sum_{i=1}^ka_{ik}\Big)(f-g)
\]
is a kernel function with pole at $\xi$ and basis at $x_0$.
Hence
\[
f+\Big(kA+\sum_{i=1}^ka_{ik}\Big)(f-g)\leq c f.
\]
This last inequality can hold  for all $k$ only if $f\equiv g$.
\end{proof}
\begin{proposition}
Assume that $\lambda_{\mu}>0$.
For any $x\in\Omega$, the function $\xi\mapsto K_{{\xm }}(x,\xi)$ is continuous on $\prt\xO$.
\end{proposition}
\begin{proof}
The proof is an adaptation of that of \cite[Corollary 3.2]{caffa}.
Suppose that $\{\xi_n\}$ is a sequence converging to $\xi$.
Then the sequence $\{K_{\xm}(\cdot,\xi_n)\}$ of positive solutions of $L_{\mu}u=0$ in $\Omega$ has a subsequence which converges locally uniformly in $\xO$ to a  positive $L_{\mu}$-harmonic function.
Moreover, for any $r>0,$ $\frac{K_{\xm}(x,\xi_n)}{\tilde W(x)}$ converges to zero uniformly in $n$ as $x\rightarrow \eta\in\partial\xO\setminus B_r(\xi)$.
Hence the limit function of the subsequence is the kernel function
$K_{{\xm }}(x,\xi)$.
By the uniqueness of the kernel function we conclude that the convergence
$$
K_{{\xm }}(x,\xi_n)\rightarrow K_{{\xm }}(x,\xi)
$$
holds for the entire sequence $\{\xi_n\}$.
\end{proof}

We can now identify the Martin boundary and topology with their classical
analogues. We begin by recalling the definitions of the Martin boundary and
related concepts.

Let $x_0\in\Omega$ be fixed. For $x,\;y\in \Omega$ we set
$$\mathcal{K}_\xm(x,y):=\frac{G_{L_{\xm }}(x,y)}{G_{L_{\xm }}(x_0,y)}.$$

Consider the family
of sequences $\{y_k\}_{k\geq1}$ of points of $\xO$ without cluster points in
$\xO$ for which $\mathcal{K}_\xm(x,y_k)$ converges in $\xO$ to a $L_{\mu}$-harmonic
function, denoted by $\mathcal{K}_\xm(x,\{y_k\})$. Two
such sequences ${y_k}$ and ${y_k'}$ are called equivalent if $\mathcal{K}_\xm(x,\{y_k\})=\mathcal{K}_\xm(x,\{y_k'\})$
and each equivalence class is called an element of the Martin boundary
$\xG$. If $Y$ is
such an equivalence class (i.e., $Y\in \xG$) then $\mathcal{K}_\xm(x,Y)$ will denote the corresponding
harmonic limit function. Thus each $Y\in\xO\cup\xG$ is associated with a unique
function $\mathcal{K}_\xm(x,Y).$ The Martin topology on $\xO\cup\xG$ is given by the metric
$$
\xr(Y,Y')=\int_{A}\frac{|\mathcal{K}_\xm(x,Y)-\mathcal{K}_\xm(x,Y')|}{1+|\mathcal{K}_\xm(x,Y)-\mathcal{K}_\xm(x,Y')|}dx ,
\qquad Y,Y'\in \xO\cup \xG,
$$
where $A$ is a small enough neighbourhood of $x_0$.
The function $\mathcal{K}_\xm(x,Y)$ is a $\xr$-continuous
function of $Y\in\xO\cup\xG$ for any fixed $x \in\xO$.
Moreover $\xO\cup\xG$ is compact and complete with respect
to $\xr$, $\xO\cup\xG$ is the $\xr$-closure of $\xO$ and the $\xr$-topology
is equivalent to the Euclidean topology in $\xO$.

\begin{proposition} \label{martinest}
Assume that $\lambda_{\mu}>0$.
There is a one-to-one correspondence between the Martin boundary of $\xO$ and the Euclidean boundary $\partial\xO .$ If $Y \in\xG$ corresponds
to $\xi\in \partial\xO$ then $\mathcal{K}_\xm(x,Y)=K_{\xm}(x,\xi).$ The Martin topology on $\Omega  \cup \xG$ is equivalent
to the Euclidean topology on $\Omega \cup \partial\xO.$
\end{proposition}
\begin{proof}
The proof is similar as the one of Theorem 4.2 in \cite{hunt} and we
include it for the sake of completeness.
By uniqueness of the kernel function we have that
$$\mathcal{K}_\xm(x,\{y_k\})=K_{\xm}(x,\xi),$$
where $\{y_k\}$ is a sequence in $\xO$ such that $y_k\rightarrow \xi\in\partial\xO.$ It follows that each
point of $\xG$ may be associated with a point of $\partial\xO.$ Lemma \ref{Lemm10} clearly shows that $K_{\xm}(\cdot,\xi)\neq K_{\xm}(\cdot,\xi')$ if $\xi\neq\xi'$. Hence, the functions $\mathcal{K}_\xm(x,y_k)$ cannot converge if the
sequence $\{y_k\}$ has more than one cluster point on $\partial\xO$
and different points
of $\partial\xO$ must be associated with different points of $\xG.$ This gives a one-to-one correspondence
between $\partial\xO$ and $\xG$ with $\mathcal{K}_\xm(x,Y)=K_{\xm}(x,\xi)$ when $Y \in\xG$ corresponds
to $\xi \in \partial\xO.$
If  $\xi_k\rightarrow \xi$ in the Euclidean topology then
$\mathcal{K}_\xm(x,Y_k)\rightarrow\mathcal{K}_\xm(x,Y)$ and,
therefore, $Y_k\rightarrow Y$ in the $\xr$-topology by
Lebesgue's dominated convergence theorem.
On the other hand suppose that $Y_k\rightarrow Y$
in the $\xr$-topology. If $\xi_k$ does not converge to $\xi$ in the
Euclidean topology there is
a subsequence $\xi_{k_j}$ such that $\xi_{k_j}\rightarrow\xi'\neq\xi$ in
the Euclidean topology. Then
$Y_{k_j}\rightarrow Y'$ and $Y_{k_j}\rightarrow Y$ in the $\xr$-
topology with $Y\neq Y',$ which is impossible. Therefore, the Martin
$\xr$-topology on $\xO\cup \xG$ is equivalent to the Euclidean
topology on $\xO\cup\partial \xO.$
 \end{proof}

\noindent {\bf\em{Proof of Theorem \ref{poisson}.}}
The result follows immediately by Proposition \ref{martinest} and Proposition \ref{green}. $\hfill\Box$

\medsk

The next lemma will be used to prove the representation formula
of Theorem \ref{Lemm11intro}.
\begin{lemma}\label{tyx}
Assume that $\lambda_{\mu}>0$.
Let $F \subset\partial\xO$ and $D$ be an open smooth neighbourhood of $F.$ We assume $\xO\cap D\subset \xO_\xb$ for some $\xb>0.$ Let $u$ be a positive $L_\xm$-harmonic function in $\xO.$ There exists a $L_\xm$-superharmonic function $V$ such that
$$
V(x)=\left\{\BA {lll}v(x),\qquad&\text{in }\;\xO\setminus D,\\[1mm]
u(x),\qquad&\text{in }\;\xO\cap \overline{D},
\EA\right.
$$
where $v$ satisfies
$$
\left\{
\begin{array}{ll}
L_\mu v=0, & \text{in }\, \xO\setminus\overline{D},\\[1mm]
\lim_{x \in \xO\setminus\overline{D} ,\; x \to y}v(x)=u(y), & \forall y\in \partial D\cap\xO , \\[1mm]
\lim_{x \in \xO\setminus\overline{D},\; x\to y}\frac{v(x)}{\tilde W(x)}=0,
 & \forall y\in\partial\Omega  \setminus \overline{D}.
\end{array}
\right.
$$
\end{lemma}
\begin{proof}
The function $u$ is  $C^2$ in $\xO$ since it is $L_\xm$-harmonic. We assume that $\{r_n\}_{n=0}^\infty$ is a decreasing sequence $r_n\searrow0$ and $r_1<\frac{\xb_0}{16}$. We set
$D_{r_n}=\{\xi \in  \partial D\cap\xO : d(\xi)>2r_n\}$.

Let $0\leq\eta_n\leq1$ be a smooth function such that $\eta_n=1$ in  $\overline{D}_{r_n}$ with compact support in $D_{\frac{r_n}{2}}$. In view of the proof of Lemmas \ref{mainlemma1} and \ref{existence1}, for $m>n$, we can find a  unique solution $v_{n,m}$ of
\[
\left\{
\begin{array}{ll}
L_\mu v =0, & \text{in }\, (\Omega \setminus \overline{\xO}_{\frac{r_m}{2}})\setminus \overline{D},\\[1mm]
\lim_{x \to y}v(x)=\eta_n(y)u(y), & \forall y\in \partial D\cap(\Omega \setminus \overline{\xO}_{\frac{r_m}{2}}) , \\[1mm]
\lim_{x\to y}v(x)=0, &\forall y\in(\partial \xO_{\frac{r_m}{2}})\setminus \overline{D}.
\end{array}
\right.
\]
By comparison principle we have $0\leq v_{n,m} \leq u$
and $v_{n,m} \leq v_{n,m+1}$.
In addition, there exists a constant $c_n=c_n(\|u\|_{L^\infty(D_\frac{r_n}{2})}, \inf_{x\in D_\frac{r_n}{2}}\xf_\xm)$ such that
\[
0\leq v_{n,m}(x)\leq\min\{u(x),c_n\xf_\xm(x)\},
\qquad x\in (\Omega \setminus \overline{\xO}_{\frac{r_m}{2}})\setminus \overline{D}.
\]
Thus $v_{n,m}$ converges to some function $v_n$ as $m\to \infty$
locally uniformly in $\xO\setminus \overline{D}$ and
\bel{fragma12}
0\leq v_{n}(x)\leq\min\{u(x),c_n\xf_\xm(x)\},
\qquad  x\in \xO\setminus \overline{D} \, ,
\quad n\in \mathbb{N}.
\ee
Let $\xi\in \partial\xO\setminus \overline{D}$.
By \eqref{fragma12} and \eqref{harnack} there exists $r_0<\frac{\dist(\xi,\partial D)}{4}$ such that
$$
\frac{v_n(x)}{\phi_\mu(x)}\leq c\frac{v_n(y)}{\phi_\mu(y)}
\leq c\frac{u(y)}{\phi_\mu(y)}, \quad
\forall x,y\in B_{\frac{r_0}{4}}(\xi) \cap \Omega.
$$
Thus $v_n$ converges to some function $v$ locally uniformly
in $ \Omega$.
The desired result now follows easily.
\end{proof} \medskip

We consider a {\it smooth exhaustion} of $\Omega$, that is
an increasing sequence of bounded open smooth domains $\{\xO_n\}$ such that $\overline{\xO_n}\subset \xO_{n+1}$, $\cup_n\xO_n=\xO$ and $\mathcal{H}^{N-1}(\partial \Omega_n)\to \mathcal{H}^{N-1}(\partial \Omega)$.
The operator $L_{\xm }^{\xO_n}$ defined by
\be\label{redu1}
L_{\xm }^{\xO_n}u=-\xD u-\frac{\xm }{d^2_K}u
\ee
is uniformly elliptic and coercive in $H^1_0(\xO_n)$ and its first eigenvalue $\lambda_{\xm }^{\xO_n}$ is larger than $\lambda_{\xm }$.
For $h\in C(\prt \xO_n)$ the problem
\[
\left\{
\begin{array}{ll}
L_{\xm }^{\xO_n}v=0, &  \text{in } \, \xO_n , \\[1mm]
v=h, &  \text{on } \prt \xO_n,
\end{array}
\right.
\]
admits a unique solution which allows to define the $L_{\xm }^{\xO_n}$-harmonic measure on $\prt \xO_n$
by
\[
v(x_0)=\myint{\prt \xO_n}{}h(y)d\omega^{x_0}_{\xO_n}(y).
\]
Thus the Poisson kernel of $L_{\xm }^{\xO_n}$ is
\be\label{redu2'}
K_{L_{\xm }^{\xO_n}}(x,y)=
\myfrac{d\omega^{x}_{\xO_n}}{d\omega^{x_0}_{\xO_n}}(y),\qquad
x\in\xO_n , \;\;  y\in\prt \xO_n.
\ee
%%%%%%%%%%PROOF%%%%%%%%%%%%%%%%%%%%%%%%%%%%%%%%%%%%%%%%%%%%%%%%%%%%%%%%%%%%%%%%%%%%%%%%%%%%%%%%%%%%%%%%%%%%%%%%%
%%%%%%%%%%PROOF%%PROPOSITION%%%%%%%%%%%%%%%%%%%%%%%%%%%%%%%%%%%%%%%%%%%%%%%%%%%%%%%%%%%%%%%%%%%%%%%%%%%%%%%%%%%%
\begin{proposition}\label{22222}
Assume that $\lambda_{\mu}>0$ and $x_0\in \xO_1$. Then for every $Z\in C(\overline{\xO}),$
\be\label{2.27}
\lim_{n\rightarrow\infty}\int_{\partial \xO_n}
Z(x)\tilde{W}(x)d\omega^{x_0}_{\Omega_n}(x)=\int_{\partial \xO}Z(x)d\omega^{x_0}(x).
\ee
\end{proposition}
\begin{proof}
Let $n_0\in \mathbb{N}$ be such that
$$
\hbox{dist}(\partial \xO_n,\partial\xO)<\frac{\xb_0}{16},\qquad\forall n\geq n_0.
$$
For $n\geq n_0$ let $w_n$ be the solution of
\[
\left\{
\begin{array}{ll}
L_{\mu }^{\xO_n}w_n=0,  & \text{in }\,  \xO_n , \\[1mm]
w_n=\tilde W, & \text{on } \prt \xO_n.
\end{array}
\right.
\]
In view of the proof of Lemma \ref{mainlemma1}, there exists a positive constant $c=c(\xO,K,\xm )$ such that
$$
\norm{\frac{w_n}{\tilde W}}_{L^\infty(\xO_n)}\leq c,
\qquad \forall n\geq n_0.
$$
Furthermore
\be
w_n(x_0)=\int_{\partial \xO_n}\tilde W(x)d\omega^{x_0}_{\xO_n}(x)<c.\label{wn}
\ee
We extend $\omega^{x_0}_{\xO_n}$ to a Borel measure on $\overline{\xO}$ by
setting $\omega^{x_0}_{\xO_n}(\overline{\xO}\setminus \xO_n) = 0,$ and keep
the notation $\omega^{x_0}_{\xO_n}$ for the extension. Because of (\ref{wn})
the sequence $\{\tilde W\omega^{x_0}_{\xO_n}\}$ is bounded in the space
$\mathfrak M_b(\overline\Omega)$ of bounded Borel measures in $\overline\Omega$.
Thus there exists a subsequence, still denoted by
$\{\tilde W\omega^{x_0}_{\Omega_n}\}$, which converges narrowly to some
positive measure, say $\widetilde{\omega}$, which is clearly supported on $\partial\xO$ and satisfies
$\norm{\widetilde{\omega}}_{\mathfrak M_b}\leq c$ by (\ref{wn}).
Thus for every
$Z \in C(\overline{\xO})$ there holds
$$
\lim_{n\rightarrow\infty}\int_{\partial \xO_n}Z \, \tilde{W} d \omega^{x_0}_{\Omega_n}=\int_{\partial\xO}Zd \widetilde{\omega}.
$$
Setting $\xz=Z\lfloor_{\prt\Omega}$ and
$$
z(x):=\int_{\partial\xO}K_{\xm}(x,y)\xz(y)d \omega^{x_0}(y)
$$
we then have
$$
\lim_{d(x)\to 0}\myfrac{z(x)}{\tilde W(x)}=\zeta\qquad \text{ and }
\qquad z(x_0)=\int_{\partial\xO}\zeta d \omega^{x_0}.
$$
By Lemma \ref{mainlemma1}, $\frac{z}{\tilde W}\in C(\overline{\xO})$.
Since $\frac{z}{\tilde W}\lfloor_{\prt \xO_n}$ converges uniformly to
$\zeta$ as $n\to\infty$, there holds
$$
z(x_0)=\int_{\partial \xO_n}z\lfloor_{\prt \xO_n}d \omega^{x_0}_{\xO_n}=
\int_{\partial \xO_n}\tilde W\frac{z\lfloor_{\prt \xO_n}}{\tilde W}
d \omega^{x_0}_{\xO_n}\to \int_{\partial\xO}\zeta d \tilde\omega ,
\qquad \text{ as }\;n\to\infty.
$$
It follows that
$$
\int_{\partial\xO}\xz d \widetilde{\omega}=\int_{\partial\xO}\xz d \omega^{x_0},
\qquad\forall\xz\in C(\partial\xO).
$$
Consequently $d\widetilde{\omega}=d \omega^{x_0}.$ Because the limit does
not depend on the subsequence it follows that the whole sequence
${\tilde W(x)d\omega^{x_0}_{\xO_n}}$ converges weakly to $\xo^{x_0}.$
This implies \eqref{2.27}.
\end{proof}

%%%%%%%%%%%%%%%%%%%%%%%%%%%%%%%%%%%%%%%%%%%%%%%%%%%%%%%%%%%%%%%%%%%%%%%%%%%%%%%%%%%%%%%%%%%%%%%%%%%%%%%%%%%%%%%%%%%%%%%%%%%%%%%%%%%%%%%%%%%%%%%%%%%%%%%%%%%%%%%%%%%%%%%%%%%%%%

\noindent {\bf\em{Proof of Theorem \ref{Lemm11intro}.}}
The proof which is presented below follows the ideas of the one of \cite[Th. 4.3]{hunt}.
Let $B$ be a relatively closed subset of $\Omega$. We define
\[
R^B_u(x):=\inf\big\{\psi(x):\;\psi\;\text{is a
nonnegative supersolution in}\; \xO\;
\text{with}\;\psi\geq u\;\text{on}\;B\big\}.
\]
For a closed subset $F$ of $\partial\xO,$ we define
$$
\xn^x(F):=\inf\{R^{\xO\cap\overline{G}}_u(x):\;F\subset G,\;G\;\text{open in}\;\mathbb{R}^N \}.
$$
The set function $\xn^x$ defines a regular Borel
measure on $\partial\xO$ for each fixed $x\in\xO$.
Since $\xn^x(F)$ is a positive $L_\xm$-harmonic function in $\xO$
the measures $\xn^x$, $x\in\Omega$, are mutually
absolutely continuous by Harnack inequality. Hence,
$$
\xn^x(F)=\int_Fd\xn^x(y)=\int_F\frac{d\xn^x}{d\xn^{x_0}}d\xn^{x_0}(y).
$$
We assert that $\frac{d\xn^x}{d\xn^{x_0}}$ = $K_{\xm}(x, y)$ for
$\xn^{x_0}$-a.e.  $y\in \partial\xO$. By Besicovitch's
theorem,
$$
\frac{d\xn^x}{d\xn^{x_0}}(y)=\lim_{r\to0}\frac{\xn^x(\xD_{r}(y))}{\xn^{x_0}(\xD_r(y))},
$$
for $\xn^{x_0}$-a.e.  $y\in \partial\xO$.
In view of the proof of Proposition \ref{uniq}, we can prove that the
function $\xn^x(\xD_{r}(y))$ is $L_\xm$-harmonic and
\[
\lim_{x \in \xO,\; x\to \xi}\frac{  \xn^x(\xD_{r}(y))  }{\tilde W(x)}=0,
\qquad\forall \xi\in\partial\Omega  \setminus \overline{\xD}_{r}(y).
\]
Proceeding as in the proof of Proposition \ref{uniq}, we may prove that $\frac{d\xn^x}{d\xn^{x_0}}$ is a kernel function, and by the
uniqueness of kernel functions the assertion follows. Hence
$$\xn^x(A)=\int_AK_{\xm}(x,y)d\xn^{x_0}(y),$$
for all Borel $A\subset\partial\xO$ and in particular
$$u(x)=\xn^x(\partial\xO)=\int_{\partial\xO}K_{\xm}(x,y)d\xn^{x_0}(y).$$
Suppose now that
$$
u(x)=\int_{\partial\xO}K_{\mu}(x,y)d\xn(y),
$$
for some nonnegative Borel measure $\xn$ on $\partial\xO$.
We will show that $\xn(F)=\xn^{x_0}(F)$ for any closed set $F\subset  \partial\xO$.

Choose a sequence of open sets $\{G_\ell \}$ in $\mathbb{R}^N$ such that $\cap_{\ell=1}^\infty G_\ell=F$ and
$$
\xn^{x}(F)=\lim_{l\rightarrow\infty}R^{\Omega  \cap \overline{G}_\ell}_u(x).
$$
Since
$$
R^B_u(x)\leq R^A_u(x), \qquad\text{if }\; B\subset A,
$$
we can choose $\{G_\ell\}$ so that
$\overline{G}_{\ell+1}\subset G_\ell,\;\forall \ell \geq1$ and
$ G_\ell$ to be a $C^2$ domain for all $\ell \geq1.$ In view of the proof
of Lemma \ref{tyx}, we may assume that
$R^{\Omega  \cap \overline{G}_\ell}_u(x)=V_\ell$ where $V_\ell$ is the
$L_\mu$-superharmonic in Lemma \ref{tyx} for $D=G_\ell$.
Furthermore we have that
$R^{\Omega \cap \overline{G}_\ell}_u(x)=u(x)$
in $\Omega \cap \overline{G}_\ell $ and
$R^{\Omega \cap \overline{G}_\ell}_u(x) \leq u(x)$
for all $x\in \Omega$.

We consider an increasing sequence of smooth domains
$\{\Omega_\ell \}$ such that
$\overline{\Omega_\ell}\subset \Omega_{\ell+1}$,
$\cup_{\ell=1}^\infty\Omega_\ell=\Omega$,
$ G_\ell\cap\xO\subset\overline{\xO}\setminus \xO_\ell $,
$\mathcal{H}^{N-1}(\partial \Omega_\ell) \to
\mathcal{H}^{N-1}(\partial \Omega)$.
Let $w^{x_0}_{\xO_n}$ be  the $L_\xm$-harmonic measure in
$\partial \xO_n$ (see \eqref{redu1}-\eqref{redu2'}).
%%%
Let $n>\ell$ and let $v_n$ be the unique solution of
\[
\left\{
\begin{array}{ll}
L_\xm v=0, & \mbox{ in }\xO_n , \\
v=R^{\Omega\cap\overline{G}_\ell}_u , &\mbox{ on }\partial \xO_n.
\end{array}
\right.
\]
Since $R^{\Omega \cap \overline{G}_\ell}_u(x)$
is a supersolution in $\xO$ we have
$R^{\Omega \cap \overline{G}_\ell}_u(x)\geq v_n(x)$, $x\in \xO_n$.
Hence
\[
R^{\Omega \cap \overline{G}_\ell}_u(x_0) \geq v_n(x_0) =\int_{\partial{\xO_n}}R^{\Omega \cap \overline{G}_\ell}_u(y)dw^{x_0}_{\xO_n}(y)
\geq\int_{\partial  \xO_n\cap G_\ell}R^{\Omega \cap \overline{G}_\ell}_u(y)dw^{x_0}_{\xO_n}(y).
\]
Now, by Lemma \ref{tyx},
\begin{align*}
\int_{\partial \xO_n\cap G_\ell}R^{\Omega \cap \overline{G}_\ell}_u(y)dw^{x_0}_{\xO_n}(y)
&=\int_{\partial \xO_n\cap G_\ell}u(y)dw^{x_0}_{\xO_n}(y)\\
&=\int_{\partial \xO_n\cap G_\ell}\int_{\partial\Omega}K_{\xm}(y,\xi)d\xn(\xi)dw^{x_0}_{\xO_n}(y)\\
&=\int_{\partial\Omega}\int_{\partial \xO_n\cap G_\ell}K_{\xm}(y,\xi)dw^{x_0}_{\xO_n}(y)d\xn(\xi)\\
&\geq\int_{F}\int_{\partial \xO_n\cap G_\ell}K_{\xm}(y,\xi)dw^{x_0}_{\xO_n}(y)d\xn(\xi).
\end{align*}
%%%
Let $\xi\in F$. We have
\begin{align*}
1=K_{\mu}(x_0,\xi)&=\int_{\partial \xO_n\cap G_\ell}K_{\xm}(y,\xi)dw^{x_0}_{\xO_n}(y)+\int_{\partial{\xO_n}\setminus G_\ell}K_{\xm}(y,\xi)dw^{x_0}_{\xO_n}(y)\\
\end{align*}
But
\[
K_{\mu}(y,\xi)\leq c d(y)d_K^{\xgp}(y) ,
\qquad\forall y\in\partial{\xO_n}\setminus G_\ell,
\]
thus by Proposition \ref{22222} we have that
$$\lim_{n\rightarrow\infty}\int_{\partial{\xO_n}\setminus G_\ell}K_{\xm}(y,\xi)dw^{x_0}_{\xO_n}(y)=0.$$
Combining all the above inequality and using Lebesgue's dominated convergence theorem we obtain
$$R^{\Omega \cap \overline{G}_\ell}_u(x_0)\geq\lim_{n\rightarrow\infty}\int_{F}\int_{\partial \xO_n \cap G_\ell}K_{\xm}(y,\xi)dw^{x_0}_{\xO_n}(y)d\xn(\xi)=\xn(F),$$
which implies
$$\xn^{x_0}(F)\geq\xn(F).$$

 For the opposite inequality, let $m<\ell$. Then
 \begin{align*}
R^{\Omega \cap \overline{G}_\ell}_u(x_0)&=\int_{\partial \xO_\ell}R^{\Omega \cap \overline{G}_\ell}_u(y)dw^{x_0}_{\xO_\ell}(y)\\
&=\int_{\partial \xO_\ell \cap G_{m}}R^{\Omega \cap \overline{G}_\ell}_u(y)dw^{x_0}_{\xO_\ell}(y)+\int_{\partial \xO_\ell\setminus  G_{m}}R^{\Omega \cap \overline{G}_\ell}_u(y)dw^{x_0}_{\xO_\ell}(y).
\end{align*}
In view of the proof of Lemma \ref{tyx}, we have that
$$
R^{\Omega \cap \overline{G}_\ell}_u(x)\leq C d(x)d_K^{\xgp}(x),\qquad\forall x\in \xO\setminus G_{m}.
$$
 Thus by Proposition \ref{22222} we have
$$
\lim_{l \rightarrow\infty}\int_{\partial \xO_\ell \setminus  G_{m}}R^{\Omega \cap \overline{G}_\ell}_u(y)dw^{x_0}_{\xO_\ell}(y)=0,
$$
and
\begin{align*}
\int_{\partial \xO_\ell \cap G_{m}}R^{\Omega \cap \overline{G}_\ell}_u(y)dw^{x_0}_{\xO_\ell}(y)&\leq\int_{\partial \xO_\ell\cap G_{m}}u(y)dw^{x_0}_{\xO_\ell}(y)\\
&=\int_{\partial \xO_\ell \cap G_{m}}\int_{\partial\Omega}K_{\xm}(y,\xi)d\xn(\xi)dw^{x_0}_{\xO_\ell}(y)\\
&=\int_{\partial\Omega}\int_{\partial \xO_\ell \cap G_{m}}K_{\xm}(y,\xi)dw^{x_0}_{\xO_\ell}(y)d\xn(\xi).
\end{align*}
If $\xi\in\partial\Omega\setminus G_{m-1}$ we have again by Proposition \ref{22222} that
$$\lim_{\ell \rightarrow\infty}\int_{\partial \xO_\ell \cap G_{m}}K_{\xm}(y,\xi)dw^{x_0}_{\xO_\ell}(y)=0.$$
%%%
If $\xi\in\partial\xO\cap G_{m}$, then
$$
\int_{\partial \xO_\ell \cap G_{m}}K_{\xm}(y,\xi)dw^{x_0}_{\xO_\ell}(y)\leq K_{\xm}(x_0,\xi)=1.
$$
Combining all the above inequalities, we obtain
$$\xn^{x_0}(F)=\lim_{\ell\rightarrow\infty}R^{\Omega \cap \overline{G}_\ell}_u(x_0)\leq \int_{\partial\Omega\cap\overline{G}_{m-1}}K_{\xm}(x_0,\xi)d\xn(\xi)=\xn(\partial\Omega\cap \overline{G}_{m-1}),$$
which implies
$$\xn^{x_0}(F)\leq \xn(F). $$
Thus we get the desired result.
$\hfill\Box$

\section{Boundary value problem for linear equations}

\subsection{Boundary trace}

We first examine the boundary trace of $\BBK_\mu[\nu]$.
\begin{lemma}
\label{tracemartin}
Let $\mu\leq k^2/4$ and assume that $\lambda_{\mu}>0$.
Then
for any $\nu \in \GTM(\partial \Omega)$ we have $\tr(\BBK_\mu[\nu])=\nu$.	
\end{lemma}
\begin{proof}
The proof is the similar to the proof of Lemma 2.2 in \cite{MV} and
we omit it.
\end{proof}
\begin{lemma} \label{tracegreen}
Let $\mu\leq k^2/4$ and assume that $\lambda_{\mu}>0$.
For $\tau\in\mathfrak{M}(\xO;\ei)$  we set $u=\mathbb{G}_{\mu}[\tau].$ Then $u\in W^{1,p}_{loc}(\xO)$ for every $1<p<\frac{N}{N-1}$
and $\tr(u)=0$.
\end{lemma}
\begin{proof}
By \cite[Theorem 1.2.2]{MVbook}, $u\in W^{1,p}_{loc}(\xO)$ for every $1<p<\frac{N}{N-1}$. Let $\{\xO_n\}$ be a smooth exhaustion of $\xO$
(cf. \eqref{redu1}) and $v_n$ be the unique solution of
$$
 \left\{ \BAL
L_{\xm }^{\xO_n}v&=0,\qquad&&\text{in } \xO_n ,\\
v&=u, \qquad&&\text{on } \prt \xO_n.
\EAL \right.
$$
We note here that $v_n(x_0)=\int_{\prt \xO_n} u(y) d\omega^{x_0}_{\xO_n}(y)$.
We first assume that $\tau\geq0$.  Let $G^{\xO_n}_{\mu}$ be the Green kernel of $L_\xm$ in $\xO_n$.
Then $G^{\xO_n}_{\xm}(x,y)\nearrow G_{\mu}(x,y)$ for any $x,y\in \xO$,
$x\neq y$. Putting $\tau_n=\tau|_{\xO_n}$ and $u_n=\BBG_\mu^{\xO_n}[\tau_n]$ we then have
$u_n\nearrow u$ a.e. in $\xO$. By uniqueness we have that $u=u_n+ v_n$ a.e.
in $\xO_n$. In particular, $u(x_0)=u_n(x_0)+v_n(x_0)$ and therefore
$\lim_{n \to \infty}v_n(x_0)=0$. Consequently, $\tr(u)=0$.
	
In the general case, the result follows by linearity.
\end{proof}

\begin{theorem}
Let $\mu\leq k^2/4$ and assume that $\lambda_{\mu}>0$.
We then have \newline
$\ia$ Let $u$ be a positive $L_\mu$-superharmonic function in the sense of distributions in $\Omega$. Then $u \in L^1(\Omega;\ei)$ and there exist $\tau \in \GTM^+(\Omega;\ei)$ and $\nu \in \GTM^+(\partial\Omega)$ such that
\bel{reprweaksol} u=\BBG_{\mu}[\tau]+\BBK_{\mu}[\nu].
\ee
In particular, $u \geq \BBK_\mu[\nu]$ in $\Omega$ and $\tr(u)=\nu$.

\noindent
$\ib$ Let $u$   be a positive $L_\mu$-subharmonic function  in the sense of
distributions in $\Omega$. Assume that there exists a positive
$L_\mu$-superharmonic function $w$ such that $u \leq w$ in $\Omega$.
Then $u \in L^1(\Omega;\ei)$ and there exist $\tau \in \GTM^+(\Omega;\ei)$
and $\nu \in \GTM^+(\partial\Omega)$ such that
\bel{vGK2} u+\BBG_{\mu}[\tau]=\BBK_{\mu}[\nu].
\ee
In particular, $u \leq \BBK_\mu[\nu]$ in $\Omega$ and $\tr(u)=\nu$.
\end{theorem}
\begin{proof}
(i) Since $L_\mu u \geq 0$ in the sense of distributions in $\Omega$, there
exists a nonnegative Radon measure $\tau$ in $\Omega $ such that
$L_\mu u=\tau$ in the sense of distributions. By \cite[Lemma 1.5.3]{MVbook},
$u\in W^{1,p}_{loc}(\xO).$
	
Let $\{\xO_n\}$ be a smooth exhaustion of $\xO$ (cf. \eqref{redu1}).
Denote by $G_\mu^{\xO_n}$ and $P_\mu^{\xO_n}$ the Green kernel and the
Poisson kernel of $L_\mu$ in $\xO_n$ respectively
(recalling that $P_\mu^{\xO_n} = -\partial_{\bf n}G_\mu^{\xO_n}$).
Then $u=\BBG_{\mu}^{\xO_n}[\tau]+ \BBP_\mu^{\xO_n}[u]$, where
$\BBG_\mu^{\xO_n}$ and $\BBP_\mu^{\xO_n}$ are the Green operator
and the Poisson operator for $\xO_n$ respectively.
	
Since $\tau$ and $\BBP_\mu^{\xO_n}[u]$ are nonnegative and
$G_{\mu}^{\xO_n}(x,y)\nearrow G_{\mu}(x,y)$ for any $x,y\in \xO$, $x\neq y$,
we obtain $0 \leq \mathbb{G}_{\mu}[\tau]\leq u$  a.e. in  $\xO$. In particular, $0 \leq \mathbb{G}_{\mu}[\tau](x_0) \leq u(x_0)$ where $x_0 \in \Omega$ is a fixed reference point. This, together with the estimate $G_\mu(x_0,\cdot) \gtrsim \ei$ a.e. in $\xO$, implies $\tau \in \GTM(\Omega;\ei)$.
	
Moreover, we see that $u-\mathbb{G}_{\mu}[\tau]$ is a nonnegative $L_\mu$-
harmonic function in $\xO$. Thus by Theorem
\ref{Lemm11intro}
there exists a unique $\nu \in \GTM^+(\partial\xO)$ such that \eqref{reprweaksol} holds. \smallskip
	
(ii) Since $L_\mu u \leq 0$ in the sense of distributions in $\xO$, there
exists a nonnegative Radon measure $\tau$ in $\xO$ such that
$L_\mu u =-\tau$ in the sense of distributions.
By \cite[Lemma 1.5.3]{MVbook}, $u \in W^{1,p}_{loc}(\xO)$. Let $\xO_n$ and
$\BBP_\mu^{\xO_n}$ be as in (i).
Then  $u+\BBG_{\mu}^{\xO_n}[\tau]=\BBP_\mu^{\xO_n}[u]$. This, together with
the fact that $u \geq 0$ and  $\BBP_\mu[u] \leq w$, implies
$\BBG_\mu^{\xO_n}[\tau] \leq w$. By using a similar argument as in (i), we
deduce that $\tau \in \GTM(\xO;\ei)$ and there exists
$\nu \in \GTM^+(\partial \Omega )$ such that \eqref{vGK2} holds.
\end{proof}

\subsection{Boundary value problem for linear equations}

We recall (cf. \eqref{xkom}) that for $\mu\leq k^2/4$ we have
defined
\[
 {\bf X}_\mu(\Omega ,K):=\{ \zeta \in H_{loc}^1(\Omega): \phi_\mu^{-1} \zeta \in H^1(\Omega;\phi_\mu^{2}), \, \phi_\mu^{-1}L_\mu \zeta \in L^\infty(\Omega)  \}.
\]
\begin{lemma} \label{testfunc-prop}
Let $\mu\leq k^2/4$ and assume that $\lambda_{\mu}>0$.
Then any $\zeta \in {\bf X}_\mu(\Omega ,K)$ satisfies
$|\zeta| \leq c \ei$ in $\Omega$.
\end{lemma}
\begin{proof}
Let $\xz\in{\bf X}_\mu(\Omega ,K)$ and $g=L_{\xm }\zeta.$
Then there exist $C=C(\norm{g\ei^{-1}}_{L^\infty(\xO)},\xl_\xm)$ such that
$|g|\leq C \xl_\xm\ei$ in $\xO$.
Set  $\tilde\xz=C^{-1}\ei^{-1}\zeta$.
Then,
\bal
\int_\Omega \phi_\mu^2\nabla\tilde\xz \cdot
\nabla\psi \,  dx+
\lambda_\mu \int_\Omega \phi_\mu^2 \tilde\xz \psi \, dx=
\frac{1}{C}\int_\Omega \phi_\mu  g\psi \, dx\leq \xl_\xm\int_\xO\ei^2\psi \, dx
\; , \qquad \forall 0\leq\psi \in  H^1_0(\xO ; \phi_\mu^2).
\eal
By taking $\psi=(\tilde\xz -1)_+$ as test function in the above inequality,
we obtain that $\tilde\xz \leq 1,$ which implies $\xz\leq C\ei$ in $\xO$.
Applying the same argument to $-\xz$ completes the proof.
\end{proof}

\begin{lemma}\label{existence3}
Let $\mu\leq k^2/4$ and assume that $\lambda_{\mu}>0$.
Given $\tau\in\GTM(\xO;\ei)$ there exists a unique weak
solution $u$ of \eqref{NHLPintro} with $\xn=0$. Furthermore $u=\mathbb{G}_{\mu}[\tau]$ and there holds
	\begin{align}  \label{esti1}
	\| u \|_{L^1(\Omega;\ei)}  \leq \frac{1}{\lambda_\mu} \| \tau \|_{\GTM(\xO;\ei)}.
	\end{align}
\end{lemma}
\begin{proof}
{\emph{A priori estimate.}} Assume $u \in L^1(\Omega;\ei)$ is a weak
solution of \eqref{NHLPintro} with $\nu=0$.
Let $\zeta \in {\bf X}_\mu(\Omega,K)$ be such that $L_\mu\zeta=\sign( u)\ei$. By Kato's inequality,
$$
L_\mu |\zeta|\leq \sign(\zeta)L_\mu \zeta \leq \ei=L_\mu\Big(\frac{1}{\lambda_\mu}\phi_\mu \Big).
$$
Hence by Lemmas \ref{comparison} and \ref{testfunc-prop}
we deduce that $|\zeta|\leq \frac{1}{\xl_\xm} \ei$ in $\xO$. This, combined with \eqref{NHLPintro}
(for $\nu=0$) implies \eqref{esti1}.
\medskip
	
	\noindent {\emph{Uniqueness.}} The uniqueness follows directly from \eqref{esti1}. \medskip
	
	\noindent {\emph{Existence.}} Assume $\tau=fdx$ with $f \in L^\infty(\Omega)$ with compact support in $\xO$. The existence
of a solution $u$ follows by Lemma \ref{existence1}.
	
Since $f\in L^\infty(\Omega)$ has compact support in $\xO$, there exists a positive constant $c=c({\text{supp}(f), \|f\|_{\infty},
\xO,K,\xm})$ such that
$|f|\leq c\ei$.
It follows that $u\in  {\bf X}_\mu(\Omega)$ and therefore
$|u(x)|\leq C\ei(x)$, $x\in\xO$, by Lemma \ref{testfunc-prop}.

Next we will show that $u=\BBG_\mu[f]$. Set $w=\BBG_\mu[f]$.
We can easily show that $w$ satisfies $L_\xm w=f$ in the sense of
distributions in $\xO$ and by \eqref{Greenest} there exists a positive
constant $C$ such that $|w(x)| \leq C\ei(x)$  for all $x\in\xO$. Therefore,
$$
\lim_{\dist(x,F) \to 0}\frac{|u(x)-w(x)|}{\tilde W(x)} \leq  C\lim_{\dist(x,F) \to 0}\frac{\ei(x)}{\tilde W(x)} =0
$$
for any compact set $F \subset \partial \Omega$.
Furthermore, we note that $|u-w|$ is $L_\mu$-subharmonic in $\xO$. Hence from Lemma \ref{comparison}, we deduce that $|u-w|=0$, i.e. $u=w$ in $\xO$.
	
	Now assume that $\tau =fdx$ with $f\in L^1(\Omega;\ei)$. Let $\{\xO_n\}$ be a smooth exhaustion of $\xO$ (see \eqref{redu1}). Set $f_n=\chi_{\xO_n}g_n(f)\in L^\infty(\Omega),$ where

\bal
g(t)=\left\{
\begin{array}{ll}
 n, &\text{if}\; t\geq n, \\
t,  &\text{if}\; -n<t<n,\\
-n,  &\text{if}\; t\leq- n .
\end{array}
\right.
\eal
Then $f_n\rightarrow f$ in $ L^1(\Omega;\ei)$. Put $u_n:=\BBG_\mu[f_n]$. Then
\[
 \int_{\Omega}u_n L_{\xm }\zeta \, dx=\int_{\Omega} f_n \zeta  \, dx ,
\qquad\forall \xi \in\mathbf{X}_\xm(\xO,K).
\]
	By \eqref{esti1} we can easily prove that $u_n=\BBG_\mu[f_n] \to \BBG_\mu[f]:=u$ in $L^1(\Omega;\ei)$. Then by letting $n \to \infty$ and using Lemma \ref{testfunc-prop}, we deduce the desired result when $f\in L^1(\Omega;\ei)$.
	
Assume finally that $\tau \in \GTM(\xO;\ei)$. Let $\{f_n\}$ be a sequence in  $L^1(\Omega;\ei)$ such that
$f_n\rightharpoonup \tau $ in $C_{\ei}(\xO)$,
where
$C_{\ei}(\xO)=\{ \zeta \in C(\xO) : \ei \zeta  \in L^\infty(\Omega)\}$.
Then proceeding as above we can prove that
$u_n=\BBG_\mu[f_n] \to \BBG_\mu[\tau]:=u$ in $L^1(\Omega;\ei)$
and $u$ satisfies \eqref{NHLPintro} with $\xn=0.$
\end{proof}

\noindent
{\bf \em Proof of Theorem \ref{thm:val}.}
First we note that by Theorem \ref{poisson}, we can easily show that
\ba\label{210}
\| \BBK_\mu[|\nu|] \|_{L^1(\Omega;\ei)}
\leq c \| \nu \|_{\GTM(\partial\Omega )}.
\ea

\noindent{\textit{Existence.}}
The existence and \eqref{reprweaksol1} follow from Lemma \ref{existence3} and \eqref{210}. \medskip
	
\noindent \textit{A priori estimate \eqref{esti2}.}
This follows from \eqref{210}, \eqref{esti1} and \eqref{reprweaksol1}. \medskip

\noindent {\textit{Uniqueness.}}  Uniqueness follows from \eqref{esti2}.  \medskip
	
\noindent {\emph{Proof of estimates \eqref{poi4}--\eqref{poi5}.}}
Assume $d\tau=fdx+d\rho$ and let $\{\xO_n\}$ be a smooth exhaustion of $\xO$. Let $v_\tau^n$ be the solution of
$$ \left\{
\begin{array}{ll}
	L_{\xm }^{\xO_n}v=0, & \text{ in } \xO_n\\
	v=\mathbb{G}_{\mu}[\tau], & \text{ on } \prt \xO_n,
\end{array}
\right.
$$
and $w_\xn=\mathbb{K}_{\xm}[\nu]$. Then, by uniqueness,
$u=\mathbb{G}^{\xO_n}_{\mu}[\tau|_{\xO_n}]+v_\tau+w_\xn$ and
$|u|\leq \mathbb{G}_{\mu}[|\tau|]+w_{|\xn|}$ $\mathcal{H}^{N-1}$-a.e. on $\partial \xO_n$ (here on $\mathcal{H}^{N-1}$ denotes the Hausdorff measure on $\partial \xO_n$).
	
Let $\eta \in C_c^2(\overline{\xO_n})$ be non-negative and such that
$\eta=0$ on $\partial\Omega_n$.
By \cite[Proposition 1.5.9]{MVbook},
\begin{align*}
	\int_{\xO_n}|u|L_\xm \eta \, dx
	 \leq \int_{\xO_n}\sign(u)f\eta \, dx+ \int_{\xO_n}\eta d|\rho|-\int_{\partial \xO_n}|u|\frac{\partial\eta}{\partial {\bf n}^n}dS
	\end{align*}
where ${\bf n}^n$ is the unit outer normal vector on $\partial \xO_n.$
Since $|u| \leq \BBG_\mu[|\tau|] + w_{|\nu|}$ a.e. on
$\partial \xO_n$ and $\frac{\partial \eta}{\partial {\bf n}^n} \leq 0$
on $\partial \xO_n$, using integration by parts we obtain
\begin{align*}
	-\int_{\partial \xO_n}|u|\frac{\partial\eta}{\partial {\bf n}^n}dS \leq -
\int_{\partial \xO_n}(\mathbb{G}_{\mu}[|\tau|]+w_{|\xn|})
\frac{\partial\eta}{\partial {\bf n}^n}dS= \int_{\xO_n}(v_{|\tau|}^n+w_{|\xn|})L_\xm\eta \, dx.
	\end{align*}
	Hence
	\begin{align}
\int_{\xO_n}|u|L_\xm \eta dx \leq \int_{\xO_n}\sign(u)f\eta \, dx+ \int_{\xO_n}\eta d|\rho| +\int_{\xO_n}(v_{|\tau|}^n+w_{|\xn|})L_\xm\eta \, dx.
\label{ee1}
\end{align}
Let $\xz\in \mathbf{X}_\xm(\xO,K)$, $\zeta > 0$ in $\xO$. Let $z_n$ and $\zeta_n$ be respectively solutions of
$$
\left\{ \BAL
L_{\xm }z_n&=L_{\xm }\xz, \quad&&\text{in } \xO_n , \\
z_n&=0 , \quad&&\text{on } \prt \xO_n,
\EAL \right. \quad\quad
 \left\{ \BAL
L_{\xm }\zeta_n&=\sign(z_n) L_{\xm }\xz , \quad&&\text{in } \xO_n , \\
\zeta_n&=0 , \quad&&\text{on } \prt \xO_n.
\EAL \right.
$$
By Kato's inequality, $L_\mu |z_n|\leq \sign(z_n) L_\mu z_n$ in the sense of distributions in $\xO_n$. Hence by a comparison argument, we have that $|z_n|\leq \xz_n$ in $\xO_n$. Furthermore it can be checked  that $z_n \to \zeta$ and $\xz_n \to \xz$ in $L^1(\Omega;\ei)$ and locally uniformly in $\xO$.
	
Now note that \eqref{ee1} is valid for any nonnegative solution $\eta\in C_c^2(\xO_n)$. Thus we can use $\zeta_n$ as a test function in \eqref{ee1} to obtain
\begin{equation}
\label{ee2}
\begin{aligned}
\int_{\xO_n}|u|\sign(z_n) L_\xm \xz dx &\leq \int_{\xO_n}\sign(u)f\xz_n dx+ \int_{\xO_n}\xz_n d|\rho| \\
&\qquad +\int_{\xO_n}(v_{|\tau|}^n+w_{|\xn|})\sign(z_n)L_\xm\xz dx.
\end{aligned}
\end{equation}
Also, since $\mathbb{G}_{\mu}[|\tau|]=\mathbb{G}_{\mu}^{\xO_n}[|\tau||_{\xO_n}]+v_{|\tau|}^n$ a.e. in $\xO_n$, we deduce that $v_{|\tau|}^n\rightarrow 0$ in $L^1(\Omega;\ei)$ as $n \to \infty$. Thus sending $n \to \infty$ in \eqref{ee2} we obtain \eqref{poi4} since $\zeta>0$ in $\xO$. Estimate \eqref{poi5} follows by adding \eqref{poi4} and \eqref{NHLPintro}. Thus the proof is complete when $\xz$ is positive.
	
If $\xz$ is nonnegative we set $\xz_\xe=\xz+\xe\ei.$ Then estimates \eqref{poi4} and \eqref{poi5} are valid for $\xz_\xe$ for any $\xe>0.$  The desired result follows by letting $\varepsilon \to 0$.
\end{proof}

\section{The Nonlinear Problem}
\subsection{Weak $L^p$  estimates}

We denote by
$L^p_w(\Omega;\tau)$, $1 \leq p < \infty$, $\tau \in \GTM^+(\Omega)$, the
weak $L^p$ space (or Marcinkiewicz space) defined as follows:
a measurable function $f$ in $\Omega$
belongs to this space if there exists a constant $c$ such that
\[
\lambda_f(a;\tau):=\tau(\{x \in \Omega: |f(x)|>a\}) \leq ca^{-p},
\forevery a>0.
\]
The function $\lambda_f$ is called the distribution function of $f$ (relative to
$\tau$). For $p \geq 1$, denote
$$ L^p_w(\Omega;\tau)=\{ f \text{ Borel measurable}:
\sup_{a>0}a^p\lambda_f(a;\tau)<\infty\}, $$
\bel{semi}
\norm{f}^*_{L^p_w(\Omega;\tau)}=(\sup_{a>0}a^p\lambda_f(a;\tau))^{\frac{1}{p}}. \ee
This is not a norm, but for $p>1$, it is
equivalent to the norm
\[
\norm{f}_{L^p_w(\Omega;\tau)}=\sup\left\{
\frac{\int_{\omega}|f|d\tau}{\tau(\omega)^{1/p'}}:\omega \sbs \Omega, \omega \text{
measurable},\, 0<\tau(\omega)<\infty \right\}.
\]
More precisely,
\bel{equinorm} \norm{f}^*_{L^p_w(\Omega;\tau)} \leq \norm{f}_{L^p_w(\Omega;\tau)}
\leq \myfrac{p}{p-1}\norm{f}^*_{L^p_w(\Omega;\tau)}. \ee
When $d\tau=\ei dx$, for simplicity, we use the notation $L_w^p(\Omega;\ei)$.  Notice that,
$$
L_w^p(\Omega;\ei) \sbs L^{r}(\Omega;\ei), \quad  \forevery r \in [1,p).
$$
From \eqref{semi} and \eqref{equinorm} follows that for any
$u \in L_w^p(\Omega;\ei)$ there holds
\ba\label{300}
\int_{\{|u| \geq s\} }\ei dx \leq s^{-p}\norm{u}^p_{L_w^p(\Omega;\ei)}.
\ea

Let us recall \cite[Lemma 2.4]{BVi} which will be useful in the sequel.
\begin{proposition}
\label{bvivier}
Let $\omega$ be a nonnegative bounded Radon measure on $\partial \Omega$ and $\eta\in C(\xO)$ be a positive weight function. Let $\CH$ be a continuous nonnegative function
on $\xO\times \partial\Omega$. For $\xl > 0$ let
$$
A_\xl(y)=\{x\in\xO :\;\; \CH(x,y)>\xl\} \; , \quad  \quad
m_{\xl}(y)=\int_{A_\xl(y)}\eta(x)dx.
$$
Let $y\in\partial\Omega$ and
suppose that there exist $C>0$ and $\tau>1$ such that
$m_{\xl}(y)\leq C\xl^{-\tau}$ for every $\lambda>0$. Then the function
$$
\BBH[\omega](x):=\int_{D}\CH(x,y)d\omega(y)
$$
belongs to $L^\tau_w(\Omega;\eta )$ and
$$
\|\BBH[\omega]\|_{L^\tau_w(\Omega;\eta)}\leq
(1+\frac{C\tau}{\tau-1})\omega(\partial\Omega).
$$
\end{proposition}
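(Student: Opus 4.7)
The plan is to reduce the statement to a uniform-in-$y$ control of the integral of $\CH(\cdot,y)$ over arbitrary finite-$\eta$-measure sets, and then close the argument by Fubini together with the norm equivalence \eqref{equinorm}. Concretely, by \eqref{equinorm} it suffices to bound, for every measurable $E\subset\xO$ with $0<|E|_\eta:=\int_E\eta\,dx<\infty$, the quantity $|E|_\eta^{-1/\tau'}\int_E\BBH[\omega]\,\eta\,dx$. Fubini yields
$$\int_E\BBH[\omega](x)\,\eta(x)\,dx=\int_{\partial\xO}I_E(y)\,d\omega(y),\qquad I_E(y):=\int_E\CH(x,y)\,\eta(x)\,dx,$$
and I would focus all effort on a uniform bound for $I_E(y)$ in terms of $|E|_\eta$ alone.

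The uniform bound on $I_E(y)$ will come from the layer-cake identity $I_E(y)=\int_0^\infty|\{x\in E:\CH(x,y)>t\}|_\eta\,dt$ and the two competing estimates $|\{x\in E:\CH(x,y)>t\}|_\eta\leq|E|_\eta$ (the trivial one) and $\leq m_t(y)\leq Ct^{-\tau}$ (the hypothesis, evaluated at $\xl=t$). The two bounds cross at $t_\ast=(C/|E|_\eta)^{1/\tau}$, so splitting the integral at $t_\ast$ gives
$$I_E(y)\leq|E|_\eta\,t_\ast+\int_{t_\ast}^\infty Ct^{-\tau}\,dt=C^{1/\tau}|E|_\eta^{1-1/\tau}\Big(1+\frac{1}{\tau-1}\Big)=\frac{\tau}{\tau-1}\,C^{1/\tau}\,|E|_\eta^{1/\tau'},$$
where the equality $|E|_\eta\,t_\ast=C\,t_\ast^{1-\tau}$ has been used. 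Crucially, this bound is uniform in $y\in\partial\xO$.

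Inserting back and integrating against $d\omega(y)$ produces
$$\int_E\BBH[\omega]\,\eta\,dx\leq\frac{\tau}{\tau-1}\,C^{1/\tau}\,|E|_\eta^{1/\tau'}\,\omega(\partial\xO).$$
Dividing by $|E|_\eta^{1/\tau'}$, taking the supremum over $E$, and invoking \eqref{equinorm} yields a bound of the form $c(\tau,C)\,\omega(\partial\xO)$ on $\|\BBH[\omega]\|_{L^\tau_w(\xO;\eta)}$, of exactly the order claimed; the precise constant $(1+C\tau/(\tau-1))$ is then obtained by tracking the two contributions (``diagonal'' and ``tail'') in the split of the layer-cake integral and absorbing them together.

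The proof is essentially algebraic; the only genuine choice is the splitting point $t_\ast$, which is forced by balancing the two available bounds on the layer-cake integrand. The main point to monitor is that the hypothesis $m_\xl(y)\leq C\xl^{-\tau}$ is available uniformly in $y\in\partial\xO$, so that the Fubini step produces only the total mass $\omega(\partial\xO)$ and no implicit $y$-dependence survives. Continuity of $\CH$ and boundedness of $\omega$ guarantee all the iterated integrals are well-defined and the use of Fubini-Tonelli is legitimate, so no delicate approximation step is needed.
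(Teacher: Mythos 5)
Your proof is correct. Note first that the paper does not actually give a proof of this proposition — it is quoted verbatim from Bidaut-V\'eron and Vivier \cite[Lemma 2.4]{BVi} — so there is no in-paper argument to compare against; I evaluate the argument on its own.

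Your strategy — reduce via \eqref{equinorm} to bounding $\int_E \BBH[\omega]\,\eta\,dx$ for arbitrary measurable $E$ of finite $\eta$-measure, apply Fubini to obtain $\int_{\partial\Omega} I_E(y)\,d\omega(y)$ with $I_E(y)=\int_E\CH(\cdot,y)\eta$, and then bound $I_E(y)$ uniformly in $y$ via the layer-cake identity and the two distribution-function bounds $\min(|E|_\eta,\,Ct^{-\tau})$ — is sound, and the computation with the optimal splitting point $t_\ast=(C/|E|_\eta)^{1/\tau}$ is carried out correctly, giving $I_E(y)\le \frac{\tau}{\tau-1}C^{1/\tau}|E|_\eta^{1/\tau'}$ and hence $\|\BBH[\omega]\|_{L^\tau_w(\Omega;\eta)}\le \frac{\tau}{\tau-1}C^{1/\tau}\,\omega(\partial\Omega)$. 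The measurability/Fubini justification is fine given the assumed continuity of $\CH$ and nonnegativity.

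The only soft spot is the closing sentence, where you claim that ``the precise constant $(1+C\tau/(\tau-1))$ is then obtained by tracking the two contributions \dots and absorbing them together.'' This is imprecise: the optimal split does \emph{not} yield the constant $1+\frac{C\tau}{\tau-1}$; it yields the genuinely different (and smaller) constant $\frac{\tau}{\tau-1}C^{1/\tau}$. Fortunately, an elementary check shows $\frac{\tau}{\tau-1}C^{1/\tau}\le 1+\frac{C\tau}{\tau-1}$ for every $C>0,\ \tau>1$ (for $C\ge 1$ this is immediate since $C^{1/\tau}\le C$; for $C<1$ one maximises $b\mapsto \frac{\tau}{\tau-1}b(1-b^{\tau-1})$ on $(0,1)$ and finds the maximum is $\tau^{-1/(\tau-1)}<1$). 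So your argument actually proves a sharper statement and the claimed bound follows a fortiori; you should replace the vague ``tracking and absorbing'' remark by this one-line comparison. If instead one insists on producing the constant literally as written, one can use the non-optimal split at $t_\ast=|E|_\eta^{-1/\tau}$ and estimate $\int_{t_\ast}^\infty Ct^{-\tau}\,dt\le \frac{C\tau}{\tau-1}t_\ast^{1-\tau}$, which gives $I_E(y)\le\big(1+\frac{C\tau}{\tau-1}\big)|E|_\eta^{1/\tau'}$ directly — but there is no mathematical need for this, since your constant dominates.
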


\begin{theorem}
\label{lpweakmartin1}
Let $\mu\leq k^2/4$. We set $p=\min\left(\frac{N+1}{N-1},\frac{N+\xgp+1}{N+\xgp-1}\right)$
and in addition assume that $\lambda_{\mu}>0$.
Then there exists a positive constant $C=C(\Omega,K,\mu)$ such that
\[
\norm{\mathbb{K}_{\mu}[\nu]}_{L_w^{p}(\Omega;\ei)} \leq
C\norm{\nu}_{\mathfrak{M}(\partial\xO)}
\]
for any measure $\nu\in \mathfrak{M}(\partial\xO)$.
\end{theorem}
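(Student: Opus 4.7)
The plan is to invoke Proposition~\ref{bvivier} with $\CH(x,\xi)=K_\mu(x,\xi)$, weight $\eta(x)=\phi_\mu(x)$, $D=\partial\Omega$, and exponent $\tau=p$. If we can establish, uniformly in $\xi\in\partial\Omega$, the bound
\[
m_\xl(\xi)\;=\;\int_{\{x\in\Omega:\,K_\mu(x,\xi)>\xl\}}\phi_\mu(x)\,dx\;\le\;C\xl^{-p},\qquad \xl>0,
\]
then Proposition~\ref{bvivier} applied to $\omega=|\nu|$ delivers $\|\BBK_\mu[\nu]\|_{L^p_w(\Omega;\phi_\mu)}\le C\|\nu\|_{\GTM(\partial\Omega)}$. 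So the whole work reduces to a pointwise/measure-theoretic distribution estimate for a fixed $\xi$.

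For this we would use the Martin kernel asymptotics from Theorem~\ref{poisson} together with $\phi_\mu(x)\asymp d(x)d_K^{\xgp}(x)$ from \eqref{eigenest}. In the non-logarithmic range we then have
\[
K_\mu(x,\xi)\;\asymp\;\frac{d(x)}{|x-\xi|^{N}}\left(\frac{d_K(x)\,d_K(\xi)}{(d_K(x)+|x-\xi|)(d_K(\xi)+|x-\xi|)}\right)^{\!\xgp}\!\!,
\]
which I would bound by the simpler
\[
K_\mu(x,\xi)\;\lesssim\;\frac{d(x)}{|x-\xi|^{N}}\Big(\frac{d_K(x)}{d_K(x)+|x-\xi|}\Big)^{\!\xgp}.
\]
Hence I would split the slice $A_\xl(\xi)$ according to the ordering of $d_K(x)$ and $|x-\xi|$: on the region $d_K(x)\le |x-\xi|$ the factor involving $d_K$ behaves like $\big(d_K(x)/|x-\xi|\big)^{\xgp}$, while on $d_K(x)>|x-\xi|$ it is comparable to a constant. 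In each region I would change to polar-type coordinates around $\xi$ (using the co-area structure induced by the coordinate system $(\xd,\xd_{2,K},x'')$ built in Section~4 and the doubling estimate \eqref{doublingsxes}), reducing the computation to one-dimensional integrals in the variables $r=|x-\xi|$ and $s=d_K(x)$.

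Carrying this out one finds, after integrating $\phi_\mu=d d_K^{\xgp}$ over $\{K_\mu(\cdot,\xi)>\xl\}$, a decay rate governed by the exponent $(N+1)/(N-1)$ when the region is dominated by points with $|x-\xi|\gg d_K(x)$ (so the $d_K$ factor is essentially absorbed and only the Poisson-like factor $d(x)/|x-\xi|^{N}$ counts), and by $(N+\xgp+1)/(N+\xgp-1)$ when $|x-\xi|\lesssim d_K(x)$ (so $d_K$ contributes a positive power to the effective dimension in the polar integral, with the sign of $\xgp$ irrelevant because the point $\xi$ may lie in $K$, forcing $d_K(\xi)=0$). Taking the worse of the two decay rates yields $m_\xl(\xi)\lesssim \xl^{-p}$ with $p=\min\{(N+1)/(N-1),(N+\xgp+1)/(N+\xgp-1)\}$. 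For the remaining critical case $\mu=N^2/4$, $k=N$, the extra logarithmic term in \eqref{Martinest2} is handled by the same splitting: on the regime where it dominates one still gets at least the exponent $(N+\xgp+1)/(N+\xgp-1)$ because logarithms are absorbed into arbitrarily small power losses in the weak $L^p$ bookkeeping.

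The main obstacle is the bookkeeping in the splitting: the kernel is anisotropic in three distance scales $d(x),d_K(x),|x-\xi|$, and one has to make sure that the worst exponent appearing in the distribution function is exactly $p$, uniformly in whether $\xi\in K$ or $\xi\in\partial\Omega\setminus K$. The doubling property Lemma~\ref{doubling} and the coordinate framework of Section~4 are what make this case analysis tractable; once both decay exponents have been produced, the conclusion follows directly from Proposition~\ref{bvivier}.
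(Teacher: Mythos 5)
Your plan (reduce to the distribution estimate $m_\xl(\xi)\le C\xl^{-p}$ and apply Proposition~\ref{bvivier}) is exactly the architecture of the paper's proof; the paper simply presents it by writing $K_\mu\asymp F_1+F_2$ with
\[
F_1(x,\xi)=\frac{d(x)\,d_K^{-\xgp}(x)}{|x-\xi|^{N}},\qquad
F_2(x,\xi)=\frac{d(x)\,d_K^{\xgp}(x)}{|x-\xi|^{N+2\xgp}},
\]
and applying the proposition to each term separately, which is the same dichotomy $d_K(x)\gtrless|x-\xi|$ you describe.

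However, there is a genuine error at the center of your argument. The Martin kernel estimate from Theorem~\ref{poisson} (non-critical case) is
\[
K_\mu(x,\xi)\asymp\frac{d(x)}{|x-\xi|^{N}}
\left(\frac{d_K(x)}{\bigl(d_K(x)+|x-\xi|\bigr)^{2}}\right)^{\!\xgp},
\]
\emph{with a square} in the denominator of the bracket. (Your first displayed formula, containing a factor $d_K(\xi)$, is also off: that factor belongs to the Green function, not to the Martin kernel; for $\xi\in K$ it would vanish identically.) Your ``simpler'' bound
$K_\mu(x,\xi)\lesssim\frac{d(x)}{|x-\xi|^{N}}\bigl(\frac{d_K(x)}{d_K(x)+|x-\xi|}\bigr)^{\xgp}$
is indeed a pointwise upper bound for $\xgp\le0$, but it is \emph{not} harmless: on $d_K(x)\lesssim|x-\xi|$ the true kernel scales as $d(x)d_K^{\xgp}(x)|x-\xi|^{-(N+2\xgp)}$, giving $m_\xl(\xi)\lesssim\xl^{-(N+\xgp+1)/(N+\xgp-1)}$, whereas your version only scales as $d(x)d_K^{\xgp}(x)|x-\xi|^{-(N+\xgp)}$, which yields a strictly slower decay. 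Concretely, take $N=3$, $k=2$, $\mu=1$, so $\xgp=-1$ and $p=\min\{2,3\}=2$; with $\xi\in K$ and local coordinates $d\sim z_1$, $d_K\sim|z'|$, $|x-\xi|\sim|z|$, a direct computation with your bound gives
\[
m_\xl(\xi)\lesssim\int_{0}^{\xl^{-1/2}}\!\!\int_{|w|\le\xl^{-1/2}}\! s\,dw\,ds\sim\xl^{-3/2},
\]
which falls short of the required $\xl^{-2}$. So the squared factor is essential: it is what makes the effective singularity at $\xi\in K$ decay one order faster in $|x-\xi|$ and produces the exponent $(N+\xgp+1)/(N+\xgp-1)$ rather than something weaker. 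Once you replace your simplification with the correct estimate, the rest of the sketch (the co-area/doubling bookkeeping, the two regimes, and the min over the two decay rates) goes through as planned.

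One smaller remark: for the case $\mu=N^2/4$, $k=N$ you invoke an ``absorb logarithms into small power losses'' heuristic. In fact this case does not require that heuristic here, because $\xgp=-N/2$ gives $\frac{N+\xgp+1}{N+\xgp-1}=\frac{N+2}{N-2}>\frac{N+1}{N-1}$, so $p=\frac{N+1}{N-1}$ is determined by the boundary (not the $K$) term and the logarithm plays no role. The paper keeps a genuinely log-corrected estimate for measures concentrated at the origin only in the separate Theorem~\ref{weakestk=N}.
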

\begin{proof} Without loss of generality, we may assume that $\nu$ is nonnegative. Let $\lambda>0$ and $y\in \partial\xO$.
Since $K$ is compact, there exist $\xb_1=\xb_1(K,\xb_0)$ and points $\xi^1,\ldots ,\xi^{\ell}\in K$
such that $K_{\xb_1}
\subset \cup_{j=1}^\ell V(\xi^j,\frac{\xb_0}{4})=:B$.

We first assume that  $0<\xm<\frac{N^2}{4}$  and $y\in V(\xi^i,\frac{\xb_0}{4})$ for some $i=1,...,l$. Since $\xgp<0$,
we have from \eqref{Martinest1} that
\[
K_\xm(x,y)\asymp \frac{d(x)d^{-\xgp}(x)}{|x-y|^N}+\frac{d(x)d_K^\xgp(x)}{|x-y|^{N+2\xgp}},\qquad \mbox{ in }\xO\times\partial\xO.
\]
Set
\[
F_1(x,y)=\frac{d(x)d^{-\xgp}(x)}{|x-y|^N} ,\quad\quad F_2(x,y)=\frac{d(x)d_K^\xgp(x)}{|x-y|^{N+2\xgp}}
\]
and
\begin{align*}
A_\xl(y):=\big\{x\in \xO:\;F_2(x,y)>\xl \big\}, \quad\quad
m_{\xl}(y):=\int_{A_\xl(y)}d(x)d_K^{\xgp}(x) dx
\end{align*}
Note that
\[
|x-y|<\xl^{-\frac{1}{N+\xg_+-1}},\quad\forall x\in A_\xl(y)\cap\{d_K(x)> |x-y|\},
\]
thus
\[
\int_{A_\xl(y)\cap\{d_K(x)> |x-y|\}}d(x)d_K^{\xgp}(x) dx\leq \int_{\{|x-y|<\xl^{-\frac{1}{N+\xg_+ -1}}\}}|x-y|^{1+\xgp}dx=C
\xl^{-\frac{N+\xgp+1}{N+\xg_+ -1}}.
\]
Now,
\ba\label{w1}
d_K(x)\leq \xl^{-\frac{1}{N+\xgp-1}}\quad\text{and}\quad|x-y|<\xl^{-\frac{1}{N+2\xgp}}d_K^{\frac{\xgp+1}{N+2\xgp}}(x),\quad\forall x\in A_\xl(y)\cap\{d_K(x)\leq |x-y|\}.
\ea
Let $\xi\in K$ and $\xl>\big(\frac{4}{\xb_0}\big)^{N+\xgp+1}.$
By \eqref{propdist} we have
 \bal
 \int_{A_\xl(y)\cap\{d_K(x)\leq |x-y|\}\cap V(\xi^i,\frac{\xb_0}{4})}d(x)d_K^{\xgp}(x) dx\leq C_1\int_{A_\xl(y)\cap\{d_K(x)\leq |x-y|\}\cap V(\xi^i,\frac{\xb_0}{4})} (\xd_K^{\xi})^{1+\xgp}(x)dx .
\eal
Set $z_i= x_i-\Gamma_{i,K}^{\xi^i}(x'')$ for $i=1,...,k,$ $z_i=x_i-y_i$ for $i=k+1,...,N$ and $z=(z',z'')$ where
$z'=(z_1,...,z_k)$ and $z''=(z_{k+1},...,z_N).$
By \eqref{w1} and \eqref{propdist} we have
\[
|z'|\leq C_1\xl^{-\frac{1}{N+\xgp-1}}\qquad\text{ and }\qquad|z''|\leq
\xl^{-\frac{1}{N+2\xgp}}|z'|^{\frac{\xgp+1}{N+2\xgp}},
\]
thus
\bal
&\hspace{-1cm}\int_{A_\xl(y)\cap\{d_K(x) \leq |x-y|\}\cap V_K(\xi^i,\frac{\xb_0}{4})}
(\xd_K^{\xi})^{1+\xgp}(x)dx\\
 &\leq \int_{\{|z'|\leq C_1\xl^{-\frac{1}{N+\xgp-1}}\}}\int_{\{|z''|\leq
\xl^{-\frac{1}{N+2\xgp}}|z'|^{\frac{\xgp+1}{N+2\xgp}}}|z'|^{1+\xgp}dz''dz'\\
 &=C \xl^{-\frac{N+\xgp+1}{N+\xgp-1}}.
\eal

Hence,
\bal
& \hspace{-1cm}
\int_{A_\xl(y)\cap\{d_K(x)\leq |x-y|\}}d(x)d_K^{\xgp}(x) dx \\
& =
 \int_{A_\xl(y)\cap\{d_K(x)\leq |x-y|\}\cap B}d(x)d_K^{\xgp}(x) dx
 +\int_{A_\xl(y)\cap\{d_K(x) \leq |x-y|\}\setminus B}d
 (x)d_K^{\xgp}(x) dx\\
 &\leq C \xl^{-\frac{N+\xgp+1}{N+\xgp-1}}+\int_{A_\xl(y)\cap
 \{d_K(x) \leq |x-y|\}\setminus B}d(x)d_K^{\xgp}(x) dx.
\eal
But $d_K(x)>\xb_1$ for any $x\in \xO\setminus K_{\xb_1}$, hence the set
$A_\xl(y)\cap\{d_K(x)\leq |x-y|\}\setminus B$ is empty if $\lambda$ is large enough.

Recalling  also \eqref{eigenest}, we have thus proved that for large enough $\lambda>0$ we have
\[
\int_{A_\xl (y)} \phi_{\mu} \,  dx \leq
c\lambda^{-\frac{N+\xgp+1}{N+\xg_+ -1}}.
\]
Set $\mathbb{F}_i[\xn](x)=\int_{\partial\xO}F_i(x,y)d\xn(y)$,
$i=1,2$. We apply Proposition \ref{bvivier} with $\mathcal{H}(x,y)=F_2(x,y)$, $\eta=\ei$ and  $\omega=\xn$. We obtain
\be\label{F2weak estimate}
\norm{\mathbb{F}_2[\nu]}_{L_w^{\frac{N+\xgp+1}{N+\xgp-1}}(\Omega;\ei)} \leq
C\norm{\nu}_{\mathfrak{M}(\partial\xO)}.
	\ee

Similarly we can prove,

\be\label{F1weak estimate}
	\norm{\mathbb{F}_1[\nu]}_{L_w^{\frac{N+1}{N-1}}(\Omega;\ei)} \leq
	C\norm{\nu}_{\mathfrak{M}(\partial\xO)}.
	\ee
The desired result follows by \eqref{F2weak estimate}, \eqref{F1weak estimate}.

The proof of the other cases is similar and is omitted.
\end{proof}

\begin{theorem}\label{weakestk=N}
Let $K=\{0\} \subset\partial\Omega$,
 $\xm= \frac{N^2}{4}$ and assume that $\lambda_{\mu}>0$.
For $\lambda>e$ set
$$
A_\xl =\{x\in\xO:\;\; K_\xm(x,0)>\xl\}\quad.
$$
Then there exists a positive constant $C=C(\xO)$ such that
$$
m_{\xl}:=\int_{A_\xl}\ei dx\leq C \left(\xl^{-1}  |
\ln\xl|\right)^{\frac{N+2}{N-2}}.
$$
\end{theorem}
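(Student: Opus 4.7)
The plan is to reduce the measure-of-sublevel-set estimate to an elementary integration over a small ball around the origin, using the explicit form of the Martin kernel provided by Theorem \ref{poisson}(ii) and the eigenfunction asymptotics \eqref{eigenest}.

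First I would specialize the estimate of Theorem \ref{poisson}(ii) to $\xi=0$. Since $K=\{0\}$, we have $d_K(x)=|x|$ and $|x-\xi|=|x|$, so $\frac{|x|}{(|x|+|x-\xi|)^2}=\frac{1}{4|x|}$ and the first term of the Martin kernel becomes $4^{N/2}\frac{d(x)}{|x|^{N/2}}$. Combining with the logarithmic term gives
\[
K_\mu(x,0) \asymp \frac{d(x)}{|x|^{N/2}}\bigl(1+|\ln|x||\bigr), \qquad x\in \Omega .
\]
Since $0 \in K \subset \partial\Omega$, we have $d(x)\le |x|$, so $A_\lambda$ is contained in the set $\{x\in\Omega: |x|^{1-N/2}(1+|\ln|x||)>c\lambda\}$ for some $c=c(\Omega)>0$.

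Second, I would determine an explicit radius $R_\lambda$ such that $A_\lambda \subset B(0,R_\lambda)$. The function $g(r):=r^{1-N/2}(1+|\ln r|)$ is strictly decreasing for small $r$; testing $R=C(\lambda^{-1}\ln\lambda)^{2/(N-2)}$ yields $R^{-(N-2)/2}=C^{-(N-2)/2}\lambda(\ln\lambda)^{-1}$ and $|\ln R|\sim\frac{2}{N-2}\ln\lambda$, whence $g(R)\approx \frac{2}{(N-2)C^{(N-2)/2}}\lambda$. Choosing $C$ sufficiently large (depending on $c$ and $N$) forces $g(R)\le c\lambda$, hence $R_\lambda\le C(\lambda^{-1}\ln\lambda)^{2/(N-2)}$ for $\lambda$ sufficiently large.

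Third, I would use the eigenfunction estimate $\phi_\mu(x)\asymp d(x)|x|^{-N/2}$ (from \eqref{eigenest} with $\gamma_+=-N/2$) together with $d(x)\le |x|$ and polar coordinates to integrate:
\[
m_\lambda \le C \int_{B(0,R_\lambda)\cap\Omega} \frac{d(x)}{|x|^{N/2}}\, dx \le C' \int_0^{R_\lambda} r^{N/2}\, dr = C'' R_\lambda^{(N+2)/2},
\]
and substituting $R_\lambda$ gives the asserted bound $C(\lambda^{-1}|\ln\lambda|)^{(N+2)/(N-2)}$ for $\lambda$ large enough. For $\lambda$ in a bounded interval $[e,\lambda_0]$ the estimate is trivial, since $m_\lambda\le \|\phi_\mu\|_{L^1(\Omega)}<\infty$ while $(\lambda^{-1}|\ln\lambda|)^{(N+2)/(N-2)}$ is bounded below by a positive constant there; enlarging the constant $C$ handles this range.

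There is no real obstacle here: the argument is essentially an elementary radius-of-sublevel-set calculation made possible by the sharp asymptotics of Theorem \ref{poisson}(ii). The only point requiring care is the asymptotic inversion of $g$ that produces the extra $|\ln\lambda|$ factor distinguishing this critical regime from the weak-$L^p$ estimate of Theorem \ref{lpweakmartin1}; this is precisely what prevents one from using Proposition \ref{bvivier} directly and necessitates the direct sublevel-set bound stated here.
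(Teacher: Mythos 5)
Your proposal is correct and follows essentially the same route as the paper: specialize \eqref{Martinest2} at $\xi=0$ to get $K_\mu(x,0)\asymp d(x)|x|^{-N/2}(1+|\ln|x||)$, use $d(x)\le |x|$ to trap $A_\lambda$ in a ball of radius $R_\lambda \lesssim (\lambda^{-1}\ln\lambda)^{2/(N-2)}$ by inverting the radial function, and then integrate $\phi_\mu\lesssim |x|^{-(N-2)/2}$ over that ball in polar coordinates. The only cosmetic difference is that the paper carries out the inversion via the elementary implication $-r^{-1}\ln r>\lambda \Rightarrow r\le\lambda^{-1}\ln\lambda$ applied after the substitution $r=|x|^{(N-2)/2}$, while you argue directly with $g(r)=r^{1-N/2}(1+|\ln r|)$; you are also slightly more careful than the paper in keeping the $1+$ in the logarithmic factor and in noting that the bounded range $\lambda\in[e,\lambda_0]$ is handled by enlarging the constant.
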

\begin{proof}
By \eqref{eigenest} we have $\xf_\xm(x) \asymp
 d(x)|x|^{-\frac{N}{2}}$
in $\Omega$ while by \eqref{Martinest2} we have
\[
K_{\mu}(x,0) \asymp \frac{d(x)}{|x|^{\frac{N}{2}}}
\big|\ln |x| \, \big| \, ,\quad\;  \text{in}\;\; \Omega.
\]
Now we note that if $r\in(0,\frac{1}{2})$ and $\xl>e,$ then
\begin{align}
-r^{-1}\ln r>\xl\Rightarrow r\leq \xl^{-1}\ln\xl.
\label{34}
\end{align}
Indeed, if $-r^{-1}\ln r>\xl$ and $r> \xl^{-1}\ln\xl,$ then
$-(\xl^{-1}\ln\xl )^{-1}\ln r>\xl$
and hence $r<\xl^{-1}$,
which is clearly a contradiction. Thus, by \eqref{34} and the fact that $d(x)\leq |x|$ in $\xO$, we can easily prove that
\begin{align}\label{35}
\{x\in\xO: \; K_\xm(x,0)>\xl\}\subset
\{x\in\xO: \; |x|\leq C\left(\xl^{-1}|\ln\xl|\right)^{\frac{2}{N-2}}\},
\end{align}
for some positive constant $C=C(\xO).$
From \eqref{35} follows that for $\lambda>e$ there holds
$$
\int_{A_\xl}\ei dx\leq C_1 \int_{ \{|x|\leq C\left(\xl^{-1}|\ln\xl|\right)^{\frac{2}{N-2}}\}}|x|^{-\frac{N-2}{2}} dx
\leq C_2 \left(\xl^{-1}|\ln\xl|\right)^{\frac{N+2}{N-2}},
$$
as required.
\end{proof}

\begin{theorem}
Let $\mu\leq k^2/4$ and assume that $\lambda_{\mu}>0$.
We set $p_{\partial\xO}=\frac{N+1}{N-1}$ and $p_K=\frac{N+\xgp+1}{N+\xgp-1}$. We then have

\noindent
$\ia$ Let $\xn\in \mathfrak{M}(\partial\xO)$ with compact support $F$, where $F\subset\partial\xO\setminus K.$ Then there exists a positive constant
$C=C(\Omega,K,\mu,\text{\rm dist}(F,K))$ such that
\[
\norm{\mathbb{K}_{\mu}[\nu]}_{L_w^{p_{\partial\xO}}(\Omega;\ei)} \leq
C\norm{\nu}_{\mathfrak{M}(\partial\xO)}.
\]
$\ib$ Assume in addition that  $\xm< \frac{N^2}{4}$.
There exists a positive constant $C=C(\Omega,K,\mu)$ such that
for any $\xn\in \mathfrak{M}(\partial\xO)$ with compact support in $K$
there holds
\[
\norm{\mathbb{K}_{\mu}[\nu]}_{L_w^{p_K}(\Omega;\ei)} \leq
C\norm{\nu}_{\mathfrak{M}(\partial\xO)}.
\]
$\ic$ Let $\xm= \frac{N^2}{4}$. For any $0<\xg<2$ there exists a
positive constant $C=C(\Omega,\mu,\xg)$ such that
for any $\xn\in \mathfrak{M}(\partial\xO)$ which is
concentrated at $0\in\partial\xO$ there holds
\[
\norm{\mathbb{K}_{\mu}[\nu]}_{L_w^{\frac{N+2}{N-\xg}}(\Omega;\ei)} \leq
C\norm{\nu}_{\mathfrak{M}(\partial\xO)}.
\]
\end{theorem}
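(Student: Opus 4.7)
All three parts are applications of Proposition \ref{bvivier} with $\CH(x,y)=K_\mu(x,y)$, $\eta=\phi_\mu$, and $\omega=\nu$; it suffices to produce, for each $y$ in the support of $\nu$, a bound of the form $m_\lambda(y)\leq C\lambda^{-\tau}$ with $\tau$ the exponent to appear in the conclusion. As in Theorem \ref{lpweakmartin1} and Theorem \ref{weakestk=N}, we may assume $\nu\geq 0$, and the work reduces to the distribution-function estimate
\[
m_\lambda(y):=\int_{A_\lambda(y)}\phi_\mu(x)\,dx,\qquad
A_\lambda(y)=\{x\in\Omega:K_\mu(x,y)>\lambda\},
\]
uniformly in $y\in\operatorname{supp}\nu$.

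\emph{Part (i).} Because $\operatorname{dist}(F,K)=:\delta_0>0$, for every $y\in F$ we have $d_K(z)\geq \delta_0/2$ whenever $|z-y|\leq\delta_0/4$. Combining this with \eqref{Martinest1} and \eqref{eigenest} gives, in such a neighborhood of $y$,
\[
K_\mu(x,y)\asymp \frac{d(x)}{|x-y|^N},\qquad \phi_\mu(x)\asymp d(x).
\]
Outside that neighborhood, $K_\mu(\cdot,y)$ and $\phi_\mu$ are uniformly bounded (with constants depending only on $\delta_0$), so points there contribute to $A_\lambda(y)$ only for bounded $\lambda$. Inside the neighborhood, $K_\mu(x,y)>\lambda$ forces $d(x)\geq c\lambda|x-y|^N$; using also $d(x)\leq|x-y|$ (since $d(y)=0$) one obtains $|x-y|\leq c\lambda^{-1/(N-1)}$, and then the routine computation of the Poisson kernel on a half-space yields $m_\lambda(y)\leq C\lambda^{-(N+1)/(N-1)}$.

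\emph{Part (ii).} For $y\in K$ we have $|x-y|\geq d_K(x)$, hence $d_K(x)+|x-y|\asymp|x-y|$. Plugging this into \eqref{Martinest1} collapses the Martin estimate into a single term
\[
K_\mu(x,y)\asymp \frac{d(x)\,d_K(x)^{\gamma_+}}{|x-y|^{N+2\gamma_+}},
\]
which is exactly the quantity $F_2(x,y)$ estimated in the proof of Theorem \ref{lpweakmartin1}. Repeating the layer-cake computation used there (splitting $A_\lambda(y)$ according to whether $d_K(x)\leq|x-y|$ or not, employing the flattening of $K$ via the coordinates $z'=x'-\Gamma_K^{\xi^i}(x'')$ of Section 4 together with \eqref{propdist} and \eqref{eigenest}) yields $m_\lambda(y)\leq C\lambda^{-p_K}$, uniformly in $y\in K$. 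Proposition \ref{bvivier} then provides the stated $L_w^{p_K}(\Omega;\phi_\mu)$ bound.

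\emph{Part (iii).} For $K=\{0\}$ and $\mu=N^2/4$, any $\nu$ concentrated at $0$ gives $\mathbb{K}_\mu[\nu]=\alpha K_\mu(\cdot,0)$ with $\alpha=\nu(\{0\})$. A direct evaluation of \eqref{Martinest2} at $\xi=0$ shows
\[
K_\mu(x,0)\asymp \frac{d(x)}{|x|^{N/2}}\bigl|\ln|x|\bigr|\quad\text{for $x$ near }0,
\]
and Theorem \ref{weakestk=N} already delivers $m_\lambda\leq C(\lambda^{-1}|\ln\lambda|)^{(N+2)/(N-2)}$. For any fixed $\gamma\in(0,2)$ the exponent $(N+2)/(N-\gamma)$ is strictly smaller than $(N+2)/(N-2)$, so the logarithmic factor is absorbed for $\lambda$ large:
\[
\lambda^{-(N+2)/(N-2)}|\ln\lambda|^{(N+2)/(N-2)}\leq C_\gamma\,\lambda^{-(N+2)/(N-\gamma)}.
\]
One more application of Proposition \ref{bvivier} with $\tau=(N+2)/(N-\gamma)$ closes the proof.

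\textbf{Main obstacle.} The only delicate step is the distribution-function computation in (ii): one must carry out the change of variables flattening $K$ together with a careful split of $A_\lambda(y)$ according to whether $d_K(x)\lessgtr|x-y|$ to recover the sharp exponent $p_K$, exactly as was done for the term $F_2$ in Theorem \ref{lpweakmartin1}. Parts (i) and (iii) are essentially readings of calculations already present in the paper.
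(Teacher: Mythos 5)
Your proposal is correct and follows essentially the same route the paper intends: the paper simply remarks that the proof is analogous to Theorem \ref{lpweakmartin1} (with Theorem \ref{weakestk=N} supplying the distribution-function bound for part (iii)), and your argument carries this out faithfully — localizing away from $K$ so that $K_\mu\asymp d(x)/|x-y|^N$ in (i), noting that $y\in K$ forces $d_K(x)+|x-y|\asymp|x-y|$ so the Martin estimate reduces to the single term $F_2$ already analyzed in Theorem \ref{lpweakmartin1} in (ii), and absorbing the logarithmic factor of Theorem \ref{weakestk=N} by the strict inequality $(N+2)/(N-\gamma)<(N+2)/(N-2)$ in (iii), then invoking Proposition \ref{bvivier} in each case.
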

\begin{proof}
The proof is similar to the proof of Theorem \ref{lpweakmartin1}, and we omit it. We note that for (iii) we use Theorem \ref{weakestk=N}.
\end{proof}

\subsection{Existence and Uniqueness}

Let $g: \mathbb{R}\rightarrow\mathbb{R}$ be a nondecreasing continuous function such that $g(0)=0.$

\begin{definition} \label{def:weak-sol}
 Let $\xn \in \mathfrak{M}(\partial\xO)$. A function $u$ is a weak solution of
\[
\left\{ \BAL
 L_\mu u+g(u)&=0\qquad \text{in }\;\Omega,\\
\tr(u)&=\nu,
\EAL \right.
\]
if $u\in L^1(\Omega;\ei)$, $g(u) \in L^1(\Omega;\ei)$ and
\[
\int_{\xO}u L_{\xm }\zeta \, dx+ \int_{\xO}g(u)\zeta \, dx =
\int_{\Omega} \mathbb{K}_{\xm}[\xn]L_{\xm }\zeta \, dx \, ,
\qquad\forall \zeta \in\mathbf{X}_\xm(\xO ,K).
\]
\end{definition}

\begin{lemma}[ {\cite[Lemma 5.1]{GkT1}} ]
\label{subcrcon} Assume $g$ satisfies
\[
\int_1^\infty  s^{-p-1}(\ln s)^{q} \big(g(s)-g(-s)\big)ds<\infty
\]
for some $p>1$ and $q \geq 0$. Let $v$ be a measurable function defined in
$\Omega$. For $s>0$ set
$$
E_s(v)=\{x\in \xO  : | v(x)|>s\} \quad \text{and} \quad
e(s)=\int_{E_s(v)} \ei dx.
$$
Assume that there exists a positive constant $C$ such that
\[
e(s) \leq Cs^{-p}(1+ \ln s)^q, \quad \forall s>1.
\]
Then $g(|v|), g(-|v|), g(v) \in L^1(\Omega;\ei)$.
\end{lemma}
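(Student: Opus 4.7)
The proof plan is via the layer-cake representation of the integral of $g(|v|)$ against $\phi_\mu\,dx$, followed by integration by parts against the Stieltjes measure $dg$. Since $g$ is nondecreasing with $g(0)=0$, I can write $g(|v(x)|)=\int_0^{|v(x)|}\,dg(s)$. Multiplying by $\phi_\mu$, integrating over $\Omega$, and applying Fubini gives
\[
\int_\Omega g(|v|)\,\phi_\mu\,dx=\int_0^\infty e(s)\,dg(s)=\int_0^1 e(s)\,dg(s)+\int_1^\infty e(s)\,dg(s).
\]
The first piece is bounded by $g(1)\|\phi_\mu\|_{L^1(\Omega)}<\infty$. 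For the second piece I would invoke the hypothesis $e(s)\leq Cs^{-p}(1+\ln s)^q$ so that it suffices to control $\int_1^\infty s^{-p}(1+\ln s)^q\,dg(s)$.

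Next I would apply Stieltjes integration by parts on $[1,R]$:
\[
\int_1^R s^{-p}(1+\ln s)^q\,dg(s)=R^{-p}(1+\ln R)^q g(R)-g(1)+\int_1^R g(s)\bigl[ps^{-p-1}(1+\ln s)^q-qs^{-p-1}(1+\ln s)^{q-1}\bigr]ds.
\]
The integral on the right-hand side is dominated, up to a constant, by $\int_1^\infty g(s)\,s^{-p-1}(1+\ln s)^q\,ds$, which is finite by the subcritical assumption on $g$ (since $g\geq 0$ on $[0,\infty)$ and $(1+\ln s)^q\asymp(\ln s)^q$ for large $s$). To handle the boundary term I would use the monotonicity of $g$: for $s\in[R,2R]$ one has $g(s)\geq g(R)$, so
\[
g(R)\int_R^{2R}s^{-p-1}(\ln s)^q\,ds\leq \int_R^{2R} g(s)\,s^{-p-1}(\ln s)^q\,ds\xrightarrow{R\to\infty}0,
\]
and the left-hand side is bounded below by a constant times $g(R)R^{-p}(\ln R)^q$. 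This forces $R^{-p}(1+\ln R)^q g(R)\to 0$ and gives $g(|v|)\in L^1(\Omega;\phi_\mu)$.

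The argument for $g(-|v|)$ is symmetric: the function $\tilde g(s):=-g(-s)$ is nondecreasing with $\tilde g(0)=0$, and the subcritical hypothesis reads exactly $\int_1^\infty s^{-p-1}(\ln s)^q\tilde g(s)\,ds<\infty$, so the same layer-cake and integration-by-parts scheme yields $-g(-|v|)\in L^1(\Omega;\phi_\mu)$, hence $g(-|v|)\in L^1(\Omega;\phi_\mu)$. Finally, since $g$ is nondecreasing with $g(0)=0$, for every $x$ one has $g(-|v(x)|)\leq g(v(x))\leq g(|v(x)|)$, whence $|g(v)|\leq |g(|v|)|+|g(-|v|)|$ and $g(v)\in L^1(\Omega;\phi_\mu)$.

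The main technical obstacle is the integration-by-parts step for a merely continuous (possibly non-differentiable) $g$, which I would carry out rigorously in the Lebesgue--Stieltjes sense, together with the justification that the boundary term $R^{-p}(1+\ln R)^q g(R)$ vanishes in the limit; the latter is not given for free and requires the monotonicity trick above to convert the assumed integrability into pointwise decay. Once these two points are settled the rest is routine manipulation of Fubini and layer-cake estimates.
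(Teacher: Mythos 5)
Your proof is correct, and it follows what is essentially the standard argument for such subcriticality lemmas (layer-cake representation, Fubini--Tonelli, Lebesgue--Stieltjes integration by parts). Note that the paper itself does not prove this lemma but cites it from \cite{GkT1}, so there is no in-text proof to compare against; your argument matches the approach used for analogous statements in the Marcus--V\'eron framework (see e.g. the proof of the $q=0$ version in \cite{MVbook}). The two points you flag as requiring care are indeed the ones that need care, and you handle them correctly: the Stieltjes integration by parts is legitimate because $g$ is continuous and nondecreasing so $dg$ is a nonnegative Borel measure and $F(s)=s^{-p}(1+\ln s)^q$ is $C^1$; and the vanishing of the boundary term $F(R)g(R)$ is correctly extracted from the convergence of $\int_1^\infty g(s)s^{-p-1}(\ln s)^q\,ds$ via the doubling inequality $g(R)\int_R^{2R}s^{-p-1}(\ln s)^q\,ds \le \int_R^{2R}g(s)s^{-p-1}(\ln s)^q\,ds \to 0$ together with the lower bound $\int_R^{2R}s^{-p-1}(\ln s)^q\,ds \ge 2^{-p-1}R^{-p}(\ln R)^q$. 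Strictly speaking you only need a sequence $R_n\to\infty$ along which the boundary term vanishes, since $\int_1^R F\,dg$ is monotone in $R$, but your stronger conclusion that the full limit is zero is also valid, since $F(R)g(R)=\int_1^R F\,dg + g(1)-I(R)$ shows $F(R)g(R)$ converges, and a convergent nonnegative sequence with $\liminf$ zero tends to zero. The reduction of $g(-|v|)$ to $\tilde g(s)=-g(-s)$ and the sandwich $g(-|v|)\le g(v)\le g(|v|)$ for the final conclusion are both correct.
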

\begin{theorem}\label{nonlinearexistence}
Let $\mu\leq k^2/4$, $\lambda_{\mu}>0$,
$\nu \in \GTM(\partial \Omega)$ and
$g: \mathbb{R}\rightarrow\mathbb{R}$ be a nondecreasing continuous function such that $g(0)=0$.  Assume that $g(\pm\BBK_\xm[\xn_\pm])\in L^1(\xO;\ei).$ Then  there exists a unique weak solution $u$ of \eqref{NLinintro}. Furthermore, there holds
\bal
u+\mathbb{G}_{\mu}[g(u)]=\mathbb{K}_{\xm}[\xn],\quad a.e.
\;\text{in}\;\xO .
\eal
\end{theorem}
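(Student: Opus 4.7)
\medskip

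\noindent\textbf{Proof plan.}
My strategy follows the classical measure-data scheme of Marcus--V\'eron adapted to the $L_\mu$ setting developed in the paper: establish uniqueness via a Kato-type inequality, then construct a solution by approximation, using the integrability hypothesis to dominate the nonlinear term.

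\medskip

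\emph{Uniqueness.} Suppose $u_1,u_2$ are two weak solutions in the sense of Definition~\ref{def:weak-sol}. Their difference satisfies $L_\mu(u_1-u_2)=g(u_2)-g(u_1)$ in the sense of distributions with $\tr(u_1-u_2)=0$, so $u_1-u_2=\mathbb{G}_\mu[g(u_2)-g(u_1)]$ by Lemma~\ref{existence3}. Applying inequality \eqref{poi4} of Theorem~\ref{thm:val} with $f=g(u_2)-g(u_1)$, $\rho=0$, $\nu=0$ yields
\[
\int_\Omega |u_1-u_2|\, L_\mu\zeta\, dx \leq \int_\Omega \sign(u_1-u_2)\bigl(g(u_2)-g(u_1)\bigr)\zeta\, dx\leq 0
\]
for every nonnegative $\zeta\in\mathbf{X}_\mu(\Omega,K)$, where the last inequality uses monotonicity of $g$. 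Choosing $\zeta$ to be the solution of $L_\mu\zeta=\phi_\mu$ furnished by Lemma~\ref{existence3} (which is nonnegative by Lemma~\ref{comparison}), we obtain $\int_\Omega|u_1-u_2|\phi_\mu\, dx\leq 0$, hence $u_1=u_2$.

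\medskip

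\emph{Existence via approximation.} For $n\in\mathbb{N}$ set $g_n(t):=\max(-n,\min(g(t),n))$, which is bounded, continuous, nondecreasing and vanishes at $0$. Consider the map $T_n:L^1(\Omega;\phi_\mu)\to L^1(\Omega;\phi_\mu)$ defined by $T_n(v)=\mathbb{K}_\mu[\nu]-\mathbb{G}_\mu[g_n(v)]$. Boundedness of $g_n$, estimate \eqref{esti1}, and the $L^1$-estimate \eqref{210} guarantee that $T_n$ is continuous; compactness follows because $\mathbb{G}_\mu$ maps bounded subsets of $L^\infty(\Omega)$ into locally equicontinuous families by interior elliptic regularity for $L_\mu$. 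Schauder's theorem then produces a fixed point $u_n$, which is a weak solution of $L_\mu u_n+g_n(u_n)=0$ with $\tr(u_n)=\nu$ and moreover satisfies the pointwise representation $u_n+\mathbb{G}_\mu[g_n(u_n)]=\mathbb{K}_\mu[\nu]$ a.e. in $\Omega$.

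\medskip

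\emph{A priori bound and passage to the limit.} Applying Kato-type inequality \eqref{poi5} to $u_n-\mathbb{K}_\mu[\nu_+]$ together with the sign property $g_n(t)\geq 0$ for $t\geq 0$ and the fact that $\mathbb{K}_\mu[\nu_+]$ is $L_\mu$-harmonic with trace $\nu_+$, one shows that $u_n\leq\mathbb{K}_\mu[\nu_+]$ a.e. in $\Omega$; symmetrically $u_n\geq-\mathbb{K}_\mu[\nu_-]$ a.e.  Since $g$ is nondecreasing, this gives the two-sided domination
\[
g(-\mathbb{K}_\mu[\nu_-])\leq g_n(u_n)\leq g(\mathbb{K}_\mu[\nu_+]),
\]
where both outer functions lie in $L^1(\Omega;\phi_\mu)$ by hypothesis. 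Using the representation formula and the $L^1(\Omega;\phi_\mu)$ estimate \eqref{esti1} applied to $u_n-u_m$, combined with the monotonicity of $g$ along the approximation, yields convergence of $u_n$ to some $u$ in $L^1(\Omega;\phi_\mu)$ and, up to a subsequence, a.e.\ in $\Omega$; the dominated convergence theorem then gives $g_n(u_n)\to g(u)$ in $L^1(\Omega;\phi_\mu)$. Passing to the limit in the weak formulation and in $u_n+\mathbb{G}_\mu[g_n(u_n)]=\mathbb{K}_\mu[\nu]$ finishes the existence proof and yields the announced representation formula.

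\medskip

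\emph{Main obstacle.} The delicate point is the a priori bound $-\mathbb{K}_\mu[\nu_-]\leq u_n\leq \mathbb{K}_\mu[\nu_+]$, because the natural comparison must be carried out inside the class of very weak solutions with measure boundary data (no pointwise boundary values are available). The argument must go through the Kato inequalities \eqref{poi4}--\eqref{poi5} of Theorem~\ref{thm:val}, exploiting that $\mathbb{K}_\mu[\nu_\pm]$ is $L_\mu$-harmonic with trace $\nu_\pm$ and that $\tr$ is linear, so that $(u_n-\mathbb{K}_\mu[\nu_+])_+$ has zero trace and satisfies a favourable distributional inequality from the sign of $g_n(u_n)$ on the set $\{u_n>\mathbb{K}_\mu[\nu_+]\}$. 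Once this is in place, the integrability hypothesis on $g(\pm\mathbb{K}_\mu[\nu_\pm])$ plays exactly the role of a dominating function, and the remaining steps are routine.
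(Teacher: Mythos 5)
Your proposal follows the same overall skeleton as the paper --- truncate $g$ to $g_n$, establish the two-sided domination $-\mathbb{K}_{\mu}[\nu_-]\leq u_n\leq\mathbb{K}_{\mu}[\nu_+]$ via the Kato inequality \eqref{poi5} with $\zeta=\phi_\mu$, use the integrability hypothesis as a dominating function, and pass to the limit --- but it constructs the approximate solutions by a genuinely different mechanism. For bounded $g$ the paper argues variationally: it minimizes a coercive convex functional on compactly contained smooth subdomains $D\subset\subset\Omega$ (coercivity of the quadratic form on $H^1_0(D)$ follows from the local equivalence $\phi_\mu\asymp1$ and \eqref{coer}), then runs the resulting solutions $u_D$ along a smooth exhaustion $\{\Omega_n\}$, with the uniform bound $|u_n|\leq\mathbb{G}_{\mu}[M]+|\mathbb{K}_{\mu}[\nu]|$ supplying local compactness. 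You instead apply Schauder's fixed point theorem to $T_n(v)=\mathbb{K}_{\mu}[\nu]-\mathbb{G}_{\mu}[g_n(v)]$ on $L^1(\Omega;\phi_\mu)$. Both routes are viable; the paper's avoids having to justify the compactness of $v\mapsto\mathbb{G}_{\mu}[g_n(v)]$ near the boundary (your sketch mentions only interior equicontinuity --- one also needs uniform integrability of $\mathbb{G}_{\mu}[g_n(v)]\phi_\mu$ at the boundary, e.g.\ via the fixed majorant $\mathbb{G}_{\mu}[n]$), while yours bypasses the exhaustion and variational setup entirely.

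There is, however, a gap in your passage to the limit $n\to\infty$ in the unbounded case. You assert that applying \eqref{esti1} to $u_n-u_m$ ``combined with the monotonicity of $g$ along the approximation'' gives convergence of $u_n$ in $L^1(\Omega;\phi_\mu)$. Since $u_n-u_m=\mathbb{G}_{\mu}[g_m(u_m)-g_n(u_n)]$, estimate \eqref{esti1} bounds $\|u_n-u_m\|_{L^1(\Omega;\phi_\mu)}$ by $\frac{1}{\lambda_\mu}\|g_m(u_m)-g_n(u_n)\|_{L^1(\Omega;\phi_\mu)}$, which is precisely what you want to conclude --- the argument is circular; and for a signed measure $\nu$ the sequence $u_n$ is not monotone, so no monotone convergence is available. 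The correct step, and the one the paper uses, is to observe that the domination gives $|g_n(u_n)|\leq g(\mathbb{K}_{\mu}[\nu_+])-g(-\mathbb{K}_{\mu}[\nu_-])\in L^1(\Omega;\phi_\mu)$, hence $L_\mu u_n$ is bounded in $\GTM(\Omega;\phi_\mu)$; then \cite[Lemma 1.5.3]{MVbook} shows $\{u_n\}$ is bounded in $W^{1,p}(D)$ for every $D\subset\subset\Omega$ and $1<p<\frac{N+1}{N-1}$, which yields (by a diagonal extraction) a subsequence converging a.e.\ in $\Omega$. Only after that does dominated convergence --- using $|u_n|\leq\mathbb{K}_{\mu}[|\nu|]$ and the fixed $L^1(\Omega;\phi_\mu)$ majorant of $g_n(u_n)$ --- give $u_n\to u$ and $g_n(u_n)\to g(u)$ in $L^1(\Omega;\phi_\mu)$, and finally the uniqueness you proved promotes the subsequential limit to convergence of the whole sequence.
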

\begin{proof}
First we assume that $g$ is bounded and set $M=\sup_{t\in\mathbb{R}}|g(t)|.$
Let $D\subset\subset \xO$ be an open smooth domain.
Then $v=\mathbb{K}_{\xm}[\xn]\in C^2(\overline{D}),$ since it is
$L_\xm$-harmonic in $\xO.$

We note that by \eqref{eigenest}, there exists $C=C(D,K,\xO,\xm)>1$ such that

\bal
C^{-1}\leq \ei(x)\leq C\quad
\text{and}\quad|\nabla\ei(x)|<C,\quad\forall x\in \overline{D}.
\eal
Let $\psi\in C^\infty_c(D).$ Writing $\psi=\ei\tilde\psi,$ we have
\small
\begin{eqnarray*}
\int_D|\nabla\psi|^2dx&=&\int_D|\nabla(\ei\tilde\psi)|^2dx
\leq
2\bigg(\int_D|\nabla \tilde\psi|^2\ei^2dx
+\int_D|\tilde\psi|^2|\nabla\ei|^2dx \bigg)\\
&\leq &C\bigg(\int_D|\nabla
\tilde\psi|^2\ei^2dx+\xl\int_D|\tilde\psi|^2\ei^2dx\bigg)=
C\bigg(\int_{D}|\nabla\psi|^2dx-\xm\int_D\frac{|\psi|^2}{d^2_K}dx\bigg).
\end{eqnarray*}
\normalsize
Thus for any  $w\in H_0^1(D)$ the following inequality holds
\ba
\int_{D}|\nabla w|^2dx-\xm\int_D\frac{|w|^2}{d^2_K}dx\leq \int_{D}|\nabla w|^2dx\leq C\left(\int_{D}|\nabla w|^2dx-\xm\int_D\frac{|w|^2}{d^2_K}dx\right).\label{coer}
\ea
Set $\widetilde{g}(t,x)=g(t+v(x))-g(v(x))$.
Let  $\CJ_\xm$ be the functional defined  by the expression
\[
\BAL
\CJ_\xm[w]=\frac{1}{2}\int_{D}\left(|\nabla w|^2-\frac{\mu}{d^2}w^2+2J[w]\right)dx+\int_{D}g(v)w \, dx,
\EAL
\]
where $J[w](x)=\int_{0}^{w(x)}\widetilde{g}(t,x)dt$ with domain
$$D(\CJ_\xm)=\{w\in H_0^1(D):J(w)\in L^1(D)\}.
$$
By \eqref{coer}, $\CJ_\mu$ is a convex lower semicontinuous and coercive functional. Thus there exists a minimizer $w\in H_0^1(\xO)\cap C^2(D)\cap C(\overline{D})$ such that
\[
\BAL
\left\{
\begin{array}{ll}
L_{\mu}w+g(w+v)=0, & {\text{in } D,}\\
w=0, & \text{on}\;\prt D.
\end{array}
\right.
\EAL
\]
The function $u_D=w+v$ then satisfies
\begin{equation}\label{M8}\BAL
\left\{
\begin{array}{ll}
L_{\mu}u_D+g(u_D)=0, & {\text{in } D,}\\
u_D=v, & \text{on}\;\prt D,
\end{array}
\right.
\EAL
\end{equation}
and $u_D+\BBG^{D}(g(u_D))=v$ in $D$.

Let $\{\xO_n\}$ be a smooth exhaustion of $\xO$ (see \eqref{redu1}) and $u_n$ be the solution of \eqref{M8}, with $D=\xO_n.$ Then,
$$
u_n+\BBG^{\xO_n}(g(u_n))=v,
$$
which implies
\be
|u_n|\leq \BBG^{\xO}(M)+|v|\leq C dd_K^{\min(\xg_+,0)}+|v|,
\qquad \text{ in }
\xO_n.
\label{exi1}
\ee
Thus there exists a subsequence, say $\{u_n\}$, such that $u_n\to u$ locally uniformly in $\xO.$
Also, by \eqref{exi1}, we have that $u_n\to u$ in $L^1(\Omega ;\ei)$ and by the dominated convergence theorem
$$
0=u_n+\BBG^{\xO_n}(g(u_n))-v\to u+\BBG^{\xO}(g(u))-v.
$$
Thus $u+\BBG(g(u))=v,$ i.e. $u$ is a weak solution of \eqref{def:weak-sol}.

Suppose now that $g$ is unbounded. We set
\bal
g_n(t)=\left\{
\begin{array}{ll}
n, &\text{ if }\; t\geq n, \\
g(t), & \text{ if}\; -n<t< n , \\
-n, &\text{ if}\; t\leq -n.
\end{array}
\right.
\eal
We denote by $\tilde u_n$ the weak solution of \eqref{def:weak-sol} with $g_n$ in place of $g$.
Applying \eqref{poi5} with $\xz=\ei$ we obtain
$$
-\int_{\Omega}(\tilde u_n-\BBK[\xn_+])_+\ei \, dx+ \int_{\Omega} \sign_+(\tilde u_n-\BBK[\xn_+])g_n(\tilde u_n)\ei\, dx \leq 0,
$$
which implies $\tilde u_n\leq \BBK[\xn_+]$.
Similarly, by \eqref{poi5} we have that
\be
-\BBK[\xn_-]\leq \tilde u_n\leq \BBK[\xn_+] .
\label{exi2}
\ee
Thus
\be
|g_n(\tilde u_n)|=g_n((\tilde u_n)_+)-g_n(-(\tilde u_n)_-)
\leq g_n(\BBK[\xn_+])-g_n(-\BBK[\xn_-])\leq g(\BBK[\xn_+])-g(-\BBK[\xn_-]).
\label{exi3}
\ee
From \cite[Lemma 1.5.3]{MVbook} and the above inequality
follows that
$\{\tilde u_n\}$ is uniformly bounded in $W^{1,p}(D)$ for any open smooth $D\subset\subset \xO$ and $1<p<\frac{N+1}{N-1}.$ Thus there exists a subsequence such that $\tilde u_n\to \tilde u$ a.e. in $\xO$.

Since $g(\pm\BBK[\xn_\pm])\in L^1(\Omega ;\ei)$, by \eqref{exi2} and \eqref{exi3}, we can easily prove that $\tilde u_n\to\tilde u $ and $g_n(\tilde u_n)\to \tilde g(\tilde u)$ in $L^1(\Omega ;\ei).$ As a consequence $\tilde u$ is a weak solution of \eqref{def:weak-sol}. Uniqueness follows by \eqref{poi4}.
\end{proof}
\medsk
\begin{proof}[\textbf{Proof of Theorem \ref{exist-subGKintro}}]
a) By Theorem \ref{lpweakmartin1} and \eqref{300}, we may apply Lemma \ref{subcrcon} with $p=\min\left(\frac{N+1}{N-1},\frac{N+\xgp+1}{N+\xgp-1}\right)$ and $q=0$ to conclude that $g(\pm\BBK_\xm[\xn_\pm])\in L^1(\xO;\ei).$ This and Theorem \ref{nonlinearexistence} imply the desired result.

Parts (b) and (c) are proved in a similarly.
\end{proof}

\noindent
{\bf\em Proof of Theorem \ref{exist-subGK4}.} By Theorem \ref{weakestk=N} and Lemma \ref{subcrcon} with $p=q=\frac{N+2}{N-2},$ we have that $g(\BBK_\xm[\xa\xd_0])\in L^1(\xO;\ei),$ which, together with Theorem \ref{nonlinearexistence}, implies the desired result.

\appendix\section{Pointwise estimates on eigenfunctions}
\setcounter{equation}{0}
In this appendix, we prove sharp two-sided pointwise estimates for eigenfunctions of \eqref{eigenvalue}. Let $\xb>0$ be small enough and $\xG=\partial\xO$ or $K.$ Let $\eta_{\xb,\xG}\in C^\infty_c(\xG_\xb)$ be such that $0\leq\eta_{\xb, \Gamma} \leq 1$ in $\BBR^N$ and $\eta=1$ in $\overline{\xG}_{\frac{\xb}{2}}.$ We set
\bal
\xz_\xb=(1-\eta_{4\xb,\partial\xO})+\eta_{4\xb,\partial\xO}
d\big( (1-\eta_{\xb,K})+\eta_{\xb,K} \tilde d_K^{\xg_+} \big)
\quad\text{in}\;\xO.
\eal
We consider the minimizing problem
\[
\xl_\xm:=\inf_{u\in C_c^\infty(\xO)\setminus \{0\}}\frac{\int_\xO|\nabla u|^2dx-\xm\int_\xO \frac{u^2}{d_K^2}dx}{\int_\xO u^2 dx}>-\infty.
\]
Setting $u=\xz_\xb v$ we obtain that
\ba\label{eigenvalueap2}
\xl_\xm =\inf_{v\in C_c^\infty(\xO)\setminus \{0\}}\frac{\int_\xO\xz^2_\xb|\nabla v|^2dx-\int_\xO v^2(\xz_\xb\xD\xz_\xb+\xm\frac{\xz^2_\xb}{d_K^2})dx}{\int_\xO \xz_\xb^2 u^2 dx}.
\ea
By \cite[Lemma 3.1]{FF} there exists $\xb_0$ and a positive constant $C=C(\xO,K,\xb_0)$ such that
\ba\label{logfall}
\int_{K_{\xb_0}\cap\xO}|\nabla u|^2 dx-\frac{k^2}{4}\int_{K_{\xb_0}\cap\xO}\frac{u^2}{d_K^2}dx\geq C \int_{K_{\xb_0}\cap\xO}\frac{|u|^2}{d_K^2|\ln d_K|^2}dx,\quad\forall u\in C_c^\infty(K_{\xb_0}\cap\xO).
\ea

In view of the proof of Lemma \ref{subsup}, for $\xe>0$ there exist positive constants $M=(\xO, K,\xe)$  and $\xb_1=\xb_1(\xO,K,\xe)$ such that the function
\bal
\tilde \xf := e^{M d}d\tilde d_{K}^{\xg_+}+d\tilde d_{K}^{\xg_++\xe}
\, \asymp \, d\tilde d_{K}^{\xg_+}
\eal
satisfies $L_\xm\tilde\xf\leq0$ in $K_{\xb_1}\cap\xO $.

Now let $u\in C_c^\infty(K_{\xb_1}\cap\xO)$. Setting $u=\tilde\xf v,$ by \eqref{logfall} we have
\ba
\label{logfall2}
\int_{K_{\xb_1}\cap\xO}d^2\tilde d_{K}^{2\xg_+}|\nabla v|^2 dx\geq C \int_{K_{\xb_1}\cap\xO}\frac{d^2 v^2}{\tilde d_K^{2-2\xg_+}|\ln \tilde d_K|^2}dx,\quad\forall v\in C_c^\infty(K_{\xb_1}\cap\xO)
\ea
Now, by \cite[Theorem 3.2]{FMT}, there exists $\xb_2=\xb_2(\xO)>0$ such that
\[
\int_{\xO_{\xb_2}}|\nabla u|^2 dx\geq
\frac{1}{4}\int_{\xO_{\xb_2}}\frac{u^2}{d^2}dx \, ,
\qquad\forall u \in C_c^\infty(\xO_{\xb_2}).
\]
Setting $u=dv,$  we have that there exists a positive constant $\xb_3=\xb_3(\xO)<\xb_2$ such that
\ba\label{classicalhardy2}
\int_{\xO_{\xb_3}}d^2|\nabla v|^2 dx\geq\frac{1}{8}
\int_{\xO_{\xb_3}}v^2dx \, , \qquad\forall v \in C_c^\infty(\xO_{\xb_3}).
\ea
We denote by  $H^1_0(\xO;d^2\tilde d_K^{2\xg_+})$ the closure of
$C_c^\infty(\xO)$ in the norm
\bal
\norm{u}^2_{H^1(\xO; d^2\tilde d_K^{2\xg_+})}=\int_\xO u^2d^2\tilde d_K^{2\xg_+} dx+\int_\xO|\nabla u|^2d^2\tilde d_K^{2\xg_+} dx
\eal
\begin{proposition}
Let $\xm\leq\frac{k^2}{4}$ and $\xb\leq\frac{1}{16}\min(\xb_3,\xb_1).$ Then there exists a minimizer $v_\xm\in H^1_0(\xO;d^2\tilde d_K^{2\xg_+})$ of \eqref{eigenvalueap2}.
\end{proposition}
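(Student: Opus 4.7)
The plan is a standard direct-method argument in the weighted space $H^1_0(\xO;d^2\tilde d_K^{2\xg_+})$, where the central issue is establishing compactness of a minimizing sequence in the face of the degenerate/singular weight, and in particular handling the critical case $\xm=k^2/4$ where no $H^1_0(\xO)$ minimizer exists. Let $J(v)$ denote the numerator and $N(v)=\int_\xO \zeta_\beta^2 v^2\,dx$ denote the denominator of the Rayleigh quotient in \eqref{eigenvalueap2}, so that $\xl_\xm=\inf J(v)/N(v)$. Since $\zeta_\beta \asymp d\,\tilde d_K^{\xg_+}$ globally in $\Omega$, the weight $\zeta_\beta^2$ is comparable with $d^2\tilde d_K^{2\xg_+}$, and after integration by parts one finds $J(v)=\int_\xO \zeta_\beta^2|\nabla v|^2\,dx + \int_\xO \zeta_\beta (L_\xm\zeta_\beta) v^2\,dx$.

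\medskip

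First I would take a minimizing sequence $\{v_n\}\subset C_c^\infty(\xO)$ with $N(v_n)=1$ and $J(v_n)\to \xl_\xm$. The coercivity step: using Lemma \ref{subsup} applied to the function $\tilde\xf$ (which is comparable to $\zeta_\beta$ on $K_{\xb_0/4}$), one controls $\zeta_\beta L_\xm \zeta_\beta$ by $C\zeta_\beta^2$ on the support of $\eta_{\xb,K}$ and $\eta_{4\xb,\partial\Omega}$, while outside cutoff transitions the function $\zeta_\beta$ is smooth and bounded away from zero, so $\zeta_\beta L_\xm\zeta_\beta$ is merely bounded. Therefore
\[
J(v_n)\geq \int_\xO \zeta_\beta^2 |\nabla v_n|^2 dx - C N(v_n),
\]
which yields a uniform bound $\int_\xO d^2\tilde d_K^{2\xg_+}|\nabla v_n|^2\,dx\leq C$. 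Passing to a subsequence, $v_n\rightharpoonup v_\xm$ weakly in $H^1_0(\xO;d^2\tilde d_K^{2\xg_+})$ and a.e.\ in $\xO$ after a further diagonal extraction on compact subsets of $\xO$, where the weight is uniformly positive and the classical Rellich-Kondrachov theorem applies.

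\medskip

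The main obstacle is showing that weak convergence is in fact \emph{strong} in $L^2(\xO;\zeta_\beta^2)$, so that $N(v_\xm)=1$ and the infimum is attained. Here I would split $\xO$ into three regions: a region $\xO_\delta\setminus K_{\delta}$ near $\partial\xO$ but away from $K$, a region $K_\delta$ near $K$, and the bulk region $\xO\setminus(\xO_\delta\cup K_\delta)$. On the bulk region, strong $L^2$ convergence follows from the standard Rellich compactness since the weight is equivalent to Lebesgue measure there. For the region near $\partial\xO$ away from $K$, the weight reduces to $d^2$ up to constants, and \eqref{classicalhardy2} yields
\[
\int_{\xO_\delta\setminus K_{\delta}}d^2\,v_n^2\,dx
\leq \delta^2 \int_{\xO_{\xb_3}} v_n^2\,dx
\leq C\delta^2 \int_{\xO_{\xb_3}} d^2 |\nabla v_n|^2\,dx \leq C\delta^2,
\]
which is uniformly small. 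The hardest sub-case is the region $K_\delta$ for $\xm=k^2/4$: here I would exploit the improved Hardy inequality \eqref{logfall2}, which after a change of variable $u=\tilde\xf v$ gives
\[
\int_{K_\delta}\frac{d^2\, v_n^2}{\tilde d_K^{\,2-2\xg_+}|\ln \tilde d_K|^2}\,dx \leq C\int_{K_{\xb_1}\cap\xO} d^2\tilde d_K^{2\xg_+}|\nabla v_n|^2\,dx \leq C.
\]
Multiplying by the bound $\tilde d_K^{2}|\ln \tilde d_K|^2\leq C\delta^{2}|\ln\delta|^2$ on $K_\delta$ gives
\[
\int_{K_\delta} d^2\tilde d_K^{2\xg_+}v_n^2\,dx \leq C\delta^2|\ln\delta|^2,
\]
uniformly in $n$, so this contribution also tends to zero with $\delta$. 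For the subcritical case $\xm<k^2/4$ the improved inequality is not needed; the ordinary Hardy inequality with a power weight suffices and the argument is the same.

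\medskip

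Combining the three regions, for every $\eps>0$ one can choose $\delta$ so that the $L^2(\xO;\zeta_\beta^2)$-mass of $v_n-v_m$ outside a compact set of $\xO$ is at most $\eps$, uniformly in $n,m$, while on the compact interior Rellich gives norm convergence along a subsequence. Hence $v_n\to v_\xm$ strongly in $L^2(\xO;\zeta_\beta^2)$, so $N(v_\xm)=1$. Lower semicontinuity of $v\mapsto \int \zeta_\beta^2|\nabla v|^2\,dx$ under weak convergence, together with strong $L^2(\xO;\zeta_\beta^2)$ convergence handling the lower-order term $\int \zeta_\beta L_\xm \zeta_\beta\, v_n^2\,dx$, then yields $J(v_\xm)\leq \liminf J(v_n)=\xl_\xm$, so $v_\xm$ is the desired minimizer.
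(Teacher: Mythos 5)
There is a genuine gap in the coercivity step. You assert that $\zeta_\beta L_\mu\zeta_\beta$ is controlled pointwise by $C\zeta_\beta^2$ on the supports of the cutoffs and conclude $J(v_n)\geq\int_\Omega\zeta_\beta^2|\nabla v_n|^2\,dx-CN(v_n)$. That pointwise bound is false. Near $\partial\Omega$ but away from $K$ one has $\zeta_\beta=d$, hence $\zeta_\beta\Delta\zeta_\beta+\mu\zeta_\beta^2/d_K^2=d\,\Delta d+O(d^2)\asymp d$, while $\zeta_\beta^2\asymp d^2$; near $K$ the paper's estimate \eqref{a7} gives $|\zeta_\beta\Delta\zeta_\beta+\mu\zeta_\beta^2/d_K^2|\leq Cd\,\tilde d_K^{2\gamma_+}$, while $\zeta_\beta^2\asymp d^2\tilde d_K^{2\gamma_+}$. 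In both regions the ratio is of order $1/d$, unbounded as $d\to 0$, so the inequality does not follow by pointwise domination. What actually yields coercivity is a quadratic-form absorption: for every $\varepsilon>0$ there is $M(\varepsilon,\beta)$ such that
\[
\Big|\int_\Omega w^2\Big(\zeta_\beta\Delta\zeta_\beta+\mu\tfrac{\zeta_\beta^2}{d_K^2}\Big)dx\Big|\leq \varepsilon\int_\Omega\zeta_\beta^2|\nabla w|^2\,dx+M\int_\Omega\zeta_\beta^2 w^2\,dx,
\]
and this is obtained from the weighted Hardy estimates \eqref{200} (near $K$, derived by integration by parts together with the logarithmic inequality \eqref{logfall2}) and \eqref{classicalhardy2} (near $\partial\Omega\setminus K$). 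You already invoke exactly these inequalities, but only in the compactness step; in fact they are needed first, to obtain the uniform bound of the minimizing sequence in $H^1_0(\Omega;d^2\tilde d_K^{2\gamma_+})$.

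Once the coercivity is repaired along these lines, the remainder of your argument is the same mechanism the paper uses: local Rellich in the interior, \eqref{classicalhardy2} producing a factor $\delta^2$ near $\partial\Omega\setminus K$, and \eqref{logfall2} with the factor $\tilde d_K^2|\ln\tilde d_K|^2\leq C\delta^2|\ln\delta|^2$ near $K$ in the critical case $\mu=k^2/4$, followed by weak lower semicontinuity of the gradient term and convergence of the lower-order term. One small technical point worth filling in: both \eqref{classicalhardy2} and \eqref{logfall2} are stated for test functions compactly supported in $\Omega_{\beta_3}$ and in $K_{\beta_1}\cap\Omega$ respectively, so to apply them to $v_n\in C_c^\infty(\Omega)$ you need a cutoff; the extra gradient terms it generates live on a transition region where the weight $d^2\tilde d_K^{2\gamma_+}$ is bounded above and below, so they contribute only $O\big(N(v_n)\big)$ and cause no trouble.
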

\begin{proof}
Let $\{w_k\}_k\subset C_c^\infty(\xO)$ be a minimizing sequence of \eqref{eigenvalueap2} normalized by
\bal
\int_\xO\zeta_\xb^2w_k^2 dx=1 , \quad\quad  k\in\BBN.
\eal
First we note that $\zeta_\xb^2 \asymp d^2\tilde d_K^{2\xg_+}$
in $\xO$ and
\ba
\label{a7}
\Big|\xz_\xb\xD\xz_\xb+\xm\frac{\xz^2_\xb}{d_K^2}\Big|
\leq Cd\tilde d_K^{2\xg_+}\, , \qquad\text{in}\; K_{\frac{\xb}{2}},
\ea
where $C$ depends only on $\xO,K$ and $\xb_0$.
For any $v\in  C_c^\infty(K_{\xb_5}\cap\xO)$ we have
\bal
\int_{K_{\xb_5}\cap\xO} d \tilde d_K^{2\xg_+-\frac{1}{2}}
v^2dx=\frac{1}{2}\int_{K_{\xb_5}\cap\xO}
d_K^{2\xg_+-\frac{1}{2}}(\nabla d^2\cdot \nabla d) v^2dx \, ,
\eal
so by integration by parts, H\"{o}lder inequality, Proposition \ref{propdK} (b) and \eqref{logfall2}, we find that for any $\xe>0$ there exits $\xb_5=\xb_5(\xO,K,\xe)$ such that
\ba
\label{200}
\int_{K_{\xb_5}\cap\xO} d \tilde d_K^{2\xg_+-\frac{1}{2}}  v^2dx
\leq \xe\int_{K_{\xb_5}\cap\xO}|\nabla v|^2d^2\tilde d_K^{2\xg_+} dx,
\ea
Now, there holds
\[
\Big|\xz_\xb\xD\xz_\xb+\xm\frac{\xz^2_\xb}{d_K^2} \Big| \leq Cd \, ,
\quad\quad\text{in}\; \xO\setminus K_{\frac{\xb}{2}},
\]
where $C$ depends only on $\xO,K$ and $\xb_0$.

Let $r>0.$ By \eqref{classicalhardy2} and proceeding as in the proof of \eqref{200}, we have that for any $\xe>0$ there exists $\xb_6=\xb_6(\xO,K,\xe,r)$ such that
\[
\int_{\xO_{\xb_6}\setminus K_{r}} d|v|^2dx\leq  \xe\int_{\xO_{\xb_6}\setminus K_{r}}|\nabla v|^2d^2\tilde d_K^{2\xg_+} dx \, ,
\qquad\forall v\in C_c^\infty(\xO_{\xb_6}\setminus K_{r}).
\]
Combining all above, we may deduce that for any $\xe>0$ there exists $M(\xe,\xb)$ such that

\[
\Big|\int_\xO w^2_k(\xz_\xb\xD\xz_\xb+\xm\frac{\xz^2_\xb}{d_K^2})dx
\Big| \leq \xe \int_\xO\xz^2_\xb|\nabla w_k|^2dx+ M.
\]
Hence, the sequence $\{w_k\}$ is uniformly bounded in $H^1_0(\xO;d^2\tilde d_K^{2\xg_+}).$ Thus there exists $v_\xm\in H^1_0(\xO;d^2\tilde d_K^{2\xg_+})$ and a subsequence $w_k,$ denoted by the same index $k,$ such that $w_k\rightharpoonup v_\xm$ in $H^1_0(\xO;d^2\tilde d_K^{2\xg_+})$;
it follows that $w_k\to v_\xm$ in $L^2_{loc}(\xO)$ and a.e. in $\xO.$

By compactness we have that $w_k\to v_\xm$ in $L^2(\xO;\zeta_\xb^2)$.
Moreover, from \eqref{200} and \eqref{classicalhardy2} we have
\bal
\int_\xO w^2_k(\xz_\xb\xD\xz_\xb+\xm\frac{\xz^2_\xb}{d_K^2})dx\to \int_\xO v^2_\xm(\xz_\xb\xD\xz_\xb+\xm\frac{\xz^2_\xb}{d_K^2})dx.
\eal
The desired result now follows by the lower semicontinuity of the gradient term.
\end{proof}
\begin{proposition}
Let $\xm\leq\frac{k^2}{4}$  and $\xb\leq\frac{1}{16}\min(\xb_3,\xb_1)$.
The function $\xf_\xm=v_\xm\xz_\xb$ satisfies
\bal
L_\xm\ei=\xl_\xm\ei , \qquad \mbox{ in } \xO.
\eal
and has the asymptotics
\bal
\ei\asymp d\tilde d_K^{\xg_+} , \qquad\text{in}\;\;\xO.
\eal
\end{proposition}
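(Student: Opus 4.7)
The plan is to first derive the eigenvalue identity by translating the Euler--Lagrange equation for $v_\mu$ into one for $\phi_\mu$, then to prove the two-sided bound by reducing it to $v_\mu \asymp 1$, with the upper bound coming from Moser iteration and the lower bound from a barrier/comparison argument.

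First I would differentiate the variational problem \eqref{eigenvalueap2} at the minimizer $v_\mu$: testing against $\psi \in C_c^\infty(\Omega)$ produces the weak identity
\[
-\div(\zeta_\beta^2 \nabla v_\mu) = \Big(\zeta_\beta\Delta\zeta_\beta + \mu\frac{\zeta_\beta^2}{d_K^2} + \lambda_\mu \zeta_\beta^2\Big) v_\mu \quad \text{in } \Omega.
\]
Using $\zeta_\beta^2\nabla(\phi_\mu/\zeta_\beta) = \zeta_\beta\nabla\phi_\mu - \phi_\mu\nabla\zeta_\beta$ and expanding the divergence, this is exactly $L_\mu \phi_\mu = \lambda_\mu \phi_\mu$ in the sense of distributions. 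Since $L_\mu$ is uniformly elliptic on compact subsets of $\Omega\setminus K$, interior regularity promotes $\phi_\mu$ to $C^2(\Omega\setminus K)$, so the equation holds classically there.

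For the asymptotic $\phi_\mu \asymp d\tilde d_K^{\gamma_+}$, a direct inspection of the three regions (interior of $\Omega$; near $\partial\Omega$ but away from $K$; near both $\partial\Omega$ and $K$) gives $\zeta_\beta \asymp d\tilde d_K^{\gamma_+}$ throughout $\Omega$, so the task reduces to showing $v_\mu \asymp 1$. Since $|v_\mu|$ also minimizes \eqref{eigenvalueap2}, I may assume $v_\mu \ge 0$; strict positivity $v_\mu > 0$ in $\Omega$ follows from the Harnack inequality for nonnegative weak solutions of the weighted equation, which is available via the Moser machinery of Section~4 applied with the weight $\zeta_\beta^2 \asymp d^2\tilde d_K^{2\gamma_+}$. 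For the upper bound $v_\mu \le C$, I would run Moser iteration on $v_\mu^p$ exactly as in the proof of Theorem \ref{th1*}, using the weighted Sobolev inequalities of Corollaries \ref{hardysobolev3}--\ref{hardysobolev4}, the Poincar\'e inequality of Theorem \ref{thm:3.9}, and the density Theorem \ref{density}.

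For the lower bound $v_\mu \ge c > 0$, I would compare $\phi_\mu$ with the subsolution $\eta_{\gamma_+,\varepsilon}$ of Lemma \ref{subsup}\ia. Since we must handle $L_\mu - \lambda_\mu$ rather than $L_\mu$, I would first inspect the proof of Lemma \ref{subsup} to verify that the constant $M$ can be adjusted so the same functions remain sub/supersolutions of $L_\mu - \lambda_\mu$ on $K_{\beta_0}\cap\Omega$. Interior Harnack bounds $\phi_\mu$ away from zero on $\partial K_{\beta_0}\cap\Omega$, so I can choose $c>0$ small with $\phi_\mu \ge c\,\eta_{\gamma_+,\varepsilon}$ there, while on $\partial\Omega$ the inequality holds in the $\tilde W$-limit sense of \eqref{bdrcond3}. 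A comparison argument along the lines of Lemma \ref{comparison} then yields $\phi_\mu \ge c\,\eta_{\gamma_+,\varepsilon} \ge c'\, d\tilde d_K^{\gamma_+}$ throughout $K_{\beta_0}\cap\Omega$; standard Hopf-type barriers handle the remaining region $\overline\Omega\setminus K_{\beta_0/2}$, where $\tilde d_K$ is bounded away from zero.

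The main obstacle is the critical case $\mu = k^2/4$: the barriers $\zeta_{\pm,\varepsilon}$ of Lemma \ref{subsup}\ic{} carry a logarithmic factor $(-\ln \tilde d_K)$, so direct comparison only yields $\phi_\mu$ up to a $\ln$ correction, rather than the clean bound $d\tilde d_K^{-k/2}$ predicted by the statement. To close this gap I would follow \cite{BMS,DD2,FMoT}, exploiting the logarithmic Hardy improvement \eqref{logfall} (the very ingredient that ensured existence of the minimizer) to refine the barrier and remove the $\ln$ factor; alternatively I would approximate by subcritical eigenfunctions $\phi_{\mu_n}$ with $\mu_n \nearrow k^2/4$ and $\gamma_+(\mu_n)\to -k/2$, and pass to the limit, verifying that the comparison constants from the subcritical estimate remain uniform in $n$.
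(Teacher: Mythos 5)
Your overall strategy (derive the Euler--Lagrange equation, then establish $v_\mu\asymp 1$ by a barrier argument) is in the same spirit as the paper's proof, but three details go astray.

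First, the critical-case worry is misplaced. You reach for the logarithmic barriers $\zeta_{\pm,\varepsilon}$ of Lemma \ref{subsup}\ic{} when $\mu=k^2/4$, but those are designed for the Martin kernel, which genuinely carries a $\ln$-correction. For the eigenfunction you should use Lemma \ref{subsup}\ia: the pair $\eta_{\gamma_+,\varepsilon}$, $\zeta_{\gamma_+,\varepsilon}$ remains a valid sub/supersolution pair at $\mu=k^2/4$, because the crucial coefficient $\varepsilon(2\gamma_++k+\varepsilon)$ degenerates to $\varepsilon^2>0$ (rather than vanishing). Consequently neither a refined barrier with a logarithm nor an approximation $\mu_n\nearrow k^2/4$ is needed; the plain $\gamma_+$-barriers already close the critical case. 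The paper's proof indeed runs the same comparison for all $\mu\le k^2/4$, taking $\tilde\phi=e^{Md}d\tilde d_K^{\gamma_+}+d\tilde d_K^{\gamma_++\varepsilon}$ and $\tilde\zeta=e^{-Md}d\tilde d_K^{\gamma_+}-d\tilde d_K^{\gamma_++\varepsilon}$.

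Second, your lower bound appeals to ``a comparison argument along the lines of Lemma \ref{comparison}'', but that lemma requires $\lambda_\mu>0$, which the proposition does not assume; indeed $L_\mu-\lambda_\mu$ has zero bottom of the spectrum on $\Omega$, so a naive maximum principle on $\Omega$ fails. The paper sidesteps this by introducing an auxiliary constant $\Lambda>-\lambda_\mu$, using a supersolution of $L_\mu+\Lambda$, and then invoking the variational characterization \eqref{eigenvalueap2} of $\lambda_\mu$ on the test function $g=(\phi-v_\mu)_+$, which forces $(\lambda_\mu+\Lambda)\int g^2\zeta_\beta^2\le 0$. This works for every sign of $\lambda_\mu$ and is the key maneuver you are missing. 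Similarly, for the upper bound the paper tests $g=(v_\mu-\zeta)_+$ and, to make the resulting inequality strict, uses the improved logarithmic Hardy inequality \eqref{logfall2} together with \eqref{a7}--\eqref{200} (valid for all $\mu\le k^2/4$, not only the critical one) to show that on a sufficiently small collar $K_{\beta_0}$ the quadratic form strictly dominates $\lambda_\mu\int g^2\zeta_\beta^2$.

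Third, your Moser-iteration route to the upper bound would, as stated, be problematic: the equation for $v_\mu$ is $-\div(\zeta_\beta^2\nabla v_\mu)=(\zeta_\beta\Delta\zeta_\beta+\mu\zeta_\beta^2/d_K^2+\lambda_\mu\zeta_\beta^2)v_\mu$, and by \eqref{a7} the zeroth-order coefficient is only bounded by $Cd\tilde d_K^{2\gamma_+}\asymp\zeta_\beta^2/d$, a singular potential not covered by the pure drift-free Moser machinery of Section~4 (which is developed for $v_t-\phi_\mu^{-2}\div(\phi_\mu^2\nabla v)=0$ with no potential). You would need a form-bound like \eqref{200} to absorb it into the gradient term, and at that point one is essentially reconstructing the paper's comparison argument, so the comparison route is both cleaner and what the paper actually does.
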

\begin{proof}
First we note that $\xz_\xb\asymp d\tilde d_K^{\xg_+}$.
Furthermore $(1-\eta_{\xb,K})\xf_\xm\in H_0^1(\xO)$ for small
$\xb>0$. Hence by standard elliptic theory, we have that for any $r>0$ there exists $C=C(r,\xO,K,\xm)$ such that
\[
\ei\asymp C d\quad \text{in}\; \xO\setminus K_r,
\]
which implies
\[
v_\xm\asymp C\quad\text{in}\;\;\xO\setminus K_r.
\]
We will show that $v_\xm\geq c$ in $\xO$.
Let $\xL>-\xl_\xm$.
For any $\xe\in(0,1)$, there exists $\xb_0<\frac{\xb}{4}$ such that
the function
\bal
\tilde \xf= e^{M d}d\tilde d_{K}^{\xg_+}+d\tilde d_{K}^{\xg_++\xe}\asymp d\tilde d_{K}^{\xg_+}\quad\text{ in}\; K_{\xb_0}\cap\xO
\eal
satisfies
\begin{equation}
L_\xm\tilde\xf +\Lambda\tilde\xf\leq0 \, ,
\qquad \mbox{ in }K_{\xb_0}\cap\xO .
\label{eq:dimi}
\end{equation}
Set $\xf=C\xz_\xb^{-1}\tilde\xf =C(e^{Md} +\tilde{d}_K^{\epsilon})$,
where $C>0$ is a constant such that
$\xf\leq \frac{1}{2}v_\xm$ in $\partial K_{\xb_0}\cap\xO$.
By \eqref{eq:dimi} and because $v_{\mu}$ satisfies the Euler equation
for \eqref{eigenvalueap2}, we have
\bal
-\div(\xz^2_\xb\nabla(\xf-v_\xm))-(\xf-v_\xm)(\xz_\xb\xD\xz_\xb+\xm\frac{\xz^2_\xb}{d_K^2})+\xL\xz^2_\xb(\xf-v_\xm)\leq 0,
\quad\text{in}\;\;K_{\xb_0}\cap\xO .
\eal
By Theorem \ref{density}, we may take $g=(\xf-v_\xm)_+$  as test function  in the above inequality. Therefore,
\ba\label{202}
\int_{K_{\xb_0}\cap\xO}\xz^2_\xb|\nabla g|^2 dx
-\int_{K_{\xb_0}\cap\xO}g^2(\xz_\xb\xD\xz_\xb+\xm\frac{\xz^2_\xb}{d_K^2})dx+\xL \int_{K_{\xb_0}\cap\xO}g^2\xz_\xb^2 dx\leq0,
\ea
But, by \eqref{eigenvalueap2} we have
\bal
\int_{K_{\xb_0}\cap\xO}\xz^2_\xb|\nabla g|^2 dx
-\int_{K_{\xb_0}\cap\xO}g^2(\xz_\xb\xD\xz_\xb
+\xm\frac{\xz^2_\xb}{d_K^2})dx\geq\xl_\xm
\int_{K_{\xb_0}\cap\xO}g^2\xz_\xb^2 dx .
\eal
This, together with \eqref{202}, implies $g=0$ since $\xL>-\xl_\xm$.
Hence $v_\xm\geq c$ in $\xO$.

Next we will similarly prove that $v_\xm\leq c$ in $\xO$.
As in the proof of Lemma \ref{subsup}, for $\xe\in(0,1)$
there exists $\xb_0<\frac{\xb}{4}$ such that the function
\bal
\tilde \xz= e^{-M d}d\tilde d_{K}^{\xg_+}-d\tilde d_{K}^{\xg_++\xe}\asymp d\tilde d_{K}^{\xg_+}\quad\text{ in}\; K_{\xb_0}\cap\xO
\eal
satisfies $L_\xm\tilde\xz-\xl_\xm\tilde\xz\geq0$ in $K_{\xb_0}\cap\xO.$
Set $\xz=C\xz_\xb^{-1}\tilde\xz,$  where $C>0$ is a constant such that
\bal
\xz\geq 2v_\xm \,  , \qquad\text{in}\;\;\partial K_{\xb_0}\cap\xO.
\eal
This time we have
\bal
-\div\big(\xz^2_\xb\nabla(v_\xm-\xz)\big)
-(v_\xm-\xz)(\xz_\xb\xD\xz_\xb+\xm\frac{\xz^2_\xb}{d_K^2})
\leq\xl_\xm\xz^2_\xb(v_\xm-\xz) , \qquad\text{in}\;\;K_{\xb_0}\cap\xO .
\eal
Hence, we may take $g=(v_\xm-\xz)_+$  as test function  in the above inequality. Therefore,
\bal
\int_{K_{\xb_0}\cap\xO}\xz^2_\xb|\nabla g|^2 dx+\int_{K_{\xb_0}\cap\xO}g^2(\xz_\xb\xD\xz_\xb+\xm\frac{\xz^2_\xb}{d_K^2})dx\leq\xl_\xm \int_{K_{\xb_0}\cap\xO}g^2\xz_\xb^2 dx.
\eal
By \eqref{logfall2}, \eqref{a7}, \eqref{200}
and the above inequality we obtain
\bal
C \int_{K_{\xb_0}\cap\xO}\frac{d^2 g^2}{\tilde d_K^{2-2\xg_+}|\ln \tilde d_K|^2}dx\leq \xl_\xm \int_{K_{\xb_0}\cap\xO}g^2\xz_\xb^2 dx,
\eal
which implies that $g=0,$ provided $\xb_0$ is small enough. Hence, $v_\xm\leq  c$ in $\xO$ and the result follows.
\end{proof}

\medskip
\noindent \textbf{Acknowledgement.} K. T. Gkikas acknowledges support by the Hellenic Foundation for Research and Innovation
(H.F.R.I.) under the “2nd Call for H.F.R.I. Research Projects to support Post-Doctoral Researchers” (Project
Number: 59).

\end{document}